\documentclass{article}

\usepackage{amsmath}
\usepackage{amssymb}
\usepackage{amsthm}
\usepackage{nccmath}					% \useshortskip before align
\usepackage{mathabx}
\usepackage{mathrsfs}
\usepackage{stmaryrd}
\usepackage{wasysym}
\usepackage{bbm}		% blackboard 1
\usepackage{accents}
\usepackage{scalerel}
\usepackage{array}
\newcolumntype{C}[1]{>{\hfil$}p{#1}<{$\hfil}}      % Centered array column with given width
\usepackage{mathtools}
\mathtoolsset{showonlyrefs=true}		% Show equation numbers only if referred
\usepackage[inline]{enumitem}
\usepackage{ifthen}
\usepackage{braket}						% defines \middle similar to \left and \right
\usepackage[normalem]{ulem}			% uline, uuline, uwave, xout, ...
\usepackage{xfrac}						% nice small slanted fractions
\usepackage{xcolor} 
\usepackage[a4paper, left=2.7cm, right=2.7cm, top=3.5cm, bottom=2cm]{geometry}
\usepackage[pdftex=true,hyperindex=true,pdfborder={0 0 0},colorlinks=true,allcolors=blue]{hyperref}
\usepackage{doi}
\usepackage[numbers]{natbib}
\newcommand{\ZZ}{\mathbb{Z}}			% The set of integers
\newcommand{\NN}{\mathbb{N}}			% The set of natural numbers
\newcommand{\RR}{\mathbb{R}}			% The set of reals
\newcommand{\GG}{\mathbb{G}}			% A group

\newcommand{\isdef}{\coloneqq}						% "Is Defined" symbol
\DeclarePairedDelimiter\abs{\lvert}{\rvert}			% Absolute value (using mathtools)
\DeclarePairedDelimiter\norm{\lVert}{\rVert}		% Norm (using mathtools)
\DeclarePairedDelimiter\floor{\lfloor}{\rfloor}	% Floor (using mathtools)
\DeclarePairedDelimiter\zinterval{\llbracket}{\rrbracket}		% Integer interval (using mathtools)

\newcommand{\dd}{\mathrm{d}}			% Differential
\newcommand{\ee}{\mathrm{e}}			% Euler's constant
\newcommand{\PP}{\mathbb{P}}
\newcommand\given[1][]{\mathop{#1\vert}}						% Conditional probability
\newcommand\relto[1][]{\,\mathop{#1\|}\,}						% Separator in D(...) (KL-divergence)
\newcommand\markovto[1][]{\xrightarrow{#1}}						% Markov transition (with prescribed kernel)
\newcommand{\indicator}[1]{\mathbbm{1}_{#1}}					% Indicator function
\newcommand{\TV}{\mathsf{TV}}										% Total variation
\newcommand{\mix}{\mathsf{mix}}									% Mixing time
\newcommand{\smallest}{\mathsf{min}}								% Smallest element
\newcommand{\distributionname}[1]{%					% Name of a distribution
    \ensuremath{\mathsf{#1}}%
}
\newcommand{\dPois}{\distributionname{Poisson}}		% Poisson distribution

\newcommand{\symb}[1]{\mathtt{#1}}								% A symbol

\newcommand{\vect}[1]{\underline{#1}}									% A vector (or finite sequence)

\newcommand{\pspace}[1]{\mathcal{#1}}	% Probability space, configuration space, subshifts, ...
\newcommand{\family}[1]{\mathscr{#1}}	% Family of of measures, ...
\newcommand{\field}[1]{\mathfrak{#1}}	% sigma-fields, ...

\newtheorem{theorem}{Theorem} %[section]
\newtheorem{corollary}{Corollary}
\newtheorem{question}{Question}

\newtheorem{lemma}{Lemma}[section]
\newtheorem{proposition}[lemma]{Proposition}
\newtheorem{observation}[lemma]{Observation}
\theoremstyle{definition}
\newtheorem{definition}[lemma]{Definition}
\newtheorem{remark}[lemma]{Remark}
\newtheorem{example}[lemma]{Example}
\AtBeginEnvironment{example}{%
  \pushQED{\qed}%
}
\AtEndEnvironment{example}{\popQED\endexample}
\AtBeginEnvironment{remark}{%
  \pushQED{\qed}%
}
\AtEndEnvironment{remark}{\popQED\endremark}
\AtBeginEnvironment{definition}{%
  \pushQED{\qed}%
}
\AtEndEnvironment{definition}{\popQED\enddefinition}

\usepackage{acro}
\DeclareAcronym{iid}{%
    short=i.i.d\acdot,
    long=independent and identically distributed,
    first-style=short
}
\DeclareAcronym{ie}{%
    short=i.e\acdot,
    long=that is,
    first-style=short
}
\DeclareAcronym{eg}{%
    short=e.g\acdot,
    long=for example,
    first-style=short
}
\DeclareAcronym{aka}{%
    short=a.k.a\acdot,
    long=also known as,
    first-style=short
}

\allowdisplaybreaks

\title{%
	Positive-rate PCA and IPS with stationary Bernoulli measures are rapidly forgetful
	\footnotetext{Last update:~\today}
}

\author{%
	Irène Marcovici
	\and
	Siamak Taati
}

\newcommand{\Addresses}{{
    \footnotesize
    \begin{samepage}
    	\noindent I.~Marcovici\\
    	\textsc{Université de Rouen Normandie, LMRS, 76801 Saint-Étienne-du-Rouvray, France.}\\
    	\indent\emph{E-mail address}: \texttt{\href{mailto:irene.marcovici@univ-rouen.fr}{irene.marcovici@univ-rouen.fr}}
    \end{samepage}
		
	\medskip
	
	\begin{samepage}
		\noindent S.~Taati\\
		\textsc{Department of Mathematics, American University of Beirut, Beirut, Lebanon.}\\
		\textsc{Center for Advanced Mathematical Sciences, American University of Beirut, Beirut, Lebanon.}\\
		\indent\emph{E-mail address}: \texttt{\href{mailto:siamak.taati@aub.edu.lb}{siamak.taati@aub.edu.lb}}
	\end{samepage}
	
	\medskip
}}

\date{}

\begin{document}

\maketitle

\begin{abstract}
	We prove that every probabilistic cellular automaton with strictly positive transition probabilities that admits a stationary Bernoulli measure is exponentially ergodic.  Moreover, the mixing time of any finite region in such a system is logarithmic in the diameter of the region.
	A similar result holds in continuous time for positive-rate, finite-range interacting particle systems.
    The proofs use entropy, and rely on a representation of the system as a perturbation of another system with noise.  The ergodic behaviour results from a competition between the accumulation of randomness due to noise and the diffusion of randomness due to local information exchange.
    %The proofs are based on entropy contraction and a bootstrap argument.
    %Entropy contraction is established through representing the system as a perturbation of another system with noise.
	%While the interacting particle systems that have stationary Bernoulli measures are fairly limited, we will illustrate that the probabilistic cellular automata that admit stationary Bernoulli measures form a rich class of models.
    We show that, in two and higher dimensions, the positive-rate probabilistic cellular automata that admit stationary Bernoulli measures are algorithmically indistinguishable from those that do not.
	
	\medskip
		
	\noindent
	\emph{Keywords:} probabilistic cellular automata, interacting particle systems, Bernoulli measures, ergodicity, mixing time, entropy.
	
	\smallskip
	
	\noindent
	\emph{MSC2020:} \textit{Primary:}
		60K35, %Interacting random processes; statistical mechanics type models; percolation theory
		82C22, %Interacting particle systems in time-dependent statistical mechanics
	    \textit{Secondary:}
		%60J10, %Markov chains (discrete-time Markov processes on discrete state spaces)
		%60J27, %Continuous-time Markov processes on discrete state spaces 
		%37A50, %Dynamical systems and their relations with probability theory and stochastic processes
		82C20, %Dynamic lattice systems (kinetic Ising, etc.) and systems on graphs in time-dependent statistical mechanics
		% 37B15 %Dynamical aspects of cellular automata
        68Q80. %Cellular automata (computational aspects) {For cellular automata as dynamical systems, see 37B15}

	%Temporary ********************************
	\vspace{-3em}
	\renewcommand{\contentsname}{}
	{\footnotesize\tableofcontents}
	%******************************************
\end{abstract}

\section{Introduction}
\label{sec:intro}

A \emph{probabilistic cellular automaton} (PCA) is a discrete-time Markov process on configurations of symbols on a lattice in which the symbols are updated synchronously, at random, with probabilities prescribed by a local transition rule.  More specifically, the \emph{configurations} of the model are elements of~$\Sigma^{\ZZ^d}$, where $\Sigma$ is a finite alphabet and $d\in\{1,2,\ldots\}$ is the dimension of the lattice.  The \emph{local transition rule} is a stochastic matrix $\varphi\colon\Sigma^N\times\Sigma\to[0,1]$, where $N\subseteq\ZZ^d$ is a finite set called the dependence \emph{neighbourhood} of~$\varphi$.  From a configuration~$x\in\Sigma^{\ZZ^d}$, the system transitions into a new configuration in which the symbol at each site $k$ is drawn, at random, according to distribution~$\varphi\big((x_{k+i})_{i\in N},\cdot\big)$, independently of the other sites and of all the past transitions.

We say that a PCA is \emph{ergodic} if it admits a unique stationary measure $\pi$ and the distribution of the system converges weakly to~$\pi$, irrespective of its initial distribution.  Denoting the global transition kernel of the PCA by $\Phi$, stationarity of $\pi$ is described as $\pi\Phi=\pi$, and ergodicity means that, for every initial measure $\mu$, $\mu\Phi^n\to\pi$ as $n\to\infty$, in that the marginal $(\mu\Phi^n)_J$ of $\mu\Phi^n$ on any finite region~$J\subseteq\ZZ^d$ converges to the corresponding marginal of~$\pi$.  A \emph{Bernoulli measure} on~$\Sigma^{\ZZ^d}$ refers to a product measure $\lambda=\bigotimes_{k\in\ZZ^d}p$ with the same marginal~$p$ at every site of the lattice.  The total variation distance between two probability distributions $p$ and $q$ on a finite set is denoted by~$\norm{p-q}_\TV$.

Our first result states that, every PCA that has strictly positive transition probabilities and a stationary Bernoulli measure is ergodic with an exponentially fast convergence.
\begin{theorem}[Exponential ergodicity of positive-rate PCA with stationary Bernoulli measures]
\label{thm:main:PCA:ergodicity}
	Let $\Phi$ be a $d$-dimensional PCA with a strictly positive local transition rule $\varphi\colon\Sigma^N\times\Sigma\to(0,1)$, and suppose that $\Phi$ admits a stationary Bernoulli measure $\lambda$.
	Then, there exist constants $\alpha,\beta>0$ %,a,b>0$
	such that
	\begin{equation}
		\norm[\big]{(\mu\Phi^t)_J - \lambda_J}_\TV \leq \alpha\ee^{-\beta t} n^{d/2}
	\end{equation}
	for every initial measure $\mu$, every finite region $J\subseteq\ZZ^d$ with diameter~$n$, %and every $t\geq a\log n + b$.
	and every $t\geq 0$.
\end{theorem}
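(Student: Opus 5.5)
We follow the strategy announced in the abstract: control the relative entropy of the finite-region marginals with respect to $\lambda$, and show that it contracts in time. Write $H_J(\nu)\isdef D(\nu_J\relto\lambda_J)$. Since $\lambda$ is a product measure, Pinsker's inequality gives $\norm{\nu_J-\lambda_J}_\TV\leq\sqrt{H_J(\nu)/2}$. It therefore suffices to prove
\[
H_J(\mu\Phi^t)\leq C\,\ee^{-2\beta t}\abs{J}
\]
for every $J$ of diameter $n$, since $\abs{J}\leq c\,n^d$ and taking a square root yields the factor $n^{d/2}$. As a starting point, every marginal satisfies the trivial bound $H_J(\nu)\leq\abs{J}\log(1/\min p)$. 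This bound is uniform in $\mu$, and it is already linear in $\abs{J}$. The task is thus to gain an exponential factor in $t$ while keeping the linear dependence on the volume.

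\textbf{Step 1: noise decomposition.} Since $\varphi>0$, pick $\varepsilon>0$ with $\varphi(u,a)\geq\varepsilon p(a)$ for all $u,a$. Then write
\[
\varphi=\varepsilon\,p+(1-\varepsilon)\psi ,
\]
where $\psi$ is a stochastic local rule. The PCA can then be realized as follows: at each step, each site independently is "refreshed" with probability $\varepsilon$ and receives a fresh $p$-distributed symbol, and otherwise it is updated according to $\psi$. Stationarity of $\lambda$ for $\Phi$ translates into a relation for $\psi$: $\lambda\Psi$ has the same $N$-local statistics as $\lambda$. More precisely, stationarity means that for every finite $K$, $\varepsilon$-mixtures of $\lambda$ with $\lambda\Psi$ reproduce $\lambda_K$. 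We use this to show that the deterministic-ish part $\psi$ does not create entropy relative to $\lambda$ on average, i.e. that it is a relative-entropy nonexpansion in a suitable localized sense.

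\textbf{Step 2: one-step local contraction.} The goal is an inequality of the form
\[
H_J(\nu\Phi)\leq(1-\varepsilon')\,H_{J+N'}(\nu)
\]
for some constant $\varepsilon'>0$ and a suitable finite set $N'$ (the neighbourhood reflected, $J+N'=\{j-i\}$). Conditionally on the set $R$ of refreshed sites, the sites in $R$ are exactly $\lambda$-distributed and independent of everything else. By the chain rule, the entropy of $J$ is therefore carried only by $J\setminus R$, whose image comes from the $\psi$-update of $\nu$ on $(J\setminus R)-N$. Averaging over $R$ by convexity should remove a fraction of order $\varepsilon$ of the entropy.

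The main obstacle lies here. The lemma must use stationarity of $\lambda$ to show that the $\psi$-update maps relative entropy on $J\setminus R$ to at most relative entropy on its preimage neighbourhood. Without stationarity of a product measure this fails, since the relative entropy could grow. I would first attempt the contraction via the data-processing inequality applied to the kernel from $\Sigma^{J+N'}$ to $\Sigma^J$, with reference measure $\lambda$, which is invariant for this kernel because $\lambda\Phi=\lambda$ and $\lambda$ is a product measure. Data processing then gives the inequality with constant $1$. The strict gain must come from the refreshes, via a strong-data-processing argument: the refresh channel $x\mapsto$ "$p$ with prob.\ $\varepsilon$" has contraction coefficient $\leq1-\varepsilon$ sitewise. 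Tensorization of this coefficient across sites (for product reference measures) should give the factor $(1-\varepsilon)$ for the whole region.

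\textbf{Step 3: iteration and competition.} Iterating Step 2 gives
\[
H_J(\mu\Phi^t)\leq(1-\varepsilon')^t H_{J+tN'}(\mu)\leq(1-\varepsilon')^t\,C\,(n+rt)^d .
\]
Here the exponential decay from noise competes with the polynomial growth of the region caused by information exchange. Since $(n+rt)^d\leq C' n^d(1+t)^d$ and the polynomial factor $(1+t)^d$ is absorbed by reducing the exponential rate slightly, this gives $H_J\leq C''\ee^{-2\beta t}n^d$. Combined with Pinsker's inequality, this proves the theorem with the stated $n^{d/2}$ dependence.
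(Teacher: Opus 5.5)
Your overall route is the paper's: split off a memoryless $p$-refresh, apply weak data processing to the remaining part and tensorized strong data processing to the refresh, iterate, let $(1-\varepsilon)^t$ beat the polynomial growth of the light cone, and finish with Pinsker. However, the step you yourself call the main obstacle is never actually resolved, so there is a genuine gap.

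Data processing for the kernel of $\Phi$ (using $\lambda\Phi=\lambda$) gives $H_J(\nu\Phi)\leq H_{J+N}(\nu)$ with constant $1$. But that kernel already contains the refreshes, so no strong data processing step can be stacked on top of it afterwards. To extract the factor $1-\varepsilon$ you must factor $\Phi=\Psi\Theta$, where $\Theta$ is the product noise with single-site matrix $\theta=(1-\varepsilon)I+\varepsilon Q$ (every row of $Q$ equal to $p$). You then apply weak data processing to $\Psi$, not to $\Phi$. Your conditioning-on-$R$ argument leads to the same place. It bounds $H_J(\nu\Phi)$ by an average of $H_{J\setminus R}(\nu\Psi)$, hence by $(1-\varepsilon)H_J(\nu\Psi)$, and you then still need $H_J(\nu\Psi)\leq H_{J+N}(\nu)$. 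Either way this requires $(\lambda\Psi)_J=\lambda_J$ for every finite $J$, i.e.\ $\lambda\Psi=\lambda$ exactly. You never prove this. Step~1 only offers ``does not create entropy on average'' and ``same $N$-local statistics'', which are not established and, as stated, are weaker than what is needed. Asserting that $\lambda$ is invariant for the $\psi$-kernel ``because $\lambda\Phi=\lambda$ and $\lambda$ is a product measure'' skips exactly the nontrivial point.

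The missing idea is that the noise acts injectively on measures when $\varepsilon<1$, so you must also choose $\varepsilon<1$.
\begin{itemize}
\item Since $Q^2=Q$, the matrix $\theta$ is invertible, with $\theta^{-1}=\frac{1}{1-\varepsilon}(I-\varepsilon Q)$.
\item Tensor powers of $\theta$ are therefore invertible. Since $(\mu\Theta)_K=\mu_K\theta^{\otimes K}$, the map $\mu\mapsto\mu\Theta$ is one-to-one on every finite marginal, hence on measures.
\item Then $\lambda\Psi\Theta=\lambda\Phi=\lambda=\lambda\Theta$ forces $\lambda\Psi=\lambda$.
\end{itemize}
Equivalently, start from your own mixture identity $\lambda_K=\sum_{R\subseteq K}\varepsilon^{\abs{R}}(1-\varepsilon)^{\abs{K\setminus R}}\,\lambda_R\otimes(\lambda\Psi)_{K\setminus R}$. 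Induction on $\abs{K}$ turns every term with $R\neq\emptyset$ into $\lambda_K$ and leaves $(\lambda\Psi)_K=\lambda_K$.

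With this in hand, $H_J(\nu\Phi)=H_J(\nu\Psi\Theta)\leq(1-\varepsilon)H_J(\nu\Psi)\leq(1-\varepsilon)H_{J+N}(\nu)$, and your Step~3 goes through unchanged.

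Two minor remarks. Under the paper's convention site $k$ reads $x_{k+N}$, so the relevant set is $J+N$, not $J-N$. Also, Pinsker's inequality does not need $\lambda$ to be a product measure.
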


% A simple proof of the above theorem is given in~Section~\ref{sec:PCA:ergodicity}.  The constants $\alpha$ and $\beta$ can be explicitly computed from~$\varphi$ and~$q$ (see the proof).

Our second result is a continuous-time variant of the above theorem for (finite-range) interacting particle systems.  In an \emph{interacting particle system} (IPS), the updates at different sites occur asynchronously, triggered by independent Poisson clocks.  Namely, let $\varphi\colon\Sigma^N\times\Sigma\to[0,1]$ be a local transition rule and $(\xi_k)_{k\in\ZZ^d}$ be a family of independent Poisson processes (\emph{clocks}) with the same rate~$c>0$, one attached to each site.  (We can always choose the unit of time such that $c=1$.)  Every time the clock $\xi_k$ ticks, the symbol at site~$k$ is updated, at random, according to the distribution prescribed by~$\varphi$.  The notions of invariance and ergodicity are described analogously in terms of the transition semigroup~$(\Phi^t)_{t\geq 0}$ of the IPS.

\begin{theorem}[Exponential ergodicity of positive-rate IPS with stationary Bernoulli measures]
\label{thm:main:IPS:ergodicity}
	Let $\Phi$ be a $d$-dimensional IPS with clock rate~$1$ and a strictly positive local transition rule $\varphi\colon\Sigma^N\times\Sigma\to(0,1)$, and suppose that $\Phi$ admits a stationary Bernoulli measure $\lambda$.
	Then, there exist constants $\alpha,\beta>0$ %,a,b>0$
	such that
	\begin{equation}
		\norm[\big]{(\mu\Phi^t)_J - \lambda_J}_\TV \leq \alpha\ee^{-\beta t} n^{d/2}
	\end{equation}
	for every initial measure $\mu$, every finite region $J\subseteq\ZZ^d$ with diameter~$n$, %and every $t\geq a\log n + b$.
	and every $t\geq 0$.
\end{theorem}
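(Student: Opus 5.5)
We follow the strategy announced in the abstract: relative entropy with respect to the Bernoulli measure~$\lambda$ plus a decomposition of the dynamics into noise and a noiseless part. Since $\varphi$ is strictly positive and $\lambda=\bigotimes p$ is stationary, we first extract a noise component. Choose $\varepsilon>0$ small enough that $\varphi(u,\cdot)-\varepsilon p(\cdot)\geq 0$ for all $u\in\Sigma^N$. This is possible because $p$ has full support: stationarity of $\lambda$ together with $\varphi>0$ forces every symbol to have positive $\lambda$-probability. Writing $\varphi=\varepsilon p+(1-\varepsilon)\psi$, each update at a site independently either resamples the symbol from~$p$ (with probability~$\varepsilon$) or applies the kernel~$\psi$.

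The key quantity is the local relative entropy $H_J(\mu)\isdef D\big(\mu_J\relto\lambda_J\big)$. By Pinsker's inequality, $\norm{\mu_J-\lambda_J}_\TV\leq\sqrt{H_J(\mu)/2}$. Since $H_J(\mu)\leq\abs{J}\log(1/\min p)=O(n^d)$ trivially, the factor $n^{d/2}$ in the statement is exactly what Pinsker produces from an entropy bound of the form $\ee^{-2\beta t}\abs{J}$. So the goal is to show that $H_J(\mu\Phi^t)$ decays exponentially, uniformly after normalizing by~$\abs{J}$. Concretely, we aim for a bound on the entropy density of the form
\begin{equation}
	\sup_{J}\frac{H_J(\mu\Phi^{t})}{\abs{J}}\leq C(1-\varepsilon')^{t}
\end{equation}
for some $\varepsilon'>0$. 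This yields the theorem with $\alpha=\sqrt{C/2}$ and $\beta=-\tfrac12\log(1-\varepsilon')$. In continuous time the same argument applies with the resampling occurring at rate~$\varepsilon$ per site, and the one-step bound below is replaced by a differential inequality.

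The main step is a one-step contraction. The transition into~$J$ depends only on the previous configuration on $J+N$, say $J'\supseteq J$. For the $\varepsilon$-noise, consider the random set $R\subseteq J$ of sites that are resampled. Using the chain rule for relative entropy together with the facts that $\lambda$ is $\Phi$-invariant and that $\psi$ preserves~$\lambda$ (because both $\varphi$ and the $p$-resampling do), data processing should give something like
\begin{equation}
	H_J(\mu\Phi)\leq \mathbb{E}\big[H_{(J\setminus R)+N}(\mu)\big],
\end{equation}
where $\mathbb{E}$ denotes expectation over the random set~$R$. The difficulty is that $\psi$ spreads information from $J$ to the larger region $J+N$, which competes with the loss of information at the $\varepsilon$-fraction of resampled sites.

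I expect this competition to be the main obstacle. A naive bound $H_{J+N}\leq\ldots$ grows with the boundary, and iterating it over $t$ steps enlarges the region to $J+tN$, whose size is polynomial in~$t$. One way to handle this would be to use an averaged quantity that is invariant under translation, namely the entropy density over large boxes, and to show that the density contracts by the factor~$(1-\varepsilon)$ up to boundary terms of relative size $O(\abs{\partial J}/\abs{J})$. A bootstrap on box sizes would then absorb these boundary terms. Alternatively, the estimate could go through a weighted sum $\sum_{k}\ee^{-\gamma\abs{k}}h_k$ of conditional entropies, which contracts for small~$\gamma$ because the noise gives a factor~$1-\varepsilon$ while the diffusion costs only $\ee^{\gamma\,\mathrm{diam}N}$. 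The exponential decay in~$t$ and the linear dependence on~$\abs{J}$ would then follow by iterating this estimate and applying Pinsker's inequality.
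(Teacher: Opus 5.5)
Your noise decomposition (including the check that $p$ has full support) and the closing Pinsker step are fine. In discrete time the competition you worry about is harmless: applying $\Psi$ first and the noise second gives $D_J(\mu\Phi\relto\lambda)\le(1-\kappa)D_{N(J)}(\mu\relto\lambda)$, and $(1-\kappa)^t$ beats the polynomial growth of $\abs{N^t(J)}$. That is the paper's PCA proof. The genuine gap is the continuous-time step, which is the whole content of this statement and which you dispose of with ``the one-step bound is replaced by a differential inequality''. In an IPS the set of initial sites influencing $J$ by time $t$ is random and unbounded, so there is no one-step light cone. The inequality the direct argument actually produces (write $Lf=\abs{J}(\widehat{\Phi}_Jf-f)$, use asynchronous strong data processing for the noise and weak data processing for the $\psi$-updates) is $\dot D_J\le-\kappa D_J+C\abs{\partial^-N(J)}$. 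Its boundary term is \emph{additive}, so it only yields $D_J(t)\le D_J(0)\ee^{-\kappa t}+C'\abs{\partial^-N(J)}$, which does not tend to $0$.

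Your first remedy correctly anticipates this decay up to a boundary term, but a bound on $D_B/\abs{B}$ for large boxes says nothing about a fixed region $A$ when $\mu$ is not shift-invariant. Superadditivity only bounds the \emph{average} of $D_{A_i}$ over disjoint translates $A_i\subseteq B$, and different translates see different parts of $\mu$. The paper closes this with a bootstrap lemma, which your proposal lacks:
\begin{enumerate*}[label=(\roman*)]
\item start from an auxiliary configuration carrying independent copies of $\mu_{N_{\varepsilon,t}(A)}$ on $m^d$ disjoint translates inside $B$, which is legitimate because the box bound is uniform in the initial measure;
\item bound the random influence region by a large-deviation estimate for the backward Poisson growth process, so each copy is reached from outside its block with probability at most $\varepsilon$;
\item pay $H(\varepsilon)+\varepsilon\abs{A}\log(1/q_\smallest)$ per copy for conditioning on these leakage indicators, and take $m=\ee^{\kappa t}$, $\varepsilon=\ee^{-\kappa t}$.
\end{enumerate*}

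Your second remedy can be made to work and would bypass all of this, but not with the additive term. With it, $F=\sum_j\ee^{-\gamma j}D_{J_j}$ only satisfies $\dot F\le-\kappa F+O(n^{d-1})$. You would need the entropy-proportional inequality
\[
	\dot D_J\le-\kappa D_J+(1-\kappa)\abs{N}\big(D_{J\cup N(J)}-D_J\big).
\]
It follows by using joint convexity, so that only $(\lambda\widehat{\Psi}_J)_J=\lambda_J$ is needed rather than $\lambda\widehat{\Psi}_k=\lambda$. You then bound $\sum_{k\in\partial^-N(J)}(D_{J\cup N(k)}-D_J)$ via the chain rule over $N(J)\setminus J$ and monotonicity of mutual information, since each outside site lies in at most $\abs{N}$ of the sets $N(k)$. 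For nested boxes with $J_j\cup N(J_j)\subseteq J_{j+1}$, this gives $\dot F\le-\big(\kappa-(1-\kappa)\abs{N}(\ee^{\gamma}-1)\big)F$ with $F(0)=O(n^d)$. As written, however, your proposal contains neither this sharper inequality nor the bootstrap, so the key step is missing.
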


Examples of models to which Theorems~\ref{thm:main:PCA:ergodicity} and~\ref{thm:main:IPS:ergodicity} apply are presented in Section~\ref{sec:characterizations} below.
An immediate consequence of these theorems is that the mixing time of any finite region in a positive-rate PCA or IPS admitting a stationary Bernoulli measure is logarithmic in the diameter of the region.
The \emph{mixing time} of a finite region $J\subseteq\ZZ^d$ with \emph{error margin} $\varepsilon>0$ is
\begin{equation}
	t_\mix(J,\varepsilon) \isdef \inf\big\{t\geq 0: \text{$\norm[\big]{(\mu\Phi^s)_J-\lambda_J}_\TV<\varepsilon$ for all $\mu$ and $s\geq t$}\big\} \;.
\end{equation}

\begin{corollary}[Mixing times of positive-rate PCA/IPS with stationary Bernoulli measures]
\label{cor:main:mixing-time}
	Let $\Phi$ be a $d$-dimensional PCA or IPS with a strictly positive local transition rule $\varphi\colon\Sigma^N\times\Sigma\to(0,1)$, and suppose that $\Phi$ admits a stationary Bernoulli measure $\lambda$.
	Then,
%	there exist constants $a',b'>0$ such that $t_\mix(J,\varepsilon)\leq a'\log n + b'$ for every finite region $J\subseteq\ZZ^d$ with diameter~$n$.
%	More specifically,
	\begin{equation}
		t_\mix(J,\varepsilon) \leq
%			\max\left\{
%			\frac{d-1}{2\beta}\log n + \frac{\log\alpha/\varepsilon}{\log\beta}
%			,
%			a\log n + b
%			\right\}\;.
			\frac{d}{2\beta}\log n + \frac{\log\alpha - \log\varepsilon}{\beta}
	\end{equation}
    for every $J\subseteq\ZZ^d$ with diameter~$n$,
	where $\alpha$ and $\beta$ are the constants in Theorems~\ref{thm:main:PCA:ergodicity} or~\ref{thm:main:IPS:ergodicity}, respectively.
\end{corollary}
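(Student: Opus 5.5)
The corollary follows directly from Theorems~\ref{thm:main:PCA:ergodicity} and~\ref{thm:main:IPS:ergodicity}. Fix a finite region $J\subseteq\ZZ^d$ of diameter $n$ and an error margin $\varepsilon>0$, and let $\alpha,\beta>0$ be the constants from the relevant theorem. These constants do not depend on $\mu$, $J$ or $t$. The theorem then gives
\begin{equation}
	\norm[\big]{(\mu\Phi^s)_J-\lambda_J}_\TV \leq \alpha\ee^{-\beta s} n^{d/2}
\end{equation}
simultaneously for all initial measures $\mu$ and all times $s\geq 0$. The right-hand side does not depend on $\mu$ and is non-increasing in $s$. It therefore suffices to find a threshold $t_0$ with $\alpha\ee^{-\beta t_0}n^{d/2}<\varepsilon$. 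Every $s\geq t_0$ then satisfies the same bound for every $\mu$, so $t_\mix(J,\varepsilon)\leq t_0$.

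Solving for the threshold, $\alpha\ee^{-\beta t}n^{d/2}\leq\varepsilon$ is equivalent to
\begin{equation}
	t\geq \frac{d}{2\beta}\log n + \frac{\log\alpha-\log\varepsilon}{\beta} \eqqcolon t^\ast .
\end{equation}
For every $t>t^\ast$ the inequality is strict, so every such $t$ lies in the set in the definition of $t_\mix(J,\varepsilon)$. Taking the infimum over $t>t^\ast$ yields $t_\mix(J,\varepsilon)\leq t^\ast$. This also covers the case $t^\ast<0$, because $t_\mix\geq 0$ by definition and every $t\geq 0>t^\ast$ already works. The case $n=0$, where $J$ is a single site, is handled by the usual convention, or by noting that the bound is vacuous when $\log n=-\infty$ is interpreted appropriately. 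One may instead replace $n$ by $\max\{n,1\}$, which the theorems allow since the bound only weakens.

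There is no real obstacle. The only points needing care are the strict inequality in the definition of $t_\mix$, which the infimum handles, and the requirement that the bound hold uniformly in $\mu$ and for all $s\geq t$ rather than only at $t$. The uniformity in $\mu$ and the monotonicity in $s$ of the right-hand side are exactly what Theorems~\ref{thm:main:PCA:ergodicity} and~\ref{thm:main:IPS:ergodicity} provide. The PCA and IPS cases are identical, differing only in which theorem supplies $\alpha$ and $\beta$.
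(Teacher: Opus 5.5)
Your main argument is the one the paper intends; it gives no separate proof and calls the corollary an immediate consequence of Theorems~\ref{thm:main:PCA:ergodicity} and~\ref{thm:main:IPS:ergodicity}. The bound $\alpha\ee^{-\beta s}n^{d/2}$ is uniform in $\mu$ and non-increasing in $s$, so you solve for the threshold and let the infimum absorb the strict inequality. Your algebra and the infimum step are correct whenever $t^\ast\geq 0$.

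One claim is wrong, however: the case $t^\ast<0$ is not covered.
\begin{itemize}
\item In that case you show that every $t\geq 0$ lies in the defining set, so $t_\mix(J,\varepsilon)=0$. Since $0>t^\ast$, the inequality $t_\mix(J,\varepsilon)\leq t^\ast$ actually fails there, so no argument can establish it.
\item This regime does occur. It happens exactly when $\varepsilon>\alpha n^{d/2}$. Examples are any $\varepsilon>1$ (total variation never exceeds $1$), or $n=1$ with $\alpha<\varepsilon\leq 1$ if the theorem's constant satisfies $\alpha<1$. Nothing in the theorems rules this out; in the PCA proof, for instance, $\alpha$ may be about $0.6$.
\item What your argument proves in general is $t_\mix(J,\varepsilon)\leq\max\{0,t^\ast\}$. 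This equals the stated bound whenever $\varepsilon\leq\alpha n^{d/2}$. Since $\alpha$ can always be enlarged, choosing $\alpha\geq 1$ makes this automatic for the only meaningful range $\varepsilon\leq 1$.
\end{itemize}

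Your remark on $n=0$ is also backwards. Reading $\log n$ as $-\infty$ would make the bound false, not vacuous. The case never arises, though. In the paper a region of diameter $n$ is placed inside a box $a+\zinterval{0,n-1}^d$, so a single site has diameter $1$ and $n\geq 1$ for every nonempty $J$.
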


The motivation to study positive-rate PCA and IPS with stationary Bernoulli measures is twofold.
The first reason is the apparent trade-off between heat dissipation~\cite{Landauer1961,Bennett1982} and noise-resilience~\cite{Neumann1956,GR1988,Gacs1986,Gacs2001,MST2019,CG2021} in physical realizations of computation.  Reversible (deterministic) CA have long been studied as mathematical models of computation in which all operations are performed reversibly so as to prevent heat dissipation~\cite{Toffoli1977,MH1989,TM1990,Morita2008,Kari2018}.  A perturbation of a reversible CA with additive noise gives rise to a PCA that admits the uniform Bernoulli measure as a stationary measure (see Example~\ref{exp:surjective-CA+noise}).  Theorem~\ref{thm:main:PCA:ergodicity} and Corollary~\ref{cor:main:mixing-time} generalize an earlier result~\cite{Taati2021} (and its precursor~\cite[Theorem~4.1]{MST2019}) stating that all such perturbations are ergodic with rapid convergence.  In particular, every finite region in such a noisy computation model forgets its initial data in a logarithmic number of steps, making the system incapable of performing anything but trivial computations at large scales.

The second reason for interest in PCA and IPS with stationary Bernoulli measures arises in the context of statistical mechanics, as a testing ground to study the more general family of PCA and IPS that admit stationary Gibbs measures. This more general family includes reversible Gibbs samplers (\ac{aka} Glauber dynamics, or stochastic Ising models)~\cite{Liggett1985,Martinelli1999} as well as many non-reversible models.  In this context, the question of convergence towards equilibrium remains open:
\begin{question}
\label{q:gibbs-stationary}
    Suppose that a positive-rate PCA or IPS admits a stationary Gibbs measure.  Does the distribution of the system starting from any initial condition necessarily converge weakly to the set of Gibbs measures with the same specification?
\end{question}
Building upon the earlier partial result of Holley and Stroock~\cite{HS1977}, Jahnel and Köppl have recently settled this question in the affirmative in the special case of reversible IPS in one and two dimensions~\cite{JK2025}. Theorems~\ref{thm:main:PCA:ergodicity} and~\ref{thm:main:IPS:ergodicity} above establish another special case, namely when the stationary Gibbs measure is in fact a Bernoulli measure, but without the reversibility assumption or any restriction on the dimension.  Earlier partial results related to Question~\ref{q:gibbs-stationary} are all limited to shift-invariant measures.
Holley and others have shown that, if a positive-rate IPS or PCA admits a reversible Gibbs measure, then all its shift-invariant stationary measures are Gibbs for the same specification, and moreover, starting from any shift-invariant measure, the system converges to the set of (shift-invariant) Gibbs measures with the same specification~\cite{Holley1971, HS1975,Sullivan1976,MP1977,Vasilyev1978,KV1980}.
The first statement holds, even without reversibility, for positive-rate IPS and PCA that admit stationary Gibbs measures~\cite{Kuensch1984,DLR2002,JK2019}.

The proofs of all the above-mentioned results rely on relative entropy (or, equivalently, free energy).  
For the dynamics of the system restricted to shift-invariant measures, the relative entropy per site serves as a (semi-continuous) Lyapunov function.
The results of Holley and Stroock~\cite{HS1977} and Jahnel and Köppl~\cite{JK2025} involve a more intricate analysis of the time derivative of the relative entropy of finite regions.
Our proof also relies on analyzing the evolution of the relative entropy of finite regions, but it has a more information-theoretic flavour as we now briefly sketch.

The starting point %in the proof of Theorems~\ref{thm:main:PCA:ergodicity} and~\ref{thm:main:IPS:ergodicity}
is to represent the model as a perturbation of another model with random (zero-range, memoryless) noise, in such a way that both components preserve the same Bernoulli measure~$\lambda$ (Sections~\ref{sec:PCA:noise-decomposition} and~\ref{sec:IPS:noise-decomposition}).  The two components have competing effects on the relative entropy of a finite region~$J\subseteq\ZZ^d$.
The noise acts as a contraction, leading to exponential decay of the relative entropy, whereas the other component has a diffusive effect, allowing relative entropy to leak into~$J$ through its boundary (Sections~\ref{sec:PCA:entropy-evolution} and~\ref{sec:IPS:entropy-almost-depletion}).
In discrete time, a simple argument shows that the noise prevails, yielding exponential convergence towards~$\lambda$ (Section~\ref{sec:PCA:entropy-evolution}).
The same holds in continuous time, but the proof requires a more elaborate bootstrapping argument (Section~\ref{sec:bootstrap:continuous-time}),  which, in a simpler setting, was introduced in an earlier proof of a special case of Theorem~\ref{thm:main:PCA:ergodicity}~\cite{Taati2021}.
For IPS, we exploit the interpretation in terms of asynchronous updating (Observation~\ref{obs:IPS:generator-in-terms-of-asynchronous} and~\ref{prop:IPS:entropy-change:in-terms-of-asynchronous}), which contrasts with the synchronous updating scheme in PCA.  We also require a concentration inequality controlling the speed of information propagation when updates are driven by Poisson clocks (Section~\ref{sec:IPS:influence-propagation}).

The use of entropy in the study of Markov processes is rooted in the original work of Boltzmann, and was first formulated in the context of Markov chains by Rényi~\cite{Renyi1961}.  In the setting of IPS and PCA, relative entropy has also been used to establish exponential convergence via logarithmic Sobolev inequalities (see \ac{eg},~\cite{GZ2003,Caputo2022}).  Another application in this context is Dawson's information-theoretic approach to problems of uniqueness and ergodicity~\cite{Dawson1973,Dawson1974,Dawson1975}.

% \comment{To write:}
% The use of entropy in the study of Markov processes is rooted in the original work of Boltzmann, and was formulated in rigorous form by Rényi~\cite{Renyi1961}.  The applications of en the context of PCA and IPS, in addition to the above, include the work of Dawson who derived information-theoretic sufficient criteria for the uniqueness of stationary measures and ergodicity~\cite{}, ... hydrodynamic limit, and the entropy proof by Ramirez and Varadhan~\cite{RV1996} of an earlier result of Mountford~\cite{Mountford1995} stating that positive-rate finite-range IPS in one dimension cannot have periodic behaviour.  Relation to log-Sobolev inequalities?

Theorems~\ref{thm:main:PCA:ergodicity} and~\ref{thm:main:IPS:ergodicity} can be generalized in several directions, but we present them in the simplest non-trivial setting to emphasize the key ideas.  First, the results extend to non-homogeneous PCA and IPS, in which the local transition rules may vary across sites and the stationary product measure need not be shift-invariant, provided that the local rules have uniformly bounded ranges and are uniformly strictly positive.  In this setting, the underlying lattice can be replaced by any locally finite graph of sub-exponential growth, at least in the case of PCA (Remark~\ref{rem:graph-with-sub-exponential-growth}).  We expect that the finite-range assumption on the local rules can also be relaxed, as long as the dependence on distant sites remains sufficiently weak.

Despite the strong analogy between Theorems~\ref{thm:main:PCA:ergodicity} and~\ref{thm:main:IPS:ergodicity} and the similarity of their proofs, the corresponding models appear to differ substantially in nature, due to the contrast between synchronous and asynchronous updating.  We show that, in two and higher dimensions, whether a given positive-rate PCA admits a stationary Bernoulli measure is algorithmically undecidable (Theorem~\ref{thm:PCA:undecidability}).  This suggests that, despite their rapid ergodicity, such models still exhibit some form of complexity, albeit in their one-step transition kernel rather than their asymptotic behaviour.  By contrast, we conjecture that, in any dimension, positive-rate IPS that admit stationary Bernoulli measures have a finitary characterization (see Section~\ref{sec:characterizations:IPS}).
In one dimension, such finitary characterizations are known to exist for both classes of models~\cite[Chapter~16]{TVS+1990}, \cite{FM2020}.

% \comment{To write:}
% Despite the strong analogy between the statements of Theorems~\ref{thm:main:PCA:ergodicity} and~\ref{thm:main:IPS:ergodicity} and the similarity of their proofs, the models they talk about are quite different in nature. ... Positive-rate IPS with stationary Bernoulli measures have a local characterization (?).  In contrast, the positive-rate PCA that admit stationary Bernoulli measures form a much richer family of systems, to the point that they cannot be algorithmically distinguished from other PCA.  See Section~\ref{...}.

The remainder of the paper is organized as follows.
Section~\ref{sec:preliminaries} introduces the setting and reviews the necessary preliminaries, in particular on IPS and relative entropy.  In Section~\ref{sec:non-interacting-MCs}, we present (fairly standard) entropy contraction inequalities (\ac{aka}, strong data processing inequalities) for finite families of non-interacting Markov chains with synchronous or asynchronous updating.  These inequalities are used in the subsequent sections to quantify the effect of noise.  Theorems~\ref{thm:main:PCA:ergodicity} and~\ref{thm:main:IPS:ergodicity} are proven in Sections~\ref{sec:PCA:ergodicity} and~\ref{sec:IPS:ergodicity}, respectively.  In Section~\ref{sec:characterizations}, we present some examples and address the problem of identifying PCA and IPS that admit stationary Bernoulli measures.

% \comment{%
% \begin{itemize}
%     \item General references: \cite{TVS+1990} and \cite{MM2014a} for PCA; \cite{Liggett1985} and ?? for IPS
%     % \item Interest in computer science : What to say?  What to cite?
%     % \item Interest in statistical physics: What to say?  What to cite?
%     % \item For PCA: invariance of Bernoulli measure appear and the question of ergodicity appear in the context of reversible computation with noisy components~\cite{MST2019}.  Theorem~\ref{thm:main:PCA:ergodicity} and Corollary~\cite{cor:main:mixing-time} generalize the earlier result about surjective CA + noise~\cite{Taati2021}.
%     % \item For PCA/IPS: The problem we consider is a special case of the more difficult problem of whether a PCA/IPS that has an stationary Gibbs measure converges to the set of Gibbs measures with the same specification.  The usual approach is via entropy.  Many references (Glauber/reversible/non-reversible dynamics?), Holley, ..., Jahnel and Köppl seem to prove convergence to the set of Gibbs measures in one and two dimensions if the IPS has a reversible Gibbs measure.
% \end{itemize}
% }

\paragraph{Acknowledgments.}
We thank Régine Marchand, Jérôme Casse, and Pierre Youssef for helpful discussions.

% \comment{Add}

% \comment{Régine Marchand? Jérôme Casse? Pierre Youssef?}

% \comment{Funding acknowledgment}

\section{Preliminaries}
\label{sec:preliminaries}

We write $A\Subset B$ to indicate $A$ is a \emph{finite} subset of~$B$.
We use $\zinterval{i,j}$ for the integer interval $[i,j]\cap\ZZ$. 

\subsection{Configurations, measures, transition kernels}
Throughout this article, $\Sigma$ stands for a finite alphabet.
The configuration space $\Sigma^{\ZZ^d}$ is given the product topology, which is compact and metrizable, and the Borel $\sigma$-algebra. %, which we denote by~$\field{F}$.

A partial configuration $w\colon J\to \Sigma$ with finite $J\Subset\ZZ^d$ is called a \emph{pattern}.  The \emph{cylinder} with base $w$ is the set
\begin{equation}
    [w] \isdef \{x\in\Sigma^{\ZZ^d}: x_J=w\}
\end{equation}
of all configurations that agree with $w$ over its domain.
The cylinders are clopen and form a basis for the product topology on~$\Sigma^{\ZZ^d}$.  Together with the empty set, they also form a semi-algebra generating the Borel $\sigma$-algebra on~$\Sigma^{\ZZ^d}$.

The translation (or \emph{shift}) of a configuration $x\in\Sigma^{\ZZ^d}$ by a vector $k\in\ZZ^d$ is denoted by $\sigma^k x$, so that $(\sigma^k x)_i\isdef x_{k+i}$ for $i\in\ZZ^d$.
%The translation (or \emph{shift}) action $\sigma$ is continuous.

The Banach space of all continuous functions $f\colon\Sigma^{\ZZ^d}\to\RR$ with the uniform norm is denoted by~$C(\Sigma^{\ZZ^d})$.  A function $f\colon\Sigma^{\ZZ^d}\to\RR$ is said to be \emph{local} if there is a finite set $J\Subset\ZZ^d$, called the \emph{base} of~$f$, such that $f(x)$ is uniquely determined by the restriction~$x_J$.  In other words, $f$ is local if it is a linear combination of indicators of cylinders.
Local functions form a countable dense linear subspace of~$C(\Sigma^{\ZZ^d})$.

The space of all probability measures on $\Sigma^{\ZZ^d}$ is denoted by~$\family{P}(\Sigma^{\ZZ^d})$.  With the weak topology, this is again compact and metrizable.  The marginal of a measure $\mu\in\family{P}(\Sigma^{\ZZ^d})$ on $J\subseteq\ZZ^d$ is denoted by $\mu_J$.  A sequence $\mu_1,\mu_2,\ldots\in\family{P}(\Sigma^{\ZZ^d})$ converges weakly to a measure $\mu\in\family{P}(\Sigma^{\ZZ^d})$ if and only if $\mu_n([w])\to\mu([w])$ for every cylinder $[w]$, which is the case, if and only if, the marginals of $\mu_n$ on every finite set $J\Subset\ZZ^d$ converge to the corresponding marginal of~$\mu$.

% Let $\Phi\colon\Sigma^{\ZZ^d}\times\field{F}\to[0,1]$ be a probability transition kernel on $\Sigma^{\ZZ^d}$.  Following the usual convention, we use the left multiplication notation $f\mapsto \Phi f$ for the operator induced by $\Phi$ on bounded measurable functions, and the right multiplication notation $\mu\mapsto\mu\Phi$ for the dual operator on probability measures, so that
% \begin{alignat}{2}
%     (\Phi f)(x) &\isdef\int f(y)\dd\Phi(x,\dd y)    &&\quad \text{for $x\in\Sigma^{\ZZ^d}$,}\\
%     (\mu\Phi)(E) &\isdef\int\Phi(x,E)\mu(\dd x)     &&\quad \text{for $E\in\field{F}$.}
% \end{alignat}
% We say that $\Phi$ has the \emph{Feller property} if $\Phi f\in C(\Sigma^{\ZZ^d})$ for every $f\in C(\Sigma^{\ZZ^d})$, or equivalently, if $\mu\mapsto\mu\Phi$ is continuous on~$\family{P}(\Sigma^{\ZZ^d})$.
% Likewise, a semigroup $(\Phi^t)_{t\geq 0}$ of transition kernels is said to have the \emph{Feller property} if the action $(t,\mu)\mapsto\mu\Phi^t$ is continuous.

Let $\Phi\colon\pspace{A}\times\field{F}_{\pspace{B}}\to[0,1]$ be a probability transition kernel from a measurable space $(\pspace{A},\field{F}_{\pspace{A}})$ to a measurable space $(\pspace{B},\field{F}_{\pspace{B}})$.  Following the usual convention, we use the left multiplication notation $f\mapsto \Phi f$ for the operator induced by $\Phi$ on bounded measurable functions on~$\pspace{Y}$, and the right multiplication notation $\mu\mapsto\mu\Phi$ for the dual operator on probability measures on~$\pspace{X}$, so that
\begin{alignat}{2}
    (\Phi f)(x) &\isdef\int f(y)\dd\Phi(x,\dd y)\;,    &&\quad \text{for $x\in\pspace{A}$,}\\
    (\mu\Phi)(E) &\isdef\int\Phi(x,E)\mu(\dd x)\;,     &&\quad \text{for $E\in\field{F}_{\pspace{B}}$.}
\end{alignat}
If $X$ and $Y$ are random variables with values in $\pspace{A}$ and $\pspace{B}$, respectively, we write $X\markovto[\Phi]Y$ to indicate that, conditioned on~$X$, $Y$ is distributed according to~$\Phi(X,\cdot)$.
Following the usual convention (\ac{eg}~\cite{CT2006}), we write $X\markovto Y\markovto Z$ if random variables $X, Y, Z$ form a Markov chain, that is, if given $Y$, $X$ and $Z$ are independent.  We write $X\sim \mu$ to indicate $X$ is a random variable with distribution~$\mu$.

% \begin{itemize}
% 	\item Notation $A\Subset B$ (?)
% 	\item Notation $\zinterval{i,j}$ for integer intervals
% 	\item The space $\Sigma^{\ZZ^d}$
% 	\item Shift action $\sigma$
% 	\item Topology of $\Sigma^{\ZZ^d}$; cylinder notation $[w]$
% 	\item $\sigma$-algebra $\field{F}$ of $\Sigma^{\ZZ^d}$
% 	\item Space of probability measures $\family{P}(\Sigma^{\ZZ^d})$
% 	\item $C(\Sigma^{\ZZ^d})$ and local functions; base of a local function
% 	\item Projection $\mu_J$ of a measure $\mu\in\family{P}(\Sigma^{\ZZ^d})$ on a set $J\subseteq\ZZ^d$.
% 	\item Weak topology on $\family{P}(\Sigma^{\ZZ^d})$
% 	\item Introduce the $\sigma$-algebra $\field{F}_J$ for $J\subseteq\ZZ^d$? or the projection $\mu\mapsto\mu_J$?
% 	\item Transition kernels?
% \end{itemize}

\subsection{PCA and influence region}
The global transition kernel of a PCA with local transition rule $\varphi\colon\Sigma^N\times\Sigma\to[0,1]$ is given by
\begin{equation}
    \Phi(x,[w]) \isdef \prod_{k\in J} \varphi\big((\sigma^k x)_N, w_k\big)
\end{equation}
for a configuration $x\in\Sigma^{\ZZ^d}$ and a cylinder $[w]$, where $w\in\Sigma^J$.
Note that if $f\colon\Sigma^{\ZZ^d}\to\RR$ is local, so is $\Phi f$. %hence $\Phi$ has the Feller property.
In analogy with the continuous time setting, we say that $\Phi$ has \emph{positive rates} if $\varphi$ is strictly positive.

Given a set $A\subseteq\ZZ^d$, we let $N(A)\isdef\{a+i: a\in A, i\in N\}=A+N$ be the set of all neighbours of the elements of~$A$.  For $t\in\NN$, the set $N^t(A)$ contains all sites whose states at time $0$ may influence the state of $A$ at time~$t$, in the sense that, for a random trajectory $(X^t)_{t\in\NN}$, we have
\begin{equation}
    \PP(X^t_A\in\cdot\given X^0)=\PP\big(X^t_A\in\cdot\given[\big] X^0_{N^t(A)}\big)
\end{equation}
almost surely.

We refer to~\cite{TVS+1990,MM2014a} for further details about the setting.

\subsection{IPS and asynchronous updating}
\label{sec:IPS:asynchronous-updating}

An IPS with local transition rule $\varphi\colon\Sigma^N\times\Sigma\to[0,1]$ and Poisson clocks $(\xi_k)_{k\in\ZZ^d}$ evolves as follows.  Let $(X^t)_{t\geq 0}$ denote the random trajectory of the system.
A tick of the clock $\xi_k$ at time $t$ triggers an update at site~$k$, so that the symbol $X^t_k$ is resampled according to the distribution $\varphi\big((\sigma^k X^{t-})_N,\cdot)$.  The updates are independent of one another and of the Poisson clocks.
Without loss of generality, the Poisson clocks are assumed to have rate~$1$ throughout this article.
We let $\Phi^t$ denote the transition kernel from time~$0$ to time~$t$.

Alternatively, $\Phi=(\Phi^t)_{t\geq 0}$ is the Markov semigroup corresponding to a generator $L$, which is defined by
\begin{equation}
	(Lf)(x) \isdef
		\sum_{k\in\ZZ^d} \sum_{b\in\Sigma}\varphi\big((\sigma^k x)_N, b\big)\big(f(\zeta_{k\to b}x)-f(x)\big)
\end{equation}
for every local function $f\colon\Sigma^{\ZZ^d}\to\RR$, where %$x^{(k\to b)}$
$\zeta_{k\to b}x$
stands for the configuration that has symbol~$b$ at position~$k$ and agrees with $x$ everywhere else.
Note that $Lf$ is again a local function.

For further details on the setting and technical aspects (including the questions of existence and uniqueness), we refer to~\cite{Liggett1985,Swart2025}.

The connection between infinite-volume IPS and finite-volume models with asynchronous updating scheme can be described as follows.
Given $k\in\ZZ^d$, define $\widehat{\Phi}_k$ to be the transition kernel on $\Sigma^{\ZZ^d}$ that represents the updating of the symbol at site~$k$ using~$\varphi$.  More precisely,
\begin{equation}
	%\widehat{\Phi}_k(x,f) \isdef
    \big(\widehat{\Phi}_k f\big)(x) \isdef
		\sum_{b\in\Sigma}\varphi\big((\sigma^k x)_N, b\big)f(\zeta_{k\to b}x)
	\label{eq:asynchronous-kernel:single-site}
\end{equation}
for every $x\in\Sigma^{\ZZ^d}$ and $f\in C(\Sigma^{\ZZ^d})$.
Given $J\Subset\ZZ^d$, let
\begin{equation}
	\widehat{\Phi}_J \isdef \frac{1}{\abs{J}}\sum_{k\in J}\widehat{\Phi}_k \;.
	\label{eq:asynchronous-kernel:finite-set}
\end{equation}
In words, $\widehat{\Phi}_J$ is the transition kernel for the process of picking a site $k$ uniformly at random from~$J$ and updating the symbol at~$k$ using~$\varphi$.
We call the kernels $\widehat{\Phi}_J$ (for $J\Subset\ZZ^d$) the \emph{asynchronous updating} kernels associated with the local transition rule~$\varphi$.

\begin{observation}[Generator in terms of asynchronous updating]
\label{obs:IPS:generator-in-terms-of-asynchronous}
	Let $L$ be the generator for an IPS with local transition rule $\varphi\colon\Sigma^N\times\Sigma\to[0,1]$ and clock rate~$1$.  Then,
	\begin{equation}
		(Lf)(x) = \abs{J}\Big(\big(\widehat{\Phi}_J f\big)(x)-f(x)\Big)
	\end{equation}
	for every $J\Subset\ZZ^d$ and every local function $f$ with base~$J$.
	% \comment{%
	% In particular, $L$ ``defines a signed kernel'' with
	% \begin{equation}
	% 	L(x,[w]) = \abs{J}\Big(\widehat{\Phi}_J(x,[w]) - \indicator{[w]}(x)\Big)
	% \end{equation}
	% for every $J\Subset\ZZ^d$ and $w\in\Sigma^J$.
	% }
\end{observation}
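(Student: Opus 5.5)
The identity follows by comparing the two sides term by term. The only subtlety is that the generator sums over all sites of $\ZZ^d$, whereas $\widehat{\Phi}_J$ only involves sites in $J$.

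First, fix a local function $f$ with base $J$ and a site $k\notin J$. For every $b\in\Sigma$, the configuration $\zeta_{k\to b}x$ agrees with $x$ on $J$, so $f(\zeta_{k\to b}x)=f(x)$. Hence every term with $k\notin J$ in the definition of $(Lf)(x)$ vanishes, and the (a priori infinite) sum reduces to the finite sum
\begin{equation}
	(Lf)(x) = \sum_{k\in J}\sum_{b\in\Sigma}\varphi\big((\sigma^k x)_N, b\big)\big(f(\zeta_{k\to b}x)-f(x)\big)\;.
\end{equation}

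Second, treat each $k\in J$ separately. Since $\varphi\big((\sigma^k x)_N,\cdot\big)$ is a probability distribution on $\Sigma$, we have $\sum_{b}\varphi\big((\sigma^k x)_N,b\big)f(x)=f(x)$. The inner sum therefore equals $\big(\widehat{\Phi}_k f\big)(x)-f(x)$, by the definition in~\eqref{eq:asynchronous-kernel:single-site}.

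Third, sum over $k\in J$. This gives
\begin{equation}
	(Lf)(x)=\sum_{k\in J}\Big(\big(\widehat{\Phi}_k f\big)(x)-f(x)\Big)=\abs{J}\Big(\big(\widehat{\Phi}_J f\big)(x)-f(x)\Big)\;,
\end{equation}
where the last equality is the definition~\eqref{eq:asynchronous-kernel:finite-set} of $\widehat{\Phi}_J$ as the uniform average of the $\widehat{\Phi}_k$ over $k\in J$.

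The only step needing any care is the first, namely that updates at sites outside the base leave $f$ unchanged. The rest is direct bookkeeping with no real obstacle. If $J$ is empty, $f$ is constant and both sides are zero by convention, so I assume $J\neq\emptyset$.
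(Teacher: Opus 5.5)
Your proof is correct and is exactly the direct verification the paper leaves implicit (it states this as an Observation without a written proof): terms with $k\notin J$ vanish because $f(\zeta_{k\to b}x)=f(x)$, and for $k\in J$ the inner sum equals $(\widehat{\Phi}_k f)(x)-f(x)$ since $\varphi((\sigma^k x)_N,\cdot)$ is a probability vector. Averaging over $k\in J$ then gives the identity, and the argument works for any base $J$ of $f$, not just a minimal one.
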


Thus, one can equivalently think of $L$ as a finitely additive signed kernel on the algebra generated by cylinders, where
\begin{equation}
    L(x,[w]) = \abs{J}\Big(\widehat{\Phi}_J(x,[w]) - \indicator{[w]}(x)\Big)
\end{equation}
for every $x\in\Sigma^{\ZZ^d}$, $J\Subset\ZZ^d$ and $w\in\Sigma^J$.
In particular, $L$ maps a probability measure $\mu\in\family{P}(\Sigma^{\ZZ^d})$ into a finitely additive, signed measure $\mu L$, where
\begin{equation}
    (\mu L)([w]) \isdef \abs{J}\Big(\big(\mu\widehat{\Phi}_J\big)([w]) - \mu([w])\Big)
\end{equation}
for every cylinder~$[w]$.

From Observation~\ref{obs:IPS:generator-in-terms-of-asynchronous}, it immediately follows that:
\begin{observation}[Sufficient condition for stationarity]
\label{obs:IPS:local-preservation}
	%If (for one, and hence all $k\in\ZZ^d$) $\widehat{\Phi}_k$ preserves a measure $\lambda$, then so does the IPS~$\Phi$.
    A probability measure $\lambda$ is stationary for an IPS $\Phi$ if $\lambda\widehat{\Phi}_k=\lambda$ for every $k\in\ZZ^d$.
\end{observation}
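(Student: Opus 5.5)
The plan is to reduce stationarity of $\lambda$ to the vanishing of $\lambda L$ on cylinders, and to read off $\lambda L$ directly from Observation~\ref{obs:IPS:generator-in-terms-of-asynchronous}.

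First, fix a cylinder $[w]$ with $w\in\Sigma^J$ and $J\Subset\ZZ^d$. The indicator $f=\indicator{[w]}$ is a local function with base $J$. Observation~\ref{obs:IPS:generator-in-terms-of-asynchronous} gives $Lf=\abs{J}(\widehat{\Phi}_J f - f)$. Integrating against $\lambda$ and using the definition $\widehat{\Phi}_J=\frac{1}{\abs{J}}\sum_{k\in J}\widehat{\Phi}_k$ yields
\begin{equation}
	\int Lf\,\dd\lambda = \sum_{k\in J}\Big(\big(\lambda\widehat{\Phi}_k\big)([w]) - \lambda([w])\Big) = 0 ,
\end{equation}
since $\lambda\widehat{\Phi}_k=\lambda$ for each $k$ by hypothesis. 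By linearity, $\int Lf\,\dd\lambda=0$ for every local function $f$, that is, $\lambda L=0$.

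Second, I pass from $\lambda L=0$ on local functions to $\lambda\Phi^t=\lambda$ for all $t$. Here I invoke the standard theory of finite-range IPS with bounded rates (Liggett~\cite{Liggett1985}, Swart~\cite{Swart2025}). In that setting the local functions form a core for the generator of the Feller semigroup $(\Phi^t)_{t\geq 0}$. For such semigroups, a measure $\lambda$ is stationary if and only if $\int Lf\,\dd\lambda=0$ for all $f$ in a core (Liggett, Proposition~I.2.13). The argument is as follows. For $f$ in the domain, $\Phi^t f-f=\int_0^t L\Phi^s f\,\dd s$. Each $\Phi^s f$ lies in the domain of $L$ and can be approximated in graph norm by local functions. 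Hence $\int L\Phi^s f\,\dd\lambda=0$, so $\int\Phi^t f\,\dd\lambda=\int f\,\dd\lambda$. Density of local functions in $C(\Sigma^{\ZZ^d})$ then gives $\lambda\Phi^t=\lambda$.

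The only step needing any care is this core argument, and it is quoted from the standard references rather than reproved.
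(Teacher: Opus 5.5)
Your proof is correct and is essentially the paper's argument. The paper obtains the observation immediately from Observation~\ref{obs:IPS:generator-in-terms-of-asynchronous}, which gives $(\lambda L)([w])=\abs{J}\big((\lambda\widehat{\Phi}_J)([w])-\lambda([w])\big)=0$; it leaves implicit the standard passage from $\lambda L=0$ on local functions to $\lambda\Phi^t=\lambda$, which you justify via the core property of local functions (Liggett, Proposition~I.2.13).
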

The converse is not true, as we shall see in Example~\ref{exp:IPS:stationary-but-not-locally}.

% \comment{%
% TO DO:
% \begin{itemize}
% 	\item Make sense of notation $\mu L$
% 	\item Question in the last section: Is the converse of the above observation true as well?
% \end{itemize}
% }
% \irene{Converse statement is false, cf. Example~5.}

% \subsection{Ergodicity and mixing times}

% Two aspects of the ``speed of convergence'':
% \begin{itemize}
% 	\item Exponential convergence.  Martinelli's example in which the convergence is sub-exponential.  A non-trivial example in which the convergence is super-exponential (XOR + noise)
% 	\item Mixing time of a finite region (time it forgets initial condition up to a given precision)
% 	\item The (trivial) proof of Corollary~\ref{cor:main:mixing-time}?
% \end{itemize}

\subsection{Relative entropy}

% \begin{itemize}
% 	\item Notation for relative entropy; chain rule in this notation
% 	\item Notation $A\markovto B\markovto C$
% 	\item Type of a random variable (?)
% 	\item Recall that if $A\markovto[\theta]B$ and $A'\markovto[\theta]B'$, then $D\big((B\given A)\relto[\big](B'\given A')\big)=0$.
% 	\item Notation $D_J(\mu\relto\lambda)$ for $D(\mu_J\relto\lambda_J)$.
% \end{itemize}

% Let $p,q\colon \Gamma\to[0,1]$ be probability distributions on a finite set $\Gamma$.  Recall that the \emph{relative entropy} (\ac{aka}, Kullback-Leibler divergence) between $p$ and $q$ is defined as
% \begin{equation}
%     D(p\relto q) \isdef 
%         \begin{dcases}
%             \sum_{a\in\Gamma} p(a) \log \frac{p(a)}{q(a)}       & \text{if $p\ll q$,} \\
%             \infty                                              & \text{otherwise,}
%         \end{dcases}
% \end{equation}
% with the conventions $0\log 0=0$ and $0\log\frac{0}{0}=0$.

% For random variables $A\sim p$ and $A'\sim q$, we write $D(A\relto A')$ for $D(p\relto q)$.

The relative entropy (\ac{aka}, Kullback-Leibler divergence) between two discrete probability distributions $p,q\colon\Sigma\to[0,1]$ is denoted by~$D(p\relto q)$ (see \ac{eg},~\cite{CT2006}).
For random variables $A\sim p$ and $A'\sim q$, we write $D(A\relto A')$ for $D(p\relto q)$, hence
\begin{equation}
    D(A\relto A') \isdef \sum_{a\in\Sigma} \PP(A=a)\log\frac{\PP(A=a)}{\PP(A'=a)} \;,
\end{equation}
with the usual conventions $0\log 0=0$, $0\log\frac{0}{0}=0$, and $x\log\frac{x}{0}=\infty$ for $x>0$ so as to make $D$ continuous.
Given pairs  of discrete random variables $(A,C)$ and $(A',C')$ with values from $\Sigma\times\Gamma$, we use the following notation for conditional relative entropy:
% \begin{align}
%     D\big((A\given C) \relto[\big] (A'\given C')\big) &\isdef
%         \sum_{c\in\Gamma}\sum_{a\in\Sigma}\PP(A=a,C=c) \log\frac{\PP(A=a\given C=c)}{\PP(A'=a\given C'=c)} \;, \\
%     D\big((A\given C) \relto[\big] A'\big) &\isdef
%         \sum_{c\in\Gamma}\sum_{a\in\Sigma}\PP(A=a,C=c) \log\frac{\PP(A=a\given C=c)}{\PP(A'=a)} \;,
% \end{align}
\begin{align}
    D\big((A\given C) \relto[\big] (A'\given C')\big) \isdef
        \sum_{c\in\Gamma}\sum_{a\in\Sigma}\PP(A=a,C=c) \log\frac{\PP(A=a\given C=c)}{\PP(A'=a\given C'=c)} \;.
\end{align}
With this notation, the chain rule of relative entropy can be expressed as follows:
\begin{equation}
    D\big((A,C)\relto (A',C')\big) =
        D(C\relto C') + D\big((A\given C) \relto[\big] (A'\given C')\big) \;.
\end{equation}

Note that, for $D\big((A\given C) \relto[\big] (A'\given C')\big)$ to make sense, $(A,C)$ and $(A',C')$ need not be defined in the same probability space.  Nonetheless, having them coupled sometimes simplifies notations and arguments.
\begin{lemma}[Conditioning on common information / Convexity of relative entropy]
\label{lem:relative-entropy:conditioning-on-common-info}
	%Let $A$ and $A'$ be random variables taking values in a finite set, and let $C$ be another random variable.
    Let $A$, $A'$ and $C$ be discrete random variables.
	Then, 
	\begin{equation}
		D(A\relto A') \leq D\big((A\given C)\relto[\big](A'\given C)\big) \;.
	\end{equation}
\end{lemma}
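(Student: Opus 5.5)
The idea is to write both sides with a common conditioning variable and apply the chain rule twice. Here $D\big((A\given C)\relto(A'\given C)\big)$ is read as in the notation above: the first pair is $(A,C)$ and the second pair is $(A',C)$. The latter is a coupling (or any pair) in which the second coordinate has the same law as $C$. In the conditional kernel $\PP(A'=a\given C=c)$, the variable $C$ plays the role of the ``$C'$'' from the definition, and has the same marginal distribution as the $C$ in the first pair.

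The first step is the chain rule applied to the pairs $(A,C)$ and $(A',C)$, in both orders:
\begin{align}
D\big((A,C)\relto(A',C)\big) &= D(C\relto C) + D\big((A\given C)\relto(A'\given C)\big) = D\big((A\given C)\relto(A'\given C)\big),\\
D\big((A,C)\relto(A',C)\big) &= D(A\relto A') + D\big((C\given A)\relto(C\given A')\big).
\end{align}
The first line uses $D(C\relto C)=0$, since the marginal of $C$ is the same on both sides.

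The second step is to note that $D\big((C\given A)\relto(C\given A')\big)\ge 0$. It is an average, over $a$ weighted by $\PP(A=a)$, of ordinary relative entropies between the conditional laws of $C$ given $A=a$ and given $A'=a$, and each of these is nonnegative by Gibbs' inequality. Comparing the two expressions then gives the claim.

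The only delicate point is the handling of infinite values and zero-probability conditioning events. If $\PP(A'=a\given C=c)=0$ while $\PP(A=a,C=c)>0$, the right-hand side is $+\infty$ and the inequality is trivial. Otherwise the joint law of $(A,C)$ is absolutely continuous with respect to that of $(A',C)$, and the chain rule identities hold with finite terms. Terms with $\PP(A=a)=0$ contribute nothing.

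As an alternative, one can argue by convexity. Write $\PP(A=\cdot)=\sum_c \PP(C=c)\,\PP(A=\cdot\given C=c)$, and similarly for $A'$. Joint convexity of $D$ (equivalently, the log-sum inequality applied termwise in $a$) then yields
\[
D(A\relto A')\le\sum_c\PP(C=c)\,D\big(\PP(A\in\cdot\given C=c)\relto\PP(A'\in\cdot\given C=c)\big),
\]
and the right-hand side is exactly the conditional relative entropy in the statement. This is the source of the lemma's alternate name, ``convexity of relative entropy''.
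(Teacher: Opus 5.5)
Your proof is correct and follows the paper's own argument: expand $D\big((A,C)\relto(A',C)\big)$ by the chain rule in both orders, then use $D(C\relto C)=0$ and $D\big((C\given A)\relto(C\given A')\big)\geq 0$. Your treatment of infinite values and your alternative log-sum (joint convexity) route are valid additions that the paper does not spell out.
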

\begin{proof}
	Breaking down $D\big((A,C)\relto[\big](A',C)\big)$ using the chain rule in two different ways gives
	\begin{equation}
		D(A\relto A') + D\big((C\given A)\relto[\big](C\given A')\big) =
		D(C\relto C) + D\big((A\given C)\relto[\big](A'\given C)\big) \;.
	\end{equation}
	Since $D(C\relto C)=0$ and $D\big((C\given A)\relto[\big](C\given A')\big)\geq 0$, we obtain the claimed inequality.
\end{proof}

We also define
\begin{align}
    D\big((A\given C) \relto[\big] A'\big) \isdef
        \sum_{c\in\Gamma}\sum_{a\in\Sigma}\PP(A=a,C=c) \log\frac{\PP(A=a\given C=c)}{\PP(A'=a)} \;.
\end{align}
It is easy to verify that:
\begin{observation}[Conditional relative entropy and mutual information] %[Csiszár identity]
\label{obs:csiszar-identity}
	% Let $A$ and $A'$ be random variables taking values in a finite set, and let $C$ be another random variable.
	% Then, 
	\begin{equation}
		D\big((A\given C) \relto[\big] A'\big) = D(A \relto A') + I(A : C) \;.
	\end{equation}
\end{observation}
% \begin{proof}
% 	We have
% 	\begin{align}
% 		D\big((A\given C) \relto[\big] A'\big) &=
% 			\sum_{a,c}\PP(A=a,C=c)\log\frac{\PP(A=a\given C=c)}{\PP(A'=a)} \\
% 		&=
% 			\sum_a\PP(A=a)\log\frac{1}{\PP(A'=a)} - H(A\given C) + H(A) - H(A) \\
% 		&=
% 			\sum_a\PP(A=a)\log\frac{\PP(A=a)}{\PP(A'=a)} + I(A:C) \\
% 		&=
% 			D(A \relto A') + I(A:C) \;.
%             \qedhere
% 	\end{align}
% \end{proof}

We will need the following upper bound.
\begin{lemma}[Upper bound on relative entropy]
\label{lem:relative-entropy:bound}
	Let $\lambda=\lambda_q$  be a Bernoulli measure with strictly positive marginal $q\colon\Sigma\to(0,1)$.
	Let $X$ and $Z$ be random configurations from $\Sigma^{\ZZ^d}$, and suppose that $Z\sim\lambda$.
	Then, %for every two disjoint finite sets $A,B\Subset\ZZ^d$, we have
	\begin{equation}
		D\big((X_A\given X_B) \relto[\big] Z_A\big) \leq
			\abs{A}\log(\sfrac{1}{q_\smallest}) \;.
	\end{equation}
	for every two disjoint finite sets $A,B\Subset\ZZ^d$,
	where $q_\smallest\isdef\min\{q(a): a\in\Sigma\}$.
	%\comment{Better upper bound based on variational principle?}
\end{lemma}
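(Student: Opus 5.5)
Since the left-hand side is a finite sum over the values of $X_A$ and $X_B$, the plan is to bound it term by term, using that every pattern on $A$ has $\lambda$-probability at least $q_\smallest^{\abs{A}}$. Fix disjoint finite $A,B\Subset\ZZ^d$ and write out the definition:
\begin{equation}
	D\big((X_A\given X_B)\relto[\big] Z_A\big)
	= \sum_{v\in\Sigma^B}\sum_{u\in\Sigma^A}\PP(X_A=u,X_B=v)\log\frac{\PP(X_A=u\given X_B=v)}{\PP(Z_A=u)} \;.
\end{equation}
Because $Z\sim\lambda$ and $\lambda$ is Bernoulli with marginal $q$, we have $\PP(Z_A=u)=\prod_{k\in A}q(u_k)\geq q_\smallest^{\abs{A}}>0$. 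In particular, every denominator is strictly positive, so no term is infinite.

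For each summand, bound the numerator by $\PP(X_A=u\given X_B=v)\leq 1$. The logarithm is then at most $\log\big(1/\PP(Z_A=u)\big)\leq\abs{A}\log(\sfrac{1}{q_\smallest})$. Terms with $\PP(X_A=u,X_B=v)=0$ vanish by the convention $0\log 0=0$. Summing against the weights $\PP(X_A=u,X_B=v)$, which total $1$, gives the claimed bound.

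There is no real obstacle here. The only care needed is with the conventions for zero-probability terms, and with the fact that $X$ and $Z$ need not be coupled. As a cross-check, Observation~\ref{obs:csiszar-identity} rewrites the quantity as $D(X_A\relto Z_A)+I(X_A:X_B)$. This equals $-H(X_A\given X_B)-\sum_u\PP(X_A=u)\log\PP(Z_A=u)$, and since $H(X_A\given X_B)\geq 0$ this recovers the same bound.
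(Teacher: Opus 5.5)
Your proof is correct, and it gets the bound slightly more directly than the paper does.

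The paper works site by site:
\begin{itemize}
\item It first shows that every $p$ on $\Sigma$ satisfies $D(p\relto q)=\sum_a p(a)\log\frac{1}{q(a)}-H(p)\leq\log(\sfrac{1}{q_\smallest})$.
\item It then enumerates $A=\{k_1,\ldots,k_n\}$ and applies the chain rule, using that the coordinates of $Z$ are independent. This writes the left-hand side as a sum of $n$ terms $D\big((X_{k_i}\given X_{B\cup A_{<i}})\relto Z_{k_i}\big)$.
\item Each term is bounded by averaging the single-site estimate over the conditioning.
\end{itemize}

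You treat $A$ as a single block instead. The product structure of $\lambda$ enters only through $\PP(Z_A=u)\geq q_\smallest^{\abs{A}}$. The nonnegativity of entropy appears as $\PP(X_A=u\given X_B=v)\leq 1$, that is $H(X_A\given X_B)\geq 0$, as your cross-check makes explicit.

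The underlying inequality is the same in both arguments: the cross-entropy against $\lambda_A$ is at most $\abs{A}\log(\sfrac{1}{q_\smallest})$, and it is reduced by a nonnegative entropy term. Your version is a bit shorter and needs neither the chain rule nor the paper's tacit replacement of $(Z_A\given Z_B)$ by $Z_A$. Like the paper's, it holds verbatim for conditioning on any discrete variable, which is the form in which the lemma is later invoked, for instance with conditioning on $W_i$ in the bootstrap lemma. Nothing is lost by your route.
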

\begin{proof}
	First, observe that for every probability distribution $p\colon\Sigma\to[0,1]$, we have
	\begin{align}
		D(p\relto q) &= \sum_{a\in\Sigma}p(a)\log\frac{p(a)}{q(a)}
			= \sum_{a\in\Sigma}p(a)\log\frac{1}{q(a)} - H(p)
			\leq \sum_{a\in\Sigma}p(a)\log(\sfrac{1}{q_\smallest})
			= \log(\sfrac{1}{q_\smallest}) \;.
	\end{align}
	Now, let $k_1,k_2,\ldots,k_n$ be an arbitrary ordering of the elements of~$A$,
	and for $i=1,2,\ldots,n$, let $A_{<i}\isdef\{k_1,k_2,\ldots,k_{i-1}\}$.
	Using the chain rule, we can write
	\begin{align}
		D\big((X_A\given X_B) \relto[\big] (Z_A\given Z_B)\big) &=
			\sum_{i=1}^n D\big((X_{k_i}\given X_{B\cup A_{<i}}) \relto[\big] (Z_{k_i}\given Z_{B\cup A_{<i}})\big)
	\end{align}
	Since $Z$ is \ac{iid} with marginal~$q$, the above observation and averaging give us
	\begin{align}
		D\big((X_{k_i}\given X_{B\cup A_{<i}}) \relto[\big] Z_{k_i}\big)
		&\leq \log(\sfrac{1}{q_\smallest})
	\end{align}
	for each $i$. The claim follows.
\end{proof}

When $\mu$ and $\lambda$ are probability measures on $\Sigma^{\ZZ^d}$ and $J\Subset\ZZ^d$, we use the notation $D_J(\mu\relto\lambda)$ for the relative entropy $D(\mu_J\relto\lambda_J)$ between the marginals of $\mu$ and $\lambda$ on~$J$.

\section{Entropy contraction for non-interacting Markov chains}
\label{sec:non-interacting-MCs}
In this section, we present relative entropy contraction inequalities (also known as strong data processing inequalities) for collections of non-interacting finite-state Markov chains that evolve either synchronously or asynchronously.  The results discussed here are either standard themselves, are simple variations of standard results, or easily derived from known results (see \ac{eg}, \cite{Raginsky2016,CMS2025}).

We start by recalling the data processing inequalities for a single Markov chain.

\begin{proposition}[Weak data processing inequality]
\label{prop:WDPI}
	Let $\Sigma$ and $\Gamma$ be finite sets.
	Let $\theta\colon\Sigma\times\Gamma\to[0,1]$ a stochastic matrix.
	Then, $D(p\theta \relto q\theta)\leq D(p \relto q)$ for every two probability distributions $p,q\colon\Sigma\to[0,1]$.
\end{proposition}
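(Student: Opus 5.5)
We will derive the weak data processing inequality from the chain rule, in the same coupling style used for Lemma~\ref{lem:relative-entropy:conditioning-on-common-info}. On one probability space, take random variables $(A,B)$ with $A\sim p$ and $A\markovto[\theta]B$. On another, take $(A',B')$ with $A'\sim q$ and $A'\markovto[\theta]B'$. Then $B\sim p\theta$ and $B'\sim q\theta$, so the claim reads $D(B\relto B')\leq D(A\relto A')$.

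The plan is to expand $D\big((A,B)\relto(A',B')\big)$ by the chain rule in the two possible orders:
\begin{equation}
	D(A\relto A') + D\big((B\given A)\relto[\big](B'\given A')\big)
	=
	D(B\relto B') + D\big((A\given B)\relto[\big](A'\given B')\big) \;.
\end{equation}
Consider the second term on the left-hand side. For every $a$ with $\PP(A=a)>0$, both conditional laws $\PP(B=\cdot\given A=a)$ and $\PP(B'=\cdot\given A'=a)$ equal $\theta(a,\cdot)$, so every logarithm in that sum vanishes and the term is $0$. The second term on the right-hand side is nonnegative, because it is an average of ordinary relative entropies. Combining these two facts gives $D(B\relto B')\leq D(A\relto A')$.

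The only delicate point is the handling of conditionals on events of zero probability, together with the case $D(p\relto q)=\infty$. If $D(p\relto q)=\infty$, there is nothing to prove. Otherwise $p\ll q$, so every $a$ with $p(a)>0$ also has $q(a)>0$, and hence $\PP(A'=a)>0$. The conditional law of $B'$ given $A'=a$ is then well defined exactly where it is needed. Likewise, $p\theta\ll q\theta$, so every conditional appearing on the right-hand side is well defined on the support of $(A,B)$. The chain rule holds in the stated form with the paper's conventions, so the argument goes through.
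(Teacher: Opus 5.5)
Your proof is correct and follows the same route the paper indicates. The paper only remarks that the result is a straightforward application of the chain rule. Your two-way expansion of $D\big((A,B)\relto(A',B')\big)$, which mirrors the proof of Lemma~\ref{lem:relative-entropy:conditioning-on-common-info}, supplies exactly those details, and it correctly handles the case $p\not\ll q$ and the zero-probability conditionals.
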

The proof is a straightforward application of the chain rule for relative entropy.
%\begin{proof}[Proof \comment{temporary; to be removed}]
%\end{proof}
%The following, which is more often referred to as the data processing inequality, is a consequence of Proposition~\ref{prop:WDPI}.
The following special case of Proposition~\ref{prop:WDPI} explains its namesake.
\begin{proposition}[Weak data processing inequality for mutual information]
\label{prop:WDPI:mutual-information}
	Let $A, B, C$ be random variables such that $A\to B\to C$.
	Then, $I(A:C)\leq I(A:B)$.
\end{proposition}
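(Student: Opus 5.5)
We derive Proposition~\ref{prop:WDPI:mutual-information} from Proposition~\ref{prop:WDPI} by writing mutual information as an average of relative entropies. Throughout, $A,B,C$ are discrete (finite-valued), so everything reduces to finite sums. Let $\theta$ be the stochastic matrix with $\theta(b,c)\isdef\PP(C=c\given B=b)$, defined arbitrarily where $\PP(B=b)=0$. The Markov property $A\markovto B\markovto C$ gives $\PP(C=c\given A=a, B=b)=\theta(b,c)$. Summing over $b$ shows that, for each $a$ with $\PP(A=a)>0$, the conditional law of $C$ given $A=a$ is $p_a\theta$, where $p_a\isdef\PP(B\in\cdot\given A=a)$. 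Likewise, the law of $C$ is $p\theta$, where $p\isdef\PP(B\in\cdot)$.

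Next we use the standard identity
\begin{equation}
	I(A:B)=\sum_a \PP(A=a)\, D\big(\PP(B\in\cdot\given A=a)\relto[\big]\PP(B\in\cdot)\big) \;,
\end{equation}
which is just the definition of mutual information rewritten via $\log\frac{\PP(A=a,B=b)}{\PP(A=a)\PP(B=b)}=\log\frac{\PP(B=b\given A=a)}{\PP(B=b)}$. In the paper's notation this is also $D\big((B\given A)\relto[\big] B\big)$, by Observation~\ref{obs:csiszar-identity} with $A'=B$. Applying the same identity to the pair $(A,C)$ gives
\begin{equation}
	I(A:C)=\sum_a \PP(A=a)\, D(p_a\theta\relto p\theta) \;.
\end{equation}

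Proposition~\ref{prop:WDPI} then gives $D(p_a\theta\relto p\theta)\leq D(p_a\relto p)$ for each $a$. Summing with the nonnegative weights $\PP(A=a)$ yields $I(A:C)\leq I(A:B)$.

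The only point needing care is identifying the conditional law of $C$ given $A=a$ as $p_a\theta$. This is exactly where the Markov property enters: it lets the kernel $\theta$ be chosen independently of $a$. The remaining steps are routine bookkeeping, including the convention on null events for $\theta$ and the fact that $p_a\ll p$, so all the relative entropies involved are finite.
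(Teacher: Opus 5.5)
Your proof is correct and follows essentially the paper's route. The paper gives no separate argument and only remarks that this is a special case of Proposition~\ref{prop:WDPI}; you make that precise by writing $I(A:B)$ as the $\PP(A=a)$-weighted average of $D(p_a\relto p)$ and applying Proposition~\ref{prop:WDPI} with the kernel $\theta(b,c)=\PP(C=c\given B=b)$, which the Markov property makes independent of~$a$.
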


\begin{proposition}[Strong data processing inequality]
\label{prop:SDPI}
	Let $\Sigma$ be finite set.
	Let $\theta\colon\Sigma\times\Sigma\to(0,1)$ be a strictly positive stochastic matrix with stationary distribution~$q\colon\Sigma\to(0,1)$.  Let $\kappa>0$ be such that $\theta(a,b)\geq\kappa q(b)$ for each $a,b\in\Sigma$.
	Then, $D(p\theta \relto q)\leq (1-\kappa) D(p \relto q)$ for every probability distribution $p\colon\Sigma\to[0,1]$.
\end{proposition}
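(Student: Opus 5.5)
The plan is to write $\theta$ as a mixture of a ``fresh resampling'' kernel and a residual kernel, and then use convexity of relative entropy. Since $\theta(a,b)\geq\kappa q(b)$ for all $a,b$, we have $\kappa\leq 1$ by summing over~$b$. If $\kappa=1$, then $\theta(a,\cdot)=q$ for every $a$, so $p\theta=q$ and the claim is trivial. Otherwise $\kappa<1$, and we define
\begin{equation}
	\rho(a,b) \isdef \frac{\theta(a,b)-\kappa q(b)}{1-\kappa} \;.
\end{equation}
This $\rho$ is a stochastic matrix: its entries are nonnegative by hypothesis and its rows sum to one. We then have the decomposition $\theta = \kappa\,\mathbf{1}q + (1-\kappa)\rho$, where $\mathbf{1}q$ denotes the matrix all of whose rows equal~$q$.

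Next we check that $\rho$ preserves~$q$. From $q\theta=q$ and $q(\mathbf{1}q)=q$ we get $(1-\kappa)q\rho = q-\kappa q = (1-\kappa)q$, hence $q\rho=q$. It follows that
\begin{equation}
	p\theta = \kappa q + (1-\kappa)\,p\rho \;.
\end{equation}

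The main step is an appeal to joint convexity of relative entropy; equivalently, one can use Lemma~\ref{lem:relative-entropy:conditioning-on-common-info} with a Bernoulli$(\kappa)$ selector variable $C$ that is shared by both sides. Writing $q=\kappa q+(1-\kappa)q$, convexity gives
\begin{equation}
	D(p\theta\relto q) \leq \kappa D(q\relto q) + (1-\kappa) D(p\rho\relto q\rho) = (1-\kappa)D(p\rho\relto q\rho) \;.
\end{equation}
Proposition~\ref{prop:WDPI}, applied to $\rho$, then bounds $D(p\rho\relto q\rho)$ by $D(p\relto q)$, which completes the argument.

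To make the coupling behind the convexity step explicit, let $C\sim$ Bernoulli$(\kappa)$ be independent of everything else. On $C=1$, output a sample from $q$; on $C=0$, output a sample of $A\rho$ for the first distribution and of $A'\rho$ for the second. Lemma~\ref{lem:relative-entropy:conditioning-on-common-info} gives $D(p\theta\relto q)\leq \kappa\cdot 0+(1-\kappa)D(p\rho\relto q)$, and $q\rho=q$ lets us apply Proposition~\ref{prop:WDPI}.

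No step is a real obstacle. The only point needing care is confirming that $\rho$ is a genuine stochastic matrix with $q\rho=q$, together with the degenerate case $\kappa=1$ handled at the start.
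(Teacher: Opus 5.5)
Your proof is correct and follows the paper's argument: the paper uses the same decomposition $\theta=\kappa\,\mathbf{1}q+(1-\kappa)\rho$ and the same convexity step, realized through Lemma~\ref{lem:relative-entropy:conditioning-on-common-info} with a shared Bernoulli$(\kappa)$ selector, and then applies Proposition~\ref{prop:WDPI} to the residual kernel. Your explicit checks that $q\rho=q$, which Proposition~\ref{prop:WDPI} needs here, and of the degenerate case $\kappa=1$ are points the paper leaves implicit.
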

The contraction factor $1-\kappa$ is not the sharpest possible, even in the above general setting.  See for instance~\cite{Raginsky2016,CMS2025,CCG+2025}.  We include a proof of the above version because it illustrates the starting idea in the proofs of Theorems~\ref{thm:main:PCA:ergodicity} and~\ref{thm:main:IPS:ergodicity}.
\begin{proof}[Proof of Proposition~\ref{prop:SDPI}]
	By virtue of the hypothesis, we can decompose $\theta$ as
	\begin{equation}
		\theta(a,b) = \kappa q(b) + (1-\kappa)\widetilde{\theta}(a,b) \;,
	\end{equation}
	where $\widetilde{\theta}\colon\Sigma\times\Sigma\to[0,1]$ is another stochastic matrix. %, which may not be strictly positive.
	Let us construct random variables $A$, $\widetilde{B}$, $B$, $Q$ and $E$ as follows.
	We draw $A$ according to~$p$ and $Q$ according to $q$ independently of one another.  We then draw $\widetilde{B}$ according to~$\widetilde{\theta}(A,\cdot)$, independently of~$Q$.  We also draw $E$ according to the Bernoulli distribution with parameter~$\kappa$, independently of $A$, $Q$, and $\widetilde{B}$.  Lastly, we let
	\begin{equation}
		B =
			\begin{cases}
				Q					& \text{if $E=\symb{1}$,}\\
				\widetilde{B}		& \text{if $E=\symb{0}$.}
			\end{cases}
	\end{equation}
	Observe that $A\markovto[\theta]B$, and in particular, $B$ has distribution~$p\theta$.
	We have
	\begin{align}
		D(p\theta\relto q) &=
			D(B\relto Q) \\
		&\leq
			D\big((B\given E)\relto[\big](Q\given E)\big)
				&& \text{(Lemma~\ref{lem:relative-entropy:conditioning-on-common-info})} \\
		&=
			D\big((B\given E)\relto[\big] Q\big)
				&& \text{(independence of $Q$ and $E$)} \\
		&=
			\begin{multlined}[t]
				\PP(E=\symb{1})D\big((B\given E=\symb{1})\relto[\big] Q\big) \\
				+
				\PP(E=\symb{0})D\big((B\given E=\symb{0})\relto[\big] Q\big)
			\end{multlined} \\
		&=
			\kappa D(Q\relto Q)
			+
			(1-\kappa) D(\widetilde{B}\relto Q)
				&& \text{(definition of~$B$)} \\
		&\leq
			(1-\kappa) D(A\relto Q)
				&& \text{(Proposition~\ref{prop:WDPI})} \\
		&=
			(1-\kappa) D(p\relto q) \;,
	\end{align}
	which proves the claim.
\end{proof}

We next consider a collection of non-interacting identical Markov chains that evolve synchronously or asynchronously.  More specifically, given a stochastic matrix $\theta\colon\Sigma\times\Sigma\to[0,1]$ and a positive integer~$n$, we consider two stochastic matrices $\theta_n,\widehat{\theta}_n\colon\Sigma^n\times\Sigma^n\to[0,1]$, where
\begin{align}
	\theta_n(\vect{a},\vect{b}) &\isdef \prod_{i=1}^n\theta(a_i,b_i) & &\text{and}&
	\widehat{\theta}_n(\vect{a},\vect{b}) &\isdef
		\frac{1}{n}\sum_{i=1}^n \theta(a_i,b_i)\prod_{j\neq i}\indicator{a_j}(b_j)
\end{align}
for $\vect{a}=(a_1,a_2,\ldots,a_n),\vect{b}=(b_1,b_2,\ldots,b_n)\in\Sigma^n$.
In words, $\theta_n$ is the transition matrix of a Markov chain involving $n$ components that are updated independently, in parallel, according to the transition probabilities prescribed by~$\theta$.  In contrast, in a Markov chain with transition matrix $\widehat{\theta}_n$, at every step, only one of the $n$ components is selected uniformly at random and is updated according to~$\theta$.  Observe that if $q$ is a stationary distribution for~$\theta$, then $\big({\otimes_{i=1}^n q}\big)(\vect{a})\isdef\prod_{i=1}^n q(a_i)$ is a stationary distribution for both $\theta_n$ and $\widehat{\theta}_n$.

\begin{proposition}[Strong data processing inequality for synchronous updating]
\label{prop:SDPI:synchronous}
	Let $\Sigma$ be a finite set.
	Let $\theta\colon\Sigma\times\Sigma\to(0,1)$ be a strictly positive stochastic matrix with stationary distribution~$q\colon\Sigma\to(0,1)$.  Let $\kappa>0$ be such that $\theta(a,b)\geq\kappa q(b)$ for each $a,b\in\Sigma$.
	Let $n$ be a positive integer.
	Then,
	\begin{equation}
		D\left(\vect{p}\theta_n \relto[\big] \otimes_{i=1}^n q\right) \leq (1-\kappa) D\big(\vect{p} \relto[\big] \otimes_{i=1}^n q\big)
	\end{equation}
	for every probability distribution $\vect{p}\colon\Sigma^n\to[0,1]$.
\end{proposition}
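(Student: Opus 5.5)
The plan is to run the argument from the proof of Proposition~\ref{prop:SDPI} coordinate by coordinate, combining it with the chain rule so that the contraction factor does not depend on~$n$.

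\textbf{Step 1: the coupling.} Decompose $\theta(a,b)=\kappa q(b)+(1-\kappa)\widetilde{\theta}(a,b)$, exactly as in the proof of Proposition~\ref{prop:SDPI}. Draw $\vect{A}\sim\vect{p}$. Independently of $\vect{A}$ and of each other, draw $Q_1,\ldots,Q_n\sim q$ \ac{iid} and $E_1,\ldots,E_n$ \ac{iid} Bernoulli$(\kappa)$. Conditionally on $\vect{A}$, draw $\widetilde{B}_i\sim\widetilde{\theta}(A_i,\cdot)$ independently across~$i$. Set $B_i=Q_i$ if $E_i=\symb{1}$ and $B_i=\widetilde{B}_i$ otherwise. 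Then $\vect{A}\markovto[\theta_n]\vect{B}$, so $\vect{B}$ has law $\vect{p}\theta_n$. Let $\vect{Q}=(Q_1,\dots,Q_n)$, so $\vect{Q}\sim\otimes q$, and $\vect{E}$ is independent of~$\vect{Q}$.

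\textbf{Step 2: conditioning and reduction.} By Lemma~\ref{lem:relative-entropy:conditioning-on-common-info},
\[
D(\vect{B}\relto\vect{Q})\le D\big((\vect{B}\given\vect{E})\relto(\vect{Q}\given\vect{E})\big)
=\sum_{e\in\{0,1\}^n}\PP(\vect{E}=e)\,D\big((\vect{B}\given\vect{E}=e)\relto\vect{Q}\big).
\]
Fix $e$ and let $S=\{i:e_i=\symb{0}\}$. Given $\vect{E}=e$:
\begin{itemize}[label={}]
\item the coordinates in $S^c$ are \ac{iid} $q$ and independent of $\vect{B}_S$;
\item $\vect{B}_S=\widetilde{\vect{B}}_S$, which is the image of $\vect{A}_S$ under the product kernel $\widetilde{\theta}_{|S|}$.
\end{itemize}
Hence the chain rule gives $D\big((\vect{B}\given\vect{E}=e)\relto\vect{Q}\big)=D(\widetilde{\vect{B}}_S\relto\vect{Q}_S)$. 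Since $\otimes q$ is stationary for $\widetilde{\theta}$ (because $q\theta=q$ forces $q\widetilde{\theta}=q$), Proposition~\ref{prop:WDPI} yields
\[
D(\widetilde{\vect{B}}_S\relto\vect{Q}_S)\le D(\vect{A}_S\relto\vect{Q}_S)=D_S(\vect{p}\relto\otimes q).
\]
So it remains to bound $\mathbb{E}\big[D_S(\vect{p}\relto\otimes q)\big]$ for the random set $S$, which contains each $i$ independently with probability $1-\kappa$.

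\textbf{Step 3: the main obstacle.} We need a Han/Shearer-type inequality for relative entropy with respect to a product reference measure:
\[
\mathbb{E}\big[D_S(\vect{p}\relto\otimes q)\big]\le(1-\kappa)\,D(\vect{p}\relto\otimes q).
\]
To prove it, write $D_S$ with the chain rule along the ordering $1,\dots,n$. This gives
\[
D_S=\sum_{i\in S}D\big((A_i\given \vect{A}_{S\cap\zinterval{1,i-1}})\relto q\big).
\]
Here the reference conditional equals $q$ because the reference measure is a product. By Observation~\ref{obs:csiszar-identity}, each term equals $D(A_i\relto q)+I(A_i:\vect{A}_{S\cap\zinterval{1,i-1}})$. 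The mutual information is monotone in the conditioning set, so each term is at most
\[
D\big((A_i\given \vect{A}_{\zinterval{1,i-1}})\relto q\big).
\]
The term for $i$ appears only when $i\in S$, which happens with probability $1-\kappa$. Taking expectations and summing, the full chain rule gives exactly $(1-\kappa)D(\vect{p}\relto\otimes q)$.

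Combining Steps 1–3 proves the proposition. The only delicate point is the monotonicity and independence bookkeeping in Step 3; the rest is the single-site argument already given.
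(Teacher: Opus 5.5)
Your proof is correct, but it takes a different route from the paper's. The paper never introduces erasure variables for the product kernel. It couples $\vect{A}\markovto[\theta_n]\vect{B}$ directly and expands $D(\vect{B}\relto\vect{Q})$ by the chain rule. In each term it replaces the conditioning on $\vect{B}_{<k}$ by conditioning on $\vect{A}_{<k}$, using Observation~\ref{obs:csiszar-identity} and Proposition~\ref{prop:WDPI:mutual-information} along $B_k\markovto\vect{A}_{<k}\markovto\vect{B}_{<k}$. It then applies the single-site inequality of Proposition~\ref{prop:SDPI} conditionally on $\vect{A}_{<k}$, as a black box, and the chain rule for $\vect{A}$ closes the argument.

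You instead lift the erasure coupling from the single-site proof to all $n$ coordinates at once and condition on the whole erasure pattern. You dispose of $\widetilde{\theta}$ with the weak data processing inequality on the surviving set $S$. That leaves a Han/Shearer-type bound on the expected marginal relative entropy $D_S$, which you prove by the chain rule and monotonicity of mutual information. This is the same kind of monotonicity step the paper uses at the end of the proof of Proposition~\ref{prop:SDPI:asynchronous}.

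The paper's argument is shorter. Yours makes the erasure-channel structure explicit. It also has the advantage that Step~3 uses only the inclusion probabilities $\PP(i\in S)$ and the nonnegativity of the conditional terms, not independence across coordinates. So the same argument also proves Proposition~\ref{prop:SDPI:asynchronous}: take $S=\{1,\dots,n\}\setminus\{K\}$ on an erasure of the updated site and $S=\{1,\dots,n\}$ otherwise. This gives $\PP(i\in S)=1-\kappa/n$ and the factor $1-\kappa/n$.

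One small point: the decomposition, and your deduction $q\widetilde{\theta}=q$ from $q\theta=q$, require $\kappa<1$. The case $\kappa=1$ is trivial, since then $\theta(a,\cdot)=q$ for every $a$. The paper's single-site proof tacitly has the same caveat.
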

\begin{proof}
	Consider random variables $\vect{A}=(A_1,A_2,\ldots,A_n)$ and $\vect{B}=(B_1,B_2,\ldots,B_n)$ generated by first drawing $\vect{A}$ according to~$\vect{p}$ and then, for each $i$, drawing $B_i$ according to $\theta(A_k,\cdot)$, independently of one another.  Note that $\vect{A}\markovto[\theta_n]\vect{B}$.
	Consider also a collection $\vect{Q}=(Q_1,Q_2,\ldots,Q_n)$ of \ac{iid} random variables, each distributed according to~$q$.
%	Consider random variables $\vect{A}=(A_1,A_2,\ldots,A_n)$, $\vect{B}=(B_1,B_2,\ldots,B_n)$, and $\vect{Q}=(Q_1,Q_2,\ldots,Q_n)$ such that
%	\begin{itemize}
%		\item $\vect{A}$ is distributed according to~$\vect{p}$,
%		\item $\vect{A}\markovto[\theta_n]\vect{B}$,
%			in particular, given $\vect{A}$, the random variables $B_1,B_2,\ldots,B_n$ are independent with $A_i\markovto[\theta] B_i$ for each~$i$.
%		\item $\vect{Q}\sim\otimes_{i=1}^n q$, that is, $Q_i$'s are \ac{iid}, each distributed according to~$q$,
%	\end{itemize}
%	$\vect{A}$ is distributed according to~$\vect{p}$,
%	$\vect{A}\markovto[\theta_n]\vect{B}$, and
%	$\vect{Q}$ is distributed according to~$\otimes_{i=1}^n q$.

	Let us use the notation
	\useshortskip
	\begin{align}
		\vect{A}_{<k} &\isdef (A_1,A_2,\ldots,A_{k-1}) \;.
	\end{align}
	Note that, for every $k$,
	\begin{align}
		\MoveEqLeft
		D\big((B_k\given \vect{B}_{<k}) \relto[\big] Q_k\big) \\
%		&=
%			D(B_k\relto Q_k) + I\big(B_k:(B_1,B_2,\ldots,B_{k-1})\big)
%				&& \text{(Observation~\ref{obs:csiszar-identity})} \\
%		&\leq
%			D(B_k\relto Q_k) + I\big(B_k:(A_1,A_2,\ldots,A_{k-1})\big)
%				&& \text{(Proposition~\ref{prop:WDPI:mutual-information})} \\
%		&=
		&\leq
			D\big((B_k\given\vect{A}_{<k}) \relto[\big] Q_k\big)
				&& \text{(Observation~\ref{obs:csiszar-identity} + Proposition~\ref{prop:WDPI:mutual-information})} \\
		&\leq
			(1-\kappa)D\big((A_k\given\vect{A}_{<k}) \relto[\big] Q_k\big) \;,
				&& \text{(Proposition~\ref{prop:SDPI})}
		\label{eq:SDPI:synchronous:proof:chain-term}
	\end{align}
	where for the second inequality, we have used the fact that, conditioned on $\vect{A}_{<k}$, we still have $A_k\markovto[\theta]B_k$.
	It follows that
	\begin{align}
		D\big(\vect{p}\theta_n \relto[\big] \otimes_{i=1}^n q\big)
		&=
			D(\vect{B} \relto[\big] \vect{Q}) \\
		&=
			\sum_{k=1}^n D\big((B_k\given\vect{B}_{<k}) \relto[\big] (Q_k\given\vect{Q}_{<k})\big)
				&& \text{(chain rule)} \\
%		&=
%			\sum_{k=1}^n D\big((B_k\given B_1,B_2,\ldots,B_{k-1}) \relto[\big] Q_k\big)
%				&& \text{(independence of $Q_i$'s)} \\
%		&\leq
%			\sum_{k=1}^n (1-\kappa)D\big((A_k\given A_1,A_2,\ldots,A_{k-1}) \relto[\big] Q_k\big)
%				&& \text{(by~\eqref{eq:SDPI:synchronous:proof:chain-term})} \\
%		&=
		&\leq
			(1-\kappa)
			\sum_{k=1}^n D\big((A_k\given\vect{A}_{<k}) \relto[\big] (Q_k\given\vect{Q}_{<k})\big)
				&& \text{(independence of $Q_i$'s + \eqref{eq:SDPI:synchronous:proof:chain-term})} \\
		&=
			(1-\kappa)
			D(\vect{A} \relto[\big] \vect{Q})
				&& \text{(chain rule)} \\
		&=
			(1-\kappa) D\big(\vect{p} \relto[\big] \otimes_{i=1}^n q\big) \;,
	\end{align}
	which proves the claim.
\end{proof}

\begin{proposition}[Strong data processing inequality for asynchronous updating]
\label{prop:SDPI:asynchronous}
	Let $\Sigma$ be a finite set.
	Let $\theta\colon\Sigma\times\Sigma\to(0,1)$ be a strictly positive stochastic matrix with stationary distribution~$q\colon\Sigma\to(0,1)$.  Let $\kappa>0$ be such that $\theta(a,b)\geq\kappa q(b)$ for each $a,b\in\Sigma$.
	Let $n$ be a positive integer.
	Then,
	\begin{equation}
		D\big(\vect{p}\widehat{\theta}_n \relto[\big] \otimes_{i=1}^n q\big) \leq (1-\sfrac{\kappa}{n}) D\big(\vect{p} \relto[\big] \otimes_{i=1}^n q\big)
	\end{equation}
	for every probability distribution $\vect{p}\colon\Sigma^n\to[0,1]$.
\end{proposition}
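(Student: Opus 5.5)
The plan is to reduce to the synchronous-type single-coordinate bound, using convexity of relative entropy together with a Han-type inequality (equivalently, a chain-rule averaging argument). Write $\vect{Q}\sim\otimes_{i=1}^n q$ and let $\widehat{\theta}^{(i)}$ denote the kernel updating only coordinate $i$ by $\theta$, so that $\widehat{\theta}_n=\frac1n\sum_i\widehat{\theta}^{(i)}$. By convexity of $D(\cdot\relto\otimes q)$ in its first argument (a special case of Lemma~\ref{lem:relative-entropy:conditioning-on-common-info}, conditioning on the uniformly chosen index), we get
\begin{equation}
	D\big(\vect{p}\widehat{\theta}_n\relto[\big]\otimes_{i=1}^n q\big)\leq\frac1n\sum_{i=1}^n D\big(\vect{p}\widehat{\theta}^{(i)}\relto[\big]\otimes_{i=1}^n q\big)\;.
\end{equation}
It therefore suffices to show that
\begin{equation}
	\sum_{i=1}^n D\big(\vect{p}\widehat{\theta}^{(i)}\relto\otimes q\big)\leq (n-\kappa)D(\vect{p}\relto\otimes q)\;.
\end{equation}

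For a single $i$, let $\vect{A}\sim\vect{p}$ and let $\vect{B}$ agree with $\vect{A}$ off coordinate $i$, with $A_i\markovto[\theta]B_i$ conditionally on $\vect{A}$. Write $\vect{A}_{\neq i}$ for the other coordinates. The chain rule, with the conditioning variable placed first, gives
\begin{equation}
	D(\vect{B}\relto\vect{Q})=D(\vect{A}_{\neq i}\relto\vect{Q}_{\neq i})+D\big((B_i\given\vect{A}_{\neq i})\relto Q_i\big)\;,
\end{equation}
using that $\vect{Q}$ is i.i.d. Conditioned on $\vect{A}_{\neq i}$, we still have $A_i\markovto[\theta]B_i$. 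Applying Proposition~\ref{prop:SDPI} to each conditional law and averaging yields
\begin{equation}
	D\big((B_i\given\vect{A}_{\neq i})\relto Q_i\big)\leq(1-\kappa)D\big((A_i\given\vect{A}_{\neq i})\relto Q_i\big)\;.
\end{equation}
Since $D(\vect{A}\relto\vect{Q})=D(\vect{A}_{\neq i}\relto\vect{Q}_{\neq i})+D\big((A_i\given\vect{A}_{\neq i})\relto Q_i\big)$, this gives
\begin{equation}
	D(\vect{p}\widehat{\theta}^{(i)}\relto\otimes q)\leq D(\vect{p}\relto\otimes q)-\kappa\,D\big((A_i\given\vect{A}_{\neq i})\relto Q_i\big)\;.
\end{equation}

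Summing over $i$, the remaining step, and the main point to verify, is the Han/Shearer-type inequality
\begin{equation}
	\sum_{i=1}^n D\big((A_i\given\vect{A}_{\neq i})\relto Q_i\big)\geq D(\vect{A}\relto\vect{Q})\;.
\end{equation}
To prove it, expand $D(\vect{A}\relto\vect{Q})=\sum_k D\big((A_k\given\vect{A}_{<k})\relto Q_k\big)$ by the chain rule. Then compare each term with $D\big((A_k\given\vect{A}_{\neq k})\relto Q_k\big)$ using Observation~\ref{obs:csiszar-identity}: the two differ by $I(A_k:\vect{A}_{\neq k})-I(A_k:\vect{A}_{<k})\geq0$, since conditioning on more variables only increases the mutual information. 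This gives the termwise inequality and hence the bound. Combining everything yields
\begin{equation}
	\frac1n\sum_{i=1}^n D(\vect{p}\widehat{\theta}^{(i)}\relto\otimes q)\leq\frac1n(n-\kappa)D(\vect{p}\relto\otimes q)=(1-\sfrac{\kappa}{n})D(\vect{p}\relto\otimes q)\;,
\end{equation}
as claimed. Infinite relative entropies are handled trivially, since the inequality is then vacuous.
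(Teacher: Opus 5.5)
Your proof is correct and follows essentially the same route as the paper. The paper also uses convexity, by conditioning on the uniformly chosen index $K$. It then bounds each coordinate term by $D(\vect{A}\relto\vect{Q})-\kappa D\big((A_i\given\vect{A}_{\neq i})\relto Q_i\big)$ using the chain rule and Proposition~\ref{prop:SDPI} applied conditionally. Finally it compares termwise, $D\big((A_i\given\vect{A}_{\neq i})\relto Q_i\big)\geq D\big((A_i\given\vect{A}_{<i})\relto Q_i\big)$, via Observation~\ref{obs:csiszar-identity}, and sums with the chain rule.

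One wording nit: the monotonicity you need is $I(A_k:\vect{A}_{\neq k})\geq I(A_k:\vect{A}_{<k})$. This holds because $\vect{A}_{<k}$ is a sub-vector, hence a function, of $\vect{A}_{\neq k}$ (Proposition~\ref{prop:WDPI:mutual-information}). It is not a statement about conditioning.
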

\begin{proof}
	Consider random variables $\vect{A}=(A_1,A_2,\ldots,A_n)$, $\vect{B}=(B_1,B_2,\ldots,B_n)$, $\vect{C}=(C_1,C_2,\ldots,C_n)$, and $K$ generated as follows.  We first draw $\vect{A}$ according to $\vect{p}$ and $K$ uniformly from $\{1,2,\ldots,n\}$ independently of each other.  For each $i$, we then draw $B_i$ according to $\theta(A_i,\cdot)$ independently of one another, and let
	\begin{equation}
		C_i \isdef
			\begin{cases}
				B_i			& \text{if $K=i$,}\\
				A_i			& \text{otherwise.}
			\end{cases}
	\end{equation}
	Clearly, $\vect{A}\markovto[\widehat{\theta}_n]\vect{C}$.
	We also draw a collection $\vect{Q}=(Q_1,Q_2,\ldots,Q_n)$ of samples from $q$ that are independent of one another and of~$K$.
	
%	First, let us verify that
%	\begin{equation}
%		D(\vect{C}\relto\vect{Q}) \leq D\big((\vect{C}\given K)\relto[\big](\vect{Q}\given K)\big)
%		\label{eq:SDPI:asynchronous:proof:condition-on-K}
%	\end{equation}
%	Breaking down $D\big((\vect{C},K)\relto[\big](\vect{Q},K)\big)$ using the chain rule in two different ways gives
%	\begin{equation}
%		D(\vect{C}\relto\vect{Q}) + D\big((K\given\vect{C})\relto[\big](K\given\vect{Q})\big) =
%		D(K\relto K) + D\big((\vect{C}\given K)\relto[\big](\vect{Q}\given K)\big) \;.
%	\end{equation}
%	Since $D(K\relto K)=0$ and $D\big((K\given\vect{C})\relto[\big](K\given\vect{Q})\big)\geq 0$, we obtain the claimed inequality.
	
	Let us use the notation
	\useshortskip
	\begin{align}
		\vect{A}_{\neq i} &\isdef (A_1,\ldots,A_{i-1},A_{i+1},\ldots,A_n) \;, \\
		\vect{A}_{< i} &\isdef (A_1,A_2,\ldots,A_{i-1}) \;.
	\end{align}
	Note that, for each $i$,
	\begin{align}
		\MoveEqLeft
		D\big((A_1,\ldots,A_{i-1},B_i,A_{i+1},\ldots,A_n)\relto[\big]\vect{Q}\big) \\
		&=
			D\big(\vect{A}_{\neq i}\relto[\big]\vect{Q}_{\neq i}\big)
			+
			D\big((B_i\given\vect{A}_{\neq i})\relto[\big](Q_i\given\vect{Q}_{\neq i})\big)
				&& \text{(chain rule)} \\
		&=
			D\big(\vect{A}_{\neq i}\relto[\big]\vect{Q}_{\neq i}\big)
			+
			D\big((B_i\given\vect{A}_{\neq i})\relto[\big] Q_i\big)
				&& \text{(independence of $Q_i$'s)} \\
		&\leq
			D\big(\vect{A}_{\neq i}\relto[\big]\vect{Q}_{\neq i}\big)
			+
			(1-\kappa)D\big((A_i\given\vect{A}_{\neq i})\relto[\big] Q_i\big) \;,
				&& \text{(Proposition~\ref{prop:SDPI})} \\
%		&=
%			\begin{multlined}[t]
%				D\big(\vect{A}_{\neq i}\relto[\big]\vect{Q}_{\neq i}\big)
%				+
%				D\big((A_i\given\vect{A}_{\neq i})\relto[\big](Q_i\given\vect{Q}_{\neq i})\big) \\
%				-
%				\kappa D\big((A_i\given\vect{A}_{\neq i})\relto[\big] Q_i\big)
%			\end{multlined}
%				&& \text{(independence of $Q_i$'s)} \\
		&=
			D(\vect{A}\relto[\big]\vect{Q}) - \kappa D\big((A_i\given\vect{A}_{\neq i})\relto[\big] Q_i\big)
				&& \text{(chain rule + independence of $Q_i$'s)}
		\label{eq:SDPI:asynchronous:proof:chain-term}
%		\\
%		&=
%			D(\vect{A}\relto[\big]\vect{Q})
%			- \kappa\Big[D(A_i\relto Q_i) + I\big(A_i:\vect{A}_{\neq i}\big)\Big]
%				&& \text{(Observation~\ref{obs:csiszar-identity})} \\
%		&\leq
%			D(\vect{A}\relto[\big]\vect{Q})
%			- \kappa\Big[D(A_i\relto Q_i) + I\big(A_i:\vect{A}_{<i}\big)\Big]
%				&& \text{(Proposition~\ref{prop:WDPI:mutual-information})} \\
%		&=
%			D(\vect{A}\relto[\big]\vect{Q}) - \kappa D\big((A_i\given\vect{A}_{<i})\relto[\big] Q_i\big) \;,
%				&& \text{(Observation~\ref{obs:csiszar-identity})}
	\end{align}
	where for the inequality, we have used the fact that, conditioned on $\vect{A}_{\neq i}$, we still have $A_i\markovto[\theta]B_i$.
	
	We can now write
	\begin{align}
		D\big(\vect{p}\widehat{\theta}_n \relto[\big] \otimes_{i=1}^n q\big)
		&=
			D(\vect{C}\relto\vect{Q}) \\
		&\leq
			D\big((\vect{C}\given K)\relto[\big](\vect{Q}\given K)\big)
				%&& \text{(by~\eqref{eq:SDPI:asynchronous:proof:condition-on-K})} \\
				&& \text{(Lemma~\ref{lem:relative-entropy:conditioning-on-common-info})} \\
		&=
			\sum_{i=1}^n\PP(K=i)D\big((\vect{C}\given K=i)\relto[\big](\vect{Q}\given K=i)\big) \\
		&=
			\frac{1}{n}\sum_{i=1}^n D\big((A_1,\ldots,A_{i-1},B_i,A_{i+1},\ldots,A_n)\relto[\big]\vect{Q}\big)
				&& \text{(definition of $C_i$'s)} \\
		&\leq
			\frac{1}{n}\sum_{i=1}^n
			\Big[
				D(\vect{A}\relto\vect{Q}) - \kappa D\big((A_i\given\vect{A}_{\neq i})\relto[\big] Q_i\big)
			\Big]
				&& \text{(by~\eqref{eq:SDPI:asynchronous:proof:chain-term})} \\
		&\leq
			\frac{1}{n}\sum_{i=1}^n
			\Big[
				D(\vect{A}\relto\vect{Q}) - \kappa D\big((A_i\given\vect{A}_{<i})\relto[\big] Q_i\big)
			\Big]
				&& \text{(Observation~\ref{obs:csiszar-identity} + Proposition~\ref{prop:WDPI:mutual-information})} \\
%		&=
%			D(\vect{A}\relto\vect{Q})
%			- \frac{\kappa}{n}\sum_{i=1}^n D\big((A_i\given\vect{A}_{<i})\relto[\big] Q_i\big) \\
		&=
			D(\vect{A}\relto\vect{Q})
			- \frac{\kappa}{n}D(\vect{A}\relto\vect{Q})
				&& \text{(chain rule + independence of $Q_i$'s)} \\
		&=
			(1-\sfrac{\kappa}{n})D\big(\vect{p} \relto[\big] \otimes_{i=1}^n q\big) \;,
	\end{align}
	proving the proposition.
\end{proof}

\section{PCA with stationary Bernoulli measures}
\label{sec:PCA:ergodicity}

\subsection{Noise decomposition}    % Or: Perturbative representation
\label{sec:PCA:noise-decomposition}

Let $\Phi$ be a $d$-dimensional PCA with a strictly positive local transition rule $\varphi\colon\Sigma^N\times\Sigma\to(0,1)$.
Let $\lambda_q=\otimes_{i\in\ZZ^d}q$ be a Bernoulli measure with marginal~$q$, and suppose that $\lambda_q$ is stationary under~$\Phi$.

The starting idea in the proof of Theorem~\ref{thm:main:PCA:ergodicity}, inspired by earlier results on random perturbations of deterministic cellular automata~\cite{MST2019,Taati2021}, is to represent $\Phi$ as a perturbation of another PCA with a zero-range, memoryless noise, in such a way that the new PCA and the noise both preserve~$\lambda_q$.  

Let $\kappa>0$ be such that $\varphi(a_N,b)\geq\kappa q(b)$ for every $a_N\in\Sigma^N$ and $b\in\Sigma$.  As in the proof of Proposition~\ref{prop:SDPI}, we can decompose $\varphi$ as
\begin{equation}
	\varphi(a_N,b) = \kappa q(b) + (1-\kappa)\psi(a_N,b) \;,
\end{equation}
where $\psi\colon\Sigma\times\Sigma\to[0,1]$ is another local transition rule.
This decomposition can be interpreted as follows:
In order to draw a sample from $\varphi(a_N,\cdot)$, we can first flip a coin with parameter~$\kappa$.  If the coin comes up heads, we draw a sample from~$q$; %(independently of $a_N$ and the coin);
otherwise, we draw a sample from $\psi(a_N,\cdot)$.
Alternatively, we can reinterpret this as first taking a sample from $\psi(a_N,\cdot)$ and then subjecting the result to zero-range, memoryless noise with error probability $\kappa$ and replacement distribution~$q$.

More specifically, let $\Psi$ be the PCA with local transition rule~$\psi$, and let $\Theta_{\kappa,q}$ denote the zero-range PCA with local transition rule $\theta\colon\Sigma\times\Sigma\to[0,1]$ defined by $\theta(a,b)\isdef(1-\kappa)\indicator{a}(b)+\kappa q(b)$.  We refer to $\Theta_{\kappa,q}$ as the \emph{zero-range, memoryless noise} with \emph{error probability} $\kappa$ and \emph{replacement distribution}~$q$.

\begin{observation}[Noise decomposition]
\label{obs:PCA:noise-decomposition}
	$\Phi=\Psi\Theta_{\kappa,q}$.
\end{observation}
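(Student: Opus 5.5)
The identity $\Phi=\Psi\Theta_{\kappa,q}$ is an equality of transition kernels on $\Sigma^{\ZZ^d}$. Since cylinders together with the empty set form a semi-algebra generating the Borel $\sigma$-algebra, it suffices to check that $\Phi(x,[w])=(\Psi\Theta_{\kappa,q})(x,[w])$ for every $x\in\Sigma^{\ZZ^d}$ and every pattern $w\in\Sigma^J$ with $J\Subset\ZZ^d$. Both sides are then probability measures agreeing on a generating $\pi$-system, so they coincide.

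We first check that $\psi$ is a genuine stochastic matrix. Set $\psi(a_N,b)\isdef\big(\varphi(a_N,b)-\kappa q(b)\big)/(1-\kappa)$. This is nonnegative by the choice of $\kappa$, and it sums to $1$ over $b$. Here $\kappa<1$ automatically unless $\varphi(a_N,\cdot)=q$ for all $a_N$; in that degenerate case we may shrink $\kappa$ slightly, or note that $\Phi=\Theta_{1,q}$ directly.

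For the composition, we use the fact that $\Theta_{\kappa,q}$ acts independently site by site. Writing $y$ for the intermediate configuration, we have
\begin{equation}
	(\Psi\Theta_{\kappa,q})(x,[w]) = \int \prod_{k\in J}\theta(y_k,w_k)\,\Psi(x,\dd y) \;.
\end{equation}
The integrand depends only on $y_J$, and under $\Psi(x,\cdot)$ the coordinates $(y_k)_{k\in J}$ are independent with laws $\psi\big((\sigma^k x)_N,\cdot\big)$. Hence the integral factorizes as
\begin{equation}
	\prod_{k\in J}\sum_{c\in\Sigma}\psi\big((\sigma^k x)_N,c\big)\big[(1-\kappa)\indicator{c}(w_k)+\kappa q(w_k)\big]
	= \prod_{k\in J}\Big[(1-\kappa)\psi\big((\sigma^k x)_N,w_k\big)+\kappa q(w_k)\Big] \;.
\end{equation}
By the decomposition of $\varphi$, this equals $\prod_{k\in J}\varphi\big((\sigma^k x)_N,w_k\big)=\Phi(x,[w])$.

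There is no real obstacle here. The only points requiring care are justifying the factorization, which follows from the product structure of both kernels, and the extension from cylinders to all Borel sets, which follows from the $\pi$-$\lambda$ theorem. As a remark that will be needed later, $\Theta_{\kappa,q}$ preserves $\lambda_q$ because $q\theta=q$. It then follows that $\Psi$ also preserves $\lambda_q$: from $\lambda_q\Psi\Theta_{\kappa,q}=\lambda_q$ we get $\lambda_q\Psi=(1-\kappa)^{-1}\big(\lambda_q-\kappa\lambda_q\big)$ on cylinders after unfolding the noise, and this could be verified in the same way.
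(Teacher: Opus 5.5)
Your proof is correct and follows the paper's route. The paper justifies the observation by the two-stage sampling interpretation (sample from $\psi(a_N,\cdot)$, then apply the site-wise noise $\theta$), and your cylinder computation, which rests on the single-site identity $\sum_{c}\psi(a_N,c)\theta(c,b)=\varphi(a_N,b)$ and the product structure of both kernels, is the rigorous version of that argument.

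One caveat concerns your closing remark, which is not part of this statement. $\Theta_{\kappa,q}$ does not act as $\mu\mapsto(1-\kappa)\mu+\kappa\lambda_q$ on multi-site cylinders, so the ``unfolding'' you sketch does not work as written. The paper instead obtains $\lambda_q\Psi=\lambda_q$ from $\lambda_q\Psi\Theta_{\kappa,q}=\lambda_q\Theta_{\kappa,q}$, using injectivity of $\mu\mapsto\mu\Theta_{\kappa,q}$ via the site-wise inverse of $\theta$ (Lemma~\ref{lem:noise-kernel:injectivity}).
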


% Clearly, the noise component $\Theta_{\kappa,q}$ preserves the Bernoulli measure~$\lambda_q$.
% \begin{lemma}
% \label{lem:PCA:decomposition:invariance}
% 	If $\Phi$ preserves the Bernoulli measure~$\lambda_q$ and $\kappa<1$, then so does~$\Psi$.
% \end{lemma}
% \begin{proof}
% 	\comment{To be added!}
% \end{proof}

The kernel $\Theta_{\kappa,q}$ acts injectively on probability measures.
\begin{lemma}[Injectivity of noise kernel]
\label{lem:noise-kernel:injectivity}
    %Let $q\colon\Sigma\to[0,1]$ be a probability distribution and $0<\kappa<1$.
    If $\kappa<1$, the map $\mu\mapsto\mu\Theta_{\kappa,q}$ is one-to-one on $\family{P}(\Sigma^{\ZZ^d})$.
\end{lemma}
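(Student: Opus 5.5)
Since two probability measures on $\Sigma^{\ZZ^d}$ coincide as soon as they agree on every cylinder, it suffices to show that, for each finite $J\Subset\ZZ^d$, the marginal $\mu_J$ can be recovered from the marginal $(\mu\Theta_{\kappa,q})_J$. The noise is zero-range, so
\begin{equation}
	(\mu\Theta_{\kappa,q})_J = \mu_J\,\theta_{\abs{J}} \;,
\end{equation}
where $\theta_{\abs{J}}$ is the product stochastic matrix on $\Sigma^J$ built from $\theta(a,b)=(1-\kappa)\indicator{a}(b)+\kappa q(b)$, as in Section~\ref{sec:non-interacting-MCs}. The claim therefore reduces to a finite-dimensional statement: $\theta_{\abs{J}}$, viewed as a linear map on signed measures on $\Sigma^J$, is invertible.

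Because $\theta_{\abs{J}}=\theta^{\otimes J}$ is a Kronecker power, it is enough to prove that the single-site matrix $\theta$ is invertible. Write
\begin{equation}
	\theta = (1-\kappa)I + \kappa\,\mathbf{1}q \;,
\end{equation}
where $\mathbf{1}q$ is the rank-one matrix whose rows all equal $q$. This matrix is idempotent, since $q\mathbf{1}=1$, so its eigenvalues are $0$ and $1$. Hence the eigenvalues of $\theta$ are $1-\kappa$ and $1$, both nonzero when $\kappa<1$. For an explicit inverse, one can check
\begin{equation}
	\theta^{-1}=\frac{1}{1-\kappa}\Big(I-\kappa\,\mathbf{1}q\Big) \;,
\end{equation}
using $(\mathbf{1}q)^2=\mathbf{1}q$. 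The Kronecker power of invertible matrices is invertible, so $\mu_J=(\mu\Theta_{\kappa,q})_J\,(\theta^{-1})^{\otimes J}$.

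It follows that if $\mu\Theta_{\kappa,q}=\nu\Theta_{\kappa,q}$, then $\mu_J=\nu_J$ for every finite $J$, hence $\mu([w])=\nu([w])$ for every cylinder and $\mu=\nu$. There is no real obstacle. The only point needing care is the identification $(\mu\Theta_{\kappa,q})_J=\mu_J\theta_{\abs{J}}$, which follows directly from the product form of the PCA kernel with neighbourhood $N=\{0\}$.
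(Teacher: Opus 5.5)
Your proof is correct and follows essentially the same route as the paper: write $\theta=(1-\kappa)I+\kappa Q$ with $Q$ idempotent, give the explicit inverse $\frac{1}{1-\kappa}(I-\kappa Q)$, pass to the tensor product on each finite region, and conclude because the two measures agree on every cylinder. The identification $(\mu\Theta_{\kappa,q})_J=\mu_J\theta_{\abs{J}}$, which you state explicitly, is a step the paper leaves implicit.
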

\begin{proof}
    The noise matrix can be written as $\theta=(1-\kappa)I+\kappa Q$, where $I$ is the identity matrix with rows and columns indexed by~$\Sigma$, and $Q$ is the $\Sigma\times\Sigma$ matrix with vector~$q$ on each row.
    Observe that $Q$ is idempotent, that is, $Q^2=Q$.  One can now verify that $\theta$ is invertible with inverse
    \begin{equation}
        \theta^{-1} = \frac{1}{1-\kappa}I-\frac{\kappa}{(1-\kappa)}Q \;.
    \end{equation}
    Since any tensor product of invertible matrices is again invertible, for any finite region $A\Subset\ZZ^2$, the noise matrix $\theta_A(u,v)\isdef\prod_{i\in A}\theta(u_i,v_i)$ induced by $\theta$ on $\Sigma^A$ is again invertible.
    %It follows that the map $\mu\mapsto\mu\Theta_{\kappa,q}$ on $\family{P}(\Sigma^{\ZZ^2})$ is one-to-one.
    Now, let $\mu_1,\mu_2\in\family{P}(\Sigma^{\ZZ^d})$ be such that $\mu_1\Theta_{\kappa,q}=\mu_2\Theta_{\kappa,q}$.
    Then, $\mu_1$ and $\mu_2$ agree on every cylinder, hence $\mu_1=\mu_2$.
\end{proof}

From the latter lemma, and the fact that the noise $\Theta_{\kappa,q}$ preserves the Bernoulli measure $\lambda_q$, it immediately follows that:
\begin{proposition}[Stationarity for noise decomposition]
\label{prop:PCA:decomposition:invariance}
    Suppose $\Phi=\Psi\Theta_{\kappa,q}$ for some kernel $\Psi$ and some $\kappa<1$.
	If $\Phi$ preserves the Bernoulli measure~$\lambda_q$, then so does~$\Psi$.
\end{proposition}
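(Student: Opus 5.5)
The claim follows by combining the stationarity of $\lambda_q$ under the noise with the injectivity of the noise kernel (Lemma~\ref{lem:noise-kernel:injectivity}). The idea is to show that $\lambda_q\Psi$ and $\lambda_q$ have the same image under $\Theta_{\kappa,q}$, and then cancel the noise.

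First, we check that the noise preserves $\lambda_q$, i.e., $\lambda_q\Theta_{\kappa,q}=\lambda_q$. Since $\Theta_{\kappa,q}$ acts independently on each site with the single-site matrix $\theta=(1-\kappa)I+\kappa Q$, it suffices to check this on cylinders. On a cylinder $[w]$ with $w\in\Sigma^A$, the marginal of $\lambda_q\Theta_{\kappa,q}$ on $A$ is the tensor product over $i\in A$ of $q\theta$. Because every row of $Q$ equals $q$ and $q$ sums to one, we have $qQ=q$. Hence
\begin{equation}
    q\theta=(1-\kappa)q+\kappa q=q,
\end{equation}
so the two measures agree on every cylinder and therefore coincide.

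Next, we use the hypothesis together with the decomposition $\Phi=\Psi\Theta_{\kappa,q}$:
\begin{equation}
    (\lambda_q\Psi)\Theta_{\kappa,q}=\lambda_q\Phi=\lambda_q=\lambda_q\Theta_{\kappa,q}.
\end{equation}
Here $\lambda_q\Psi$ is a probability measure on $\Sigma^{\ZZ^d}$, since $\Psi$ is a probability kernel. Since $\kappa<1$, Lemma~\ref{lem:noise-kernel:injectivity} says that $\mu\mapsto\mu\Theta_{\kappa,q}$ is one-to-one on $\family{P}(\Sigma^{\ZZ^d})$. Therefore $\lambda_q\Psi=\lambda_q$, which is the claim.

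There is no real obstacle; the only points needing care are the two facts used above. The first is that the composition $\Psi\Theta_{\kappa,q}$ acts on measures as $\mu\mapsto(\mu\Psi)\Theta_{\kappa,q}$, which is associativity of kernel composition. The second is that $\lambda_q\Psi$ is indeed a probability measure, so that it lies in the domain where injectivity applies.
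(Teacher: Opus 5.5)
Your proof is correct and follows the paper's own argument. The paper derives the proposition directly from the stationarity of $\lambda_q$ under $\Theta_{\kappa,q}$ together with the injectivity of $\mu\mapsto\mu\Theta_{\kappa,q}$ (Lemma~\ref{lem:noise-kernel:injectivity}), which is your cancellation of $(\lambda_q\Psi)\Theta_{\kappa,q}=\lambda_q\Theta_{\kappa,q}$; your check that $q\theta=q$ fills in the step the paper treats as evident.
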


\subsection{Evolution of relative entropy}
\label{sec:PCA:entropy-evolution}
In this section, we provide a simple proof of Theorem~\ref{thm:main:PCA:ergodicity}.
In view of the noise decomposition of~$\Phi$ (Observation~\ref{obs:PCA:noise-decomposition} and Proposition~\ref{prop:PCA:decomposition:invariance}), we start by examining the effects of each component on relative entropy separately.

\begin{lemma}[Local diffusion of entropy] %[Effect of a PCA on entropy]
\label{lem:PCA:entropy-diffusion}
	%Let $\Phi$ be a $d$-dimensional PCA with local transition rule $\varphi\colon\Sigma^N\times\Sigma\to[0,1]$,
	Let $\Psi$ be a PCA with dependence neighbourhood $N\Subset\ZZ^d$ where $0\in N$,
	and suppose that $\Psi$ preserves a Bernoulli measure~$\lambda=\lambda_q$.
	Then,
%	\begin{align}
%		D\big((\mu\Phi)_J\relto[\big]\lambda_J\big) &\leq
%			D\big(\mu_{N(J)}\relto[\big]\lambda_{N(J)}\big) \\
%		&\leq
%			D\big(\mu_J\relto[\big]\lambda_J\big) + \abs{\partial N(J)}\log(\sfrac{1}{q_\smallest})
%	\end{align}
	\begin{equation}
		D_J(\mu\Psi\relto\lambda) \leq
			D_{N(J)}(\mu\relto\lambda)
			\label{eq:PCA:entropy-diffusion:1}
			%\tag{\sun:1}\\
		% &\leq
		% 	D_J(\mu\relto\lambda) + \abs{\partial N(J)}\log(\sfrac{1}{q_\smallest})
		% 	\label{eq:PCA:entropy-diffusion:2}
		% 	\tag{\sun:2}
	\end{equation}
	for every probability measure~$\mu$ and every finite set $J\subseteq\ZZ^d$,
	where $q_\smallest\isdef\min\{q(a): a\in\Sigma\}$.
\end{lemma}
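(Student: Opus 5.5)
The inequality is the weak data processing inequality (Proposition~\ref{prop:WDPI}) applied to a finite-dimensional marginal of $\Psi$, combined with the stationarity of $\lambda$. The key point is locality: the marginal of $\mu\Psi$ on $J$ depends on $\mu$ only through its marginal on $N(J)$, and it does so through a fixed stochastic matrix.

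First, I will write the marginal kernel explicitly. For $u\in\Sigma^{N(J)}$ and $w\in\Sigma^J$, set
\begin{equation}
	\psi_J(u,w) \isdef \prod_{k\in J}\psi\big((u_{k+i})_{i\in N}, w_k\big) \;,
\end{equation}
which is a stochastic matrix from $\Sigma^{N(J)}$ to $\Sigma^J$. It is well defined because $k+i\in N(J)$ for every $k\in J$ and $i\in N$. By the definition of the global kernel,
\begin{equation}
	\Psi(x,[w])=\psi_J\big(x_{N(J)},w\big) \;,
\end{equation}
and integrating against $\mu$ gives $(\mu\Psi)_J=\mu_{N(J)}\psi_J$.

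The same computation with $\lambda$ in place of $\mu$ gives $(\lambda\Psi)_J=\lambda_{N(J)}\psi_J$. Since $\lambda\Psi=\lambda$, this yields $\lambda_J=\lambda_{N(J)}\psi_J$.

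Proposition~\ref{prop:WDPI}, applied with the finite sets $\Sigma^{N(J)}$ and $\Sigma^J$ and the matrix $\psi_J$, then gives
\begin{equation}
	D_J(\mu\Psi\relto\lambda)=D\big(\mu_{N(J)}\psi_J\relto[\big]\lambda_{N(J)}\psi_J\big)\leq D\big(\mu_{N(J)}\relto[\big]\lambda_{N(J)}\big)=D_{N(J)}(\mu\relto\lambda) \;,
\end{equation}
which is the claim.

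There is no real obstacle; the only care needed is the bookkeeping of the marginal kernel, in particular checking that $\psi_J$ genuinely depends only on $x_{N(J)}$. The hypothesis $0\in N$ is not used for this inequality. It only guarantees $J\subseteq N(J)$, which will matter later when $D_{N(J)}$ is compared with $D_J$ plus a boundary term via Lemma~\ref{lem:relative-entropy:bound}.
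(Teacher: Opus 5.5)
Your proof is correct and follows the paper's argument: the paper simply cites the weak data processing inequality (Proposition~\ref{prop:WDPI}), and you have supplied the details it leaves implicit, namely the marginal kernel $\psi_J$ from $\Sigma^{N(J)}$ to $\Sigma^J$ and the identity $\lambda_J=\lambda_{N(J)}\psi_J$ obtained from stationarity. Your remark that the hypothesis $0\in N$ is not needed here is accurate, and the same holds for the product structure of $\lambda$, since only $\lambda\Psi=\lambda$ is used.
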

\begin{proof}
    This is an application of the weak data processing inequality~(Proposition~\ref{prop:WDPI}).
\end{proof}
% \begin{proof}
% 	The first inequality is an application of the weak data processing inequality~(Proposition~\ref{prop:WDPI}).
% %	To verify the second inequality, consider random variables $X_{N(J)}\sim\mu_{N(J)}$ and $Z_{N(J)}\sim\lambda_{N(J)}$.
% %	
% %	Let $k_1,k_2,\ldots,k_m$ be an enumeration of the elements of $\partial N(J)$.
% %	Then, by the chain rule and the independence of $Z_k$'s,
% %	\begin{align}
% %		%\MoveEqLeft
% %		%D\big(\mu_{N(J)}\relto[\big]\lambda_{N(J)}\big)
% %		D_{N(J)}(\mu\relto\lambda)
% %		&=
% %			D\big(X_{N(J)}\relto[\big]Z_{N(J)}\big) \\
% %		&=
% %			D\big(X_J\relto[\big]Z_J\big)
% %			+ \sum_{i=1}^m D\big((X_J,X_{k_i}\given X_{k_1},X_{k_2},\ldots,X_{k_{i-1}})\relto[\big]Z_{k_i}\big)
% %%				&& \text{(chain rule)} \\
% %%		&\leq
% %%			D\big(X_J\relto[\big]Z_J\big)
% %%			+ \sum_{i=1}^m \log\frac{1}{q_{\smallest}}
% %	\end{align}
% %	But since $Z_{k_i}\sim q$, we have
% %	\begin{equation}
% %		D\big((X_{k_i}\given X_{k_1},X_{k_2},\ldots,X_{k_{i-1}})\relto[\big]Z_{k_i}\big)
% %			\leq \log(\sfrac{1}{q_\smallest}) \;.
% %	\end{equation}
% %	The second inequality now follows.
% 	The second inequality follows from the chain rule and Lemma~\ref{lem:relative-entropy:bound}. \irene{Is the second line useful for what comes next? If yes, specify the notation $\partial N(J)$? And add one more line in the inequalities to separate the chain rule and Lemma 2.5?}
% \end{proof}

\begin{lemma}[Entropy decay]  %[Effect of noise on entropy]
\label{lem:noise:contraction}
	Let $\Theta_{\kappa,q}$ be the global kernel of a memoryless noise
	with error probability~$\kappa>0$ and replacement distribution~$q$,
	and let $\lambda=\lambda_q$ be the Bernoulli measure with marginal~$q$.
	Then, %for every finite set $J\Subset\ZZ^d$, we have
	\begin{equation}
		D_J(\mu\Theta_{\kappa,q},\lambda)
		\leq
			(1-\kappa) D_J(\mu,\lambda)
	\end{equation}
	for every probability measure~$\mu$ and every finite set $J\subseteq\ZZ^d$.
\end{lemma}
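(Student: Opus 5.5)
The plan is to reduce the statement to the synchronous strong data processing inequality, Proposition~\ref{prop:SDPI:synchronous}, applied on the finite set $J$. The first step is to note that the noise $\Theta_{\kappa,q}$ is zero-range. Its local rule $\theta(a,b)=(1-\kappa)\indicator{a}(b)+\kappa q(b)$ depends only on the current symbol at the site itself. Hence the marginal of $\mu\Theta_{\kappa,q}$ on $J$ is determined by $\mu_J$ alone:
\begin{equation}
	(\mu\Theta_{\kappa,q})_J = \mu_J\,\theta_J \;,
\end{equation}
where $\theta_J(u,v)\isdef\prod_{i\in J}\theta(u_i,v_i)$ is the product matrix on $\Sigma^J$. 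Enumerating $J$ as $\{k_1,\ldots,k_n\}$ identifies $\theta_J$ with the matrix $\theta_n$ of Section~\ref{sec:non-interacting-MCs}. Likewise, $\lambda_J$ is identified with $\otimes_{i=1}^n q$. To verify the displayed identity, I would check it on cylinders $[w]$ with $w\in\Sigma^J$, directly from the product formula for the global kernel of a PCA.

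The second step is to check the hypotheses of Proposition~\ref{prop:SDPI:synchronous}. The distribution $q$ is stationary for $\theta$, since
\begin{equation}
	q\theta=(1-\kappa)q+\kappa q=q \;.
\end{equation}
The lower bound $\theta(a,b)\geq\kappa q(b)$ holds trivially. The proposition then gives
\begin{equation}
	D(\mu_J\theta_J\relto\lambda_J)\leq(1-\kappa)D(\mu_J\relto\lambda_J) \;,
\end{equation}
which is exactly the claimed inequality.

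The only genuine obstacle is that Proposition~\ref{prop:SDPI:synchronous} and Proposition~\ref{prop:SDPI} are stated for strictly positive $\theta$ with $q$ strictly positive. Here $\theta$ may fail to be strictly positive, for instance when $\kappa<1$ and $q$ has zeros, or when $\kappa=1$.

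Rather than relying on positivity, I would observe that positivity is never used in those proofs. The proof of Proposition~\ref{prop:SDPI} only uses the decomposition $\theta=\kappa q+(1-\kappa)\widetilde\theta$; here it holds with $\widetilde\theta=I$. The subsequent steps use only Lemma~\ref{lem:relative-entropy:conditioning-on-common-info}, the chain rule, Proposition~\ref{prop:WDPI}, and Observation~\ref{obs:csiszar-identity} together with Proposition~\ref{prop:WDPI:mutual-information}. All of these hold for arbitrary stochastic matrices, with the convention that both sides may be $+\infty$.

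If one prefers a self-contained argument instead, the coupling is direct. Draw $X_J\sim\mu_J$, i.i.d.\ coins $E_i\sim\mathrm{Bernoulli}(\kappa)$, and $Q_J\sim\lambda_J$, all independent. Set $Y_i=Q_i$ if $E_i=\symb{1}$ and $Y_i=X_i$ otherwise. Conditioning on the coins via Lemma~\ref{lem:relative-entropy:conditioning-on-common-info} gives
\begin{equation}
	D(Y_J\relto Q_J)\leq \mathbb{E}\bigl[D_{S}(\mu\relto\lambda)\bigr] \;,
\end{equation}
where $S=\{i\in J:E_i=\symb{0}\}$ is the random set of sites not resampled. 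This uses that, given the coins, $Y_J$ consists of $X_S$ together with independent fresh $q$-samples on $J\setminus S$.

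It then remains to bound $\mathbb{E}\bigl[D_S(\mu\relto\lambda)\bigr]\leq(1-\kappa)D_J(\mu\relto\lambda)$ for the random subset $S$ that contains each site independently with probability $1-\kappa$. I would do this via the chain rule along a fixed ordering of $J$. For each $i$, the term
\begin{equation}
	D\big((X_i\given X_{S\cap J_{<i}})\relto[\big]Q_i\big)
\end{equation}
is at most the corresponding term conditioned on all of $X_{J_{<i}}$. This follows from Observation~\ref{obs:csiszar-identity}, since mutual information is monotone in the conditioning set. Averaging over whether $i\in S$ then yields the factor $1-\kappa$, summing to $(1-\kappa)D_J(\mu\relto\lambda)$ by the chain rule.
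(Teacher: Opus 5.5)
Your proposal is correct and is the paper's own argument: the paper's proof is simply the reduction to Proposition~\ref{prop:SDPI:synchronous} via $(\mu\Theta_{\kappa,q})_J=\mu_J\theta_J$, which you spell out. Your positivity concern only matters when $q$ has zeros. When $q>0$, which holds in the paper's application since $q$ is the marginal of a stationary measure of a positive-rate PCA, one has $\theta(a,b)\geq\kappa q(b)>0$ even for $\kappa=1$. For the case where $q$ has zeros, both your remark that positivity is never used in those proofs and your random-erasure argument are sound.
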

\begin{proof}
	This is an application of the strong data processing inequality for non-interacting Markov chains with synchronous updating (Proposition~\ref{prop:SDPI:synchronous}).
\end{proof}

Putting Observation~\ref{obs:PCA:noise-decomposition} and Proposition~\ref{prop:PCA:decomposition:invariance}, \eqref{eq:PCA:entropy-diffusion:1} in Lemma~\ref{lem:PCA:entropy-diffusion}, and Lemma~\ref{lem:noise:contraction} together gives the following.
\begin{proposition}[Local entropy diffusion with decay] %[Effect of a positive-rate PCA on entropy]
\label{prop:PCA:relative-entropy:step}
	Let $\Phi$ be a PCA with a strictly positive local transition rule $\varphi\colon\Sigma^N\times\Sigma\to(0,1)$, where $N\ni 0$.
	Suppose that $\Phi$ admits a stationary Bernoulli measure $\lambda=\lambda_q$.
	Then,
	\begin{equation}
		D_J(\mu\Phi\relto\lambda) \leq
			(1-\kappa)D_{N(J)}(\mu\relto\lambda)
	\end{equation}
	for every probability measure~$\mu$ and every finite set $J\subseteq\ZZ^d$, where
	$\kappa \isdef \min\big\{\varphi(a_N,b)/q(b): a_N\in\Sigma^N,b\in\Sigma\big\}$ and
	%$c_1\isdef (1-\kappa)\log(\sfrac{1}{q_\smallest})$, and
	$q_\smallest\isdef\min\{q(a): a\in\Sigma\}$.
\end{proposition}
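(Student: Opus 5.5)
The plan is to combine the noise decomposition with the two one-step entropy estimates. Take $\kappa$ as in the statement. Since $\lambda_q$ is stationary and $\varphi$ is strictly positive, we have $\kappa\le 1$: summing $\varphi(a_N,b)\ge\kappa q(b)$ over $b$ gives $1\ge\kappa$. Also $\kappa>0$, and $\varphi(a_N,b)\geq\kappa q(b)$ holds for all $a_N,b$.

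First we dispose of the case $\kappa=1$. Then $\varphi(a_N,\cdot)=q$ for every $a_N$, so $\mu\Phi=\lambda$ for every $\mu$. Hence $D_J(\mu\Phi\relto\lambda)=0$ and the inequality is trivial.

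So assume $\kappa<1$. Write $\varphi=\kappa q+(1-\kappa)\psi$ with $\psi$ a stochastic local rule on the same neighbourhood $N$. By Observation~\ref{obs:PCA:noise-decomposition}, $\Phi=\Psi\Theta_{\kappa,q}$. Proposition~\ref{prop:PCA:decomposition:invariance} applies because $\kappa<1$, and it shows that $\Psi$ preserves $\lambda$.

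Now fix $\mu$ and $J$. Apply Lemma~\ref{lem:noise:contraction} to the measure $\mu\Psi$, then Lemma~\ref{lem:PCA:entropy-diffusion} to $\Psi$, which is legitimate since $\Psi$ preserves $\lambda$ and $0\in N$:
\begin{equation}
	D_J(\mu\Phi\relto\lambda)=D_J\big((\mu\Psi)\Theta_{\kappa,q}\relto\lambda\big)\le(1-\kappa)D_J(\mu\Psi\relto\lambda)\le(1-\kappa)D_{N(J)}(\mu\relto\lambda)\;.
\end{equation}

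The only point needing care is the applicability of the two lemmas. For Lemma~\ref{lem:noise:contraction} (via Proposition~\ref{prop:SDPI:synchronous}), the zero-range kernel $\theta(a,b)=(1-\kappa)\indicator{a}(b)+\kappa q(b)$ is strictly positive because $q>0$ and $\kappa>0$. It has stationary distribution $q$ and satisfies $\theta(a,b)\ge\kappa q(b)$. Its restriction to $J$ is exactly $\theta_{|J|}$ acting on $\mu_J$.

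For Lemma~\ref{lem:PCA:entropy-diffusion}, the marginal $(\mu\Psi)_J$ is the image of $\mu_{N(J)}$ under a stochastic matrix $\Sigma^{N(J)}\to\Sigma^J$. The same matrix maps $\lambda_{N(J)}$ to $(\lambda\Psi)_J=\lambda_J$, so Proposition~\ref{prop:WDPI} gives that step. I expect no real obstacle; the subtle step is only ensuring $\Psi$ preserves $\lambda$, which requires $\kappa<1$ and is why the degenerate case is handled separately.
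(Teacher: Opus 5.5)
Your proof is correct and is essentially the paper's argument: decompose $\Phi=\Psi\Theta_{\kappa,q}$, use Proposition~\ref{prop:PCA:decomposition:invariance} to get $\lambda\Psi=\lambda$, then chain Lemma~\ref{lem:noise:contraction} with Lemma~\ref{lem:PCA:entropy-diffusion}. You handle the degenerate case $\kappa=1$ separately, where $\psi$ is undefined and Proposition~\ref{prop:PCA:decomposition:invariance} does not apply; the paper leaves this implicit. The positivity of $q$ that you use is indeed forced by stationarity, since every single-site marginal of $\lambda\Phi$ is strictly positive.
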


We are now ready to prove the main result in discrete time.

\begin{proof}[Proof of Theorem~\ref{thm:main:PCA:ergodicity}] % (sightly weaker form)]
%	Let $\kappa$ and $q_\smallest$ be as in Proposition~\ref{prop:PCA:relative-entropy:step}.
%	Let
%	\begin{equation}
%		\kappa \isdef \min\big\{\varphi(a_N,b)/q(b): a_N\in\Sigma^N,b\in\Sigma\big\} \;.
%	\end{equation}
%	Then,
%	\begin{align}
%		D_J(\mu\Phi\relto\lambda)
%		&=
%			D_J\big(\mu\Psi\Theta_{\kappa,q}\relto[\big]\lambda\big)
%				&& \text{(Observation~\ref{obs:PCA:noise-decomposition})} \\
%		&\leq
%			(1-\kappa)D_J(\mu\Psi\relto\lambda)
%				&& \text{(Lemma~\ref{lem:noise:contraction})} \\
%		&\leq
%			(1-\kappa)D_{N(J)}(\mu\relto\lambda)
%				&& \text{(by~\eqref{eq:PCA:entropy-diffusion} in Lemma~\ref{lem:PCA:entropy-diffusion})}		
%	\end{align}
%	Iterating the latter inequality, we find that
	Iterating Proposition~\ref{prop:PCA:relative-entropy:step}, we find that
	\begin{equation}
		D_J(\mu\Phi^t\relto\lambda)
		\leq
			(1-\kappa)^t D_{N^t(J)}(\mu\relto\lambda) \;.
	\end{equation}
	for every $t\geq 0$.
	By Lemma~\ref{lem:relative-entropy:bound},
	\begin{equation}
		D_{N^t(J)}(\mu\relto\lambda) \leq \abs{N^t(J)}\log(\sfrac{1}{q_\smallest}) \;.
	\end{equation}
	Let $r\in\NN$ be the interaction radius of~$\Phi$, that is, the smallest integer such that $N\subseteq\zinterval{-r,r}^d$.
	Since $J$ has diameter~$n$, we can find $a\in\ZZ^d$ such that $J\subseteq J'\isdef a+[0,n-1]^d$.
	Therefore,
	\begin{equation}
		\abs{N^t(J)} \leq
			\abs{N^t(J')} =
			(n+2rt)^d \;.
	\end{equation}
	It follows that
	\begin{align}
		D_J(\mu\Phi^t\relto\lambda)
		&\leq
			(1-\kappa)^t(n+2rt)^d\log(\sfrac{1}{q_\smallest}) \\
%		&=
%			(1-\kappa)^t n^d \left(1+\frac{2r}{n}t\right)^d \\
		&\leq
			\alpha_1 \ee^{-\beta_1 t} n^d
	\end{align}
	for any $\beta_1$ satisfying $0<\beta_1<-\log(1-\kappa)$ and an appropriate choice of~$\alpha_1$.
	Pinsker's inequality (\ac{eg},~\cite[Lemma~11.6.1]{CT2006}) now gives
	\begin{equation}
		\norm{\mu\Phi^t-\lambda}_J \leq \alpha\ee^{-\beta t} n^{d/2} \;,
	\end{equation}
	where $\alpha\isdef\sqrt{\alpha_1/2}$ and $\beta\isdef\beta_1/2$.
\end{proof}

\begin{remark}
\label{rem:graph-with-sub-exponential-growth}
	The same argument as above shows that, if $\GG$ is a finitely generated group with sub-exponential growth, then any PCA on $\GG$ (as the lattice) that has positive transition probabilities and admits a stationary Bernoulli measure is ergodic.
	In fact, a similar ergodicity result holds for non-uniform PCA on any countable graph with sub-exponential growth, provided that the local rules at different sites are uniformly strictly positive.
\end{remark}

\section{IPS with stationary Bernoulli measures}
\label{sec:IPS:ergodicity}

\subsection{Change in entropy}

%\begin{proposition}[Change in entropy]
%\label{prop:IPS:entropy-change}
%	Let $\Phi$ be an IPS with generator~$L$ on $\Sigma^{\ZZ^d}$, and let $\lambda$ be a full-support probability measure that is stationary under~$\Phi$.  Let $\mu$ be a probability measure and $J\Subset\ZZ^d$.  Then,
%	$D_J(t)\isdef D_J(\mu\Phi^t\relto\lambda)$ is differentiable at every $t>0$ with
%	\begin{equation}
%		\dot{D}_J(t) =
%			\sum_{w\in\Sigma^J} (\mu\Phi^t L)([w])\log\frac{(\mu\Phi^t)([w])}{\lambda([w])} \;.
%	\end{equation}
%	At $t=0$, the same expression is valid unless $\mu([w])=0$ for some $w\in\Sigma^J$, in which case $\dot{D}_J(0)=-\infty$.
%\end{proposition}
\begin{proposition}[Entropy change in terms of generator]
\label{prop:IPS:entropy-change:in-terms-of-generator}
	Let $\Phi$ be an IPS with generator~$L$ on $\Sigma^{\ZZ^d}$, and let $\lambda$ be a full-support probability measure that is stationary under~$\Phi$.  Let $\mu$ be a probability measure and $J\Subset\ZZ^d$.
	Let $\mu^t\isdef\mu\Phi^t$ and $D_J(t)\isdef D_J(\mu^t\relto\lambda)$.
	Then, at every $t\geq 0$, $D_J(t)$ is differentiable with
	\begin{equation}
		\dot{D}_J(t) =
			\sum_{w\in\Sigma^J} (\mu^t L)([w])\log\frac{\mu^t([w])}{\lambda([w])}
	\end{equation}
	unless $\mu^t([w])=0$ for some $w\in\Sigma^J$.
	%, in which case $\dot{D}_J(0)=-\infty$.
\end{proposition}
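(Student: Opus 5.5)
The derivative formula should follow from two facts: each cylinder probability $t\mapsto\mu^t([w])$ is differentiable with derivative $(\mu^t L)([w])$, and $D_J(t)$ is a finite sum of smooth functions of these probabilities.

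Fix $w\in\Sigma^J$ and let $f\isdef\indicator{[w]}$, a local function with base~$J$. By the standard semigroup theory for finite-range IPS (e.g.\ \cite{Liggett1985,Swart2025}), local functions lie in the domain of the generator, and the backward and forward equations give
\begin{equation}
	\frac{\dd}{\dd t}\,\mu^t(f) = \mu^t(Lf)
\end{equation}
for every $t\geq 0$ (one-sided derivative at $t=0$). By Observation~\ref{obs:IPS:generator-in-terms-of-asynchronous}, $\mu^t(Lf)=\abs{J}\big((\mu^t\widehat{\Phi}_J)([w])-\mu^t([w])\big)=(\mu^t L)([w])$, so $t\mapsto\mu^t([w])$ is differentiable with derivative $(\mu^t L)([w])$. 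Since $Lf$ is local, hence continuous, and $t\mapsto\mu^t$ is weakly continuous, this derivative is continuous in~$t$. To be self-contained, we can also see this directly: since $Lf$ is a bounded local function, $\Phi^h f = f + hLf + O(h^2)$ uniformly, and the $O(h^2)$ term is controlled by $\norm{L^2 f}_\infty$, which is finite because $L^2 f$ is again local.

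Next we write $D_J(t)=\sum_{w} F\big(\mu^t([w]),\lambda([w])\big)$ with $F(x,y)=x\log(x/y)$. Because $\lambda$ has full support, $\lambda([w])>0$, so each summand is a smooth function of $x$ on $(0,1]$. When all $\mu^t([w])>0$, the chain rule gives
\begin{equation}
	\dot{D}_J(t)=\sum_w (\mu^t L)([w])\Big(\log\frac{\mu^t([w])}{\lambda([w])}+1\Big).
\end{equation}
The extra term $\sum_w(\mu^t L)([w])$ is the derivative of $\sum_w\mu^t([w])=1$, so it vanishes. Explicitly, $\sum_w \widehat{\Phi}_J\indicator{[w]}=1$, so summing the identity of Observation~\ref{obs:IPS:generator-in-terms-of-asynchronous} over $w$ gives zero.

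The only delicate step is justifying the forward equation $\frac{\dd}{\dd t}\mu\Phi^t f=\mu\Phi^t L f$ for infinite-volume dynamics on local $f$, including at $t=0$. I would cite the Hille–Yosida construction: local functions form a core and $\Phi^t f\in\mathcal{D}(L)$. Alternatively, I would use the graphical (Poisson clock) representation: in time $h$, the probability that two or more relevant clocks in $N(J)$, or in a bounded neighbourhood of~$J$, ring is $O(h^2)$, while a single ring at $k\in J$ contributes exactly $h\,\widehat{\Phi}_k$. This yields the derivative at every $t$ after applying the Markov property at time~$t$. Note that the stationarity of $\lambda$ is not needed for this computation beyond full support.
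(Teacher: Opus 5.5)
Your proposal is correct and follows essentially the same route as the paper: apply the chain rule to each term $x\log\big(x/\lambda([w])\big)$, using $\frac{\dd}{\dd t}\mu^t([w])=(\mu^t L)([w])$, and drop the extra term because $\sum_w(\mu^t L)([w])=(\mu^t L)(1)=0$. The paper takes the forward equation for cylinder probabilities for granted, whereas you also sketch a justification for it (via the domain of $L$ or the Poisson-clock representation), which is a harmless and welcome addition.
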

\begin{proof} %\comment{Skip or move to the appendix?}
	For $c>0$, the function $g_c(x)\isdef x\log(\sfrac{x}{c})$ is differentiable at every $x>0$ with $g'_c(x)=\log(\sfrac{x}{c})+1$.
	
	Let $t\geq 0$	 and suppose $\mu^t([w])>0$ for all $w\in\Sigma^J$.
	Since $\mu^s=\mu\Phi^s$ is continuous in~$s$, we have $\mu^s([w])>0$ for all $w\in\Sigma^J$ in a neighbourhood of~$t$.  Therefore,
	\begin{align}
		\dot{D}_J(t) &=
			\frac{\dd}{\dd t}\sum_{w\in\Sigma^J} \mu^t([w])\log\frac{\mu^t([w])}{\lambda([w])} \\
		&=
			\sum_{w\in\Sigma^J} \bigg(\frac{\dd}{\dd t}\mu^t([w])\bigg)
				\bigg(\log\frac{\mu^t([w])}{\lambda([w])} + 1\bigg) \\
		&=
			\sum_{w\in\Sigma^J} (\mu^t L)([w])
				\bigg(\log\frac{\mu^t([w])}{\lambda([w])} + 1\bigg) \\
		&=
			\sum_{w\in\Sigma^J} (\mu^t L)([w])\log\frac{\mu^t([w])}{\lambda([w])}
			+
			(\mu^t L)(1) \\
		&=
			\sum_{w\in\Sigma^J} (\mu^t L)([w])\log\frac{\mu^t([w])}{\lambda([w])}
	\end{align}
	as claimed.
\end{proof}

%\begin{corollary}[Linearity of entropy change \comment{skip or clean up!}]
%	In the setting of Proposition~\ref{prop:IPS:entropy-change:in-terms-of-generator}, suppose $L=a_1 L_1 + a_2 L_2$, where $a_1,a_2\in\RR$ and $L_1,L_2$ are generators for two IPSs that preserve~$\lambda$.  Then,
%	\begin{equation}
%		\dot{D}_J(0) = a_1 \dot{D}_{1,J}(0) + a_2 \dot{D}_{2,J}(0)
%	\end{equation}
%	where $D_{1,J}(t)$ and $D_{2,J}(t)$ are the relative entropies associated with $L_1$ and $L_2$ respectively.
%\end{corollary}
%\comment{Warning!  The linearity may not hold at $t>0$ because the dependence on $\mu\Phi^t$ is not linear.}

Combining Propositions~\ref{prop:IPS:entropy-change:in-terms-of-generator} with Observation~\ref{obs:IPS:generator-in-terms-of-asynchronous} gives the following.
\begin{proposition}[Entropy change in terms of asynchronous updating]
\label{prop:IPS:entropy-change:in-terms-of-asynchronous}
	Let $\Phi$ be an IPS with a strictly positive local transition rule $\varphi\colon\Sigma^N\times\Sigma\to(0,1)$ and clock rate~$1$, and let $\lambda$ be a full-support probability measure that is stationary under~$\Phi$.  Let $\mu$ be a probability measure and $J\Subset\ZZ^d$.
	Let $\mu^t\isdef\mu\Phi^t$ and $D_J(t)\isdef D_J(\mu^t\relto\lambda)$.
	Then, $D_J(t)$ is differentiable at every $t>0$ and
	\begin{equation}
		\dot{D}_J(t) =
			\abs{J}\Big(
				D_J\big(\mu^t\widehat{\Phi}_J \relto[\big] \lambda \big)
				-
				D_J\big(\mu^t\widehat{\Phi}_J \relto[\big] \mu^t \big)
				-
				D_J(\mu^t \relto \lambda)
			\Big) \;.
%		&\leq
%			\Big(\abs{N}\log\frac{1}{\varepsilon}\Big) \abs{\partial^- N(J)}
	\end{equation}
\end{proposition}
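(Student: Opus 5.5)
We start from Proposition~\ref{prop:IPS:entropy-change:in-terms-of-generator} and substitute the expression for $\mu^t L$ from Observation~\ref{obs:IPS:generator-in-terms-of-asynchronous}. Since the indicator of each cylinder $[w]$ with $w\in\Sigma^J$ is a local function with base~$J$,
\begin{equation}
	(\mu^t L)([w]) = \abs{J}\Big(\big(\mu^t\widehat{\Phi}_J\big)([w]) - \mu^t([w])\Big) \;.
\end{equation}
Write $\nu\isdef\mu^t\widehat{\Phi}_J$ for brevity. Then
\begin{equation}
	\dot{D}_J(t) = \abs{J}\sum_{w\in\Sigma^J}\big(\nu([w])-\mu^t([w])\big)\log\frac{\mu^t([w])}{\lambda([w])} \;.
\end{equation}

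We split the logarithm as $\log\frac{\mu^t([w])}{\lambda([w])} = \log\frac{\nu([w])}{\lambda([w])} - \log\frac{\nu([w])}{\mu^t([w])}$ and sum against $\nu([w])$. This gives $D_J(\nu\relto\lambda) - D_J(\nu\relto\mu^t)$. The remaining term, $-\sum_w\mu^t([w])\log\frac{\mu^t([w])}{\lambda([w])}$, is exactly $-D_J(\mu^t\relto\lambda)$. Putting these together yields the claimed identity. Note that $\widehat{\Phi}_J$ only changes sites in~$J$, but its transition probabilities depend on $N(J)$; this is harmless, because $\nu_J$ is a well-defined marginal and only cylinders with base $J$ are involved.

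The main obstacle is justifying differentiability and the finiteness of all logarithms at every $t>0$. Proposition~\ref{prop:IPS:entropy-change:in-terms-of-generator} requires $\mu^t([w])>0$ for every $w\in\Sigma^J$. We will show this from strict positivity of~$\varphi$. With positive probability, each site of $J$ has its clock ring exactly once during $[0,t]$, while no site of $N(J)\setminus J$ rings; this event has probability at least some $c(t)>0$ independent of the configuration. On this event, updating the sites of $J$ in the order of their rings, the final symbol at each site equals the prescribed $w_k$ with conditional probability at least $\min\varphi>0$, whatever the current neighbourhood. Hence $\mu^t([w])\geq c(t)(\min\varphi)^{\abs{J}}>0$ uniformly in~$\mu$.

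The same argument, applied with $\widehat{\Phi}_J$ in place of the dynamics, gives $\nu([w])\geq \frac{1}{\abs{J}}\cdot 0+\ldots$, but in fact positivity of $\nu$ is not needed: terms with $\nu([w])=0$ contribute $0$ under the convention $0\log 0=0$, and every denominator ($\lambda$ has full support, $\mu^t>0$) is positive. So all three relative entropies are finite and the algebraic rearrangement is legitimate.
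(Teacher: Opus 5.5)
Your proof is correct and is essentially the paper's: you substitute Observation~\ref{obs:IPS:generator-in-terms-of-asynchronous} into Proposition~\ref{prop:IPS:entropy-change:in-terms-of-generator} and split the logarithm by inserting $\nu([w])/\nu([w])$, and your clock argument (each site of $J$ ringing exactly once in $[0,t]$) spells out the full-support claim $\mu^t([w])>0$ that the paper states in one line. You should delete the half-finished sentence about a lower bound on $\nu([w])$; as you note it is not needed, and in any case $\nu([w])\geq(\min\varphi)\,\mu^t([w])>0$ follows directly from the definition of $\widehat{\Phi}_J$.
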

\begin{proof} %\comment{Skip or move to the appendix?}
	Since $\varphi$ is strictly positive, $\mu^t=\mu\Phi^t$ and $\mu^t\widehat{\Phi}_J$  are fully supported for every $t>0$.
	Thus, for every $t>0$, we have
	\begin{align}
		\dot{D}_J(t) &=
			\sum_{w\in\Sigma^J} (\mu^t L)([w])\log\frac{\mu^t([w])}{\lambda([w])}
			&& \text{(Proposition~\ref{prop:IPS:entropy-change:in-terms-of-generator})} \\
		&=
			\abs{J}\sum_{w\in\Sigma^J} \big((\mu^t\widehat{\Phi}_J)([w])-\mu^t([w])\big)\log\frac{\mu^t([w])}{\lambda([w])}
			&& \text{(Observation~\ref{obs:IPS:generator-in-terms-of-asynchronous})} \\
		&=
			\abs{J}\sum_{w\in\Sigma^J}
			\mathrlap{
			\Bigg(
				(\mu^t\widehat{\Phi}_J)([w])
				\log\bigg(\frac{\mu^t([w])}{\lambda([w])}\cdot\frac{(\mu^t\widehat{\Phi}_J)([w])}{(\mu^t\widehat{\Phi}_J)([w])}\bigg)
				-
				\mu^t([w])\log\frac{\mu^t([w])}{\lambda([w])}
			\Bigg)
			} \\
		&=
			\abs{J}\Big(
				D_J\big(\mu^t\widehat{\Phi}_J \relto[\big] \lambda \big)
				-
				D_J\big(\mu^t\widehat{\Phi}_J \relto[\big] \mu\Phi^t \big)
				-
				D_J(\mu^t \relto \lambda)
			\Big)
	\end{align}
	as claimed.
\end{proof}

\subsection{Noise decomposition}
\label{sec:IPS:noise-decomposition}

Let $\Phi=(\Phi^t)_{t\geq 0}$ be a $d$-dimensional IPS with a strictly positive local transition rule $\varphi\colon\Sigma^N\times\Sigma\to(0,1)$ and clock rate~$1$.  %Let $L$ denote the generator of~$\Phi$.
Let $\lambda_q=\otimes_{i\in\ZZ^d}q$ be a Bernoulli measure with marginal~$q$, and suppose that $\lambda_q$ is stationary under~$\Phi$.

As in Section~\ref{sec:PCA:ergodicity}, we start by representing $\Phi$ as a perturbation of another IPS with noise.  Namely, let $\kappa>0$ be such that $\varphi(a_N,b)\geq\kappa q(b)$ for every $a_N\in\Sigma^N$ and $b\in\Sigma$, and decompose $\varphi$ as
\begin{equation}
	\varphi(a_N,b) = \kappa q(b) + (1-\kappa)\psi(a_N,b) \;,
\end{equation}
where $\psi\colon\Sigma\times\Sigma\to[0,1]$ is another local transition rule.
In order to draw a sample from $\varphi(a_N,\cdot)$, we can flip a biased coin with parameter~$\kappa$ to decide whether to sample from $q$ or from $\psi(a_N,\cdot)$.
In light of the colouring theorem of Poisson processes (see \ac{eg},~\cite[Section~5.1]{Kingman1993}), we can thus interpret the evolution of~$\Phi$ as follows:  Each site $i\in\ZZ$ is assigned two independent Poisson clocks $\xi^*_i$ and $\xi^\circ_i$ with rates $\kappa$ and $(1-\kappa)$ respectively.  The clocks attached to different sites are independent.  At each tick of $\xi^*_i$, the symbol at site $i$ is resampled according to~$q$, and at each tick of $\xi^\circ_i$, the symbol at site~$i$ is resampled according to~$\psi$, depending on its current neighbourhood pattern.

%Let
%\begin{align}
%	(\widetilde{L}f)(x) &\isdef
%		\sum_{k\in\ZZ^d} \sum_{b\in\Sigma}\psi\big((\sigma^k x)_N, b\big)\big(f(x^{(k\to b)})-f(x)\big) \;, \\
%	(L^*f)(x) &\isdef
%		\sum_{k\in\ZZ^d} \sum_{b\in\Sigma}q(b)\big(f(x^{(k\to b)})-f(x)\big)
%\end{align}
%be the generators for the IPS with local rules $\psi$ and $q$.
%\begin{observation}[Noise decomposition]
%\label{obs:IPS:noise-decomposition}
%	The generator of $\Phi$ can be written as
%	\begin{equation}
%		L = \kappa L^* + (1-\kappa)\widetilde{L} \;.
%	\end{equation}
%\end{observation}

The generator of $\Phi$ can be decomposed as
\begin{equation}
	L = \kappa L^* + (1-\kappa)L^\circ \;,
\end{equation}
where
\begin{align}
	(L^\circ f)(x) &\isdef
		\sum_{k\in\ZZ^d} \sum_{b\in\Sigma}\psi\big((\sigma^k x)_N, b\big)\big(f(x^{(k\to b)})-f(x)\big) \;, \\
	(L^* f)(x) &\isdef
		\sum_{k\in\ZZ^d} \sum_{b\in\Sigma}q(b)\big(f(x^{(k\to b)})-f(x)\big)
\end{align}
are the generators for the IPS with local rules $\psi$ and $q$.
We interpret $L^*$ as the generator of the \emph{asynchronous, zero-range, memoryless noise} with \emph{error rate} $1$ and \emph{replacement distribution} $q$.

Clearly, $\lambda_q$ is preserved by noise, that is, $\lambda_q L^*=0$.  It follows that:
\begin{lemma}
\label{lem:IPS:decomposition:invariance}
	If $\Phi$ preserves the Bernoulli measure~$\lambda_q$ and $\kappa<1$, then $\lambda_q L^\circ=0$.
\end{lemma}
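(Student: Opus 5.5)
The proof is linear algebra on the generator. Stationarity of $\lambda_q$ under $\Phi$ means $\lambda_q L f=0$ for every local function~$f$, that is, $(\lambda_q L)([w])=0$ for every cylinder~$[w]$. This holds because $\frac{\dd}{\dd t}(\lambda_q\Phi^t)([w])\big|_{t=0}=(\lambda_q L)([w])$, and the left-hand side vanishes since $\lambda_q\Phi^t=\lambda_q$ for all~$t$. We will use the decomposition $L=\kappa L^*+(1-\kappa)L^\circ$, which holds as an identity on local functions: it follows from $\varphi=\kappa q+(1-\kappa)\psi$ by linearity of the defining sums, each of which is finite when applied to a local function.

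The key step is to check that $\lambda_q L^*=0$. For a local function $f$ with base $J$, only the sites $k\in J$ contribute, and
\[
(\lambda_q L^*f)=\sum_{k\in J}\Big(\int\sum_{b\in\Sigma}q(b)f(\zeta_{k\to b}x)\,\lambda_q(\dd x)-\int f\,\dd\lambda_q\Big).
\]
Each summand vanishes. Resampling coordinate $k$ from $q$, independently of everything else, leaves the product measure $\lambda_q$ unchanged, because under $\lambda_q$ the coordinate $x_k$ is already $q$-distributed and independent of $x_{\ZZ^d\setminus\{k\}}$. This is Observation~\ref{obs:IPS:local-preservation} applied to the noise rule $q$.

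It then follows that $0=\lambda_q L f=\kappa\,\lambda_q L^*f+(1-\kappa)\,\lambda_q L^\circ f=(1-\kappa)\,\lambda_q L^\circ f$. Since $\kappa<1$, we may divide by $1-\kappa>0$ to obtain $\lambda_q L^\circ f=0$ for all local $f$, which is the claim.

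The only point needing care is the interpretation of the identities $\lambda_q L=0$ and $\lambda_q L^\circ=0$. We read them as statements about finitely additive signed measures on cylinders, equivalently as $\int Lf\,\dd\lambda_q=0$ for local $f$; with this reading the linear decomposition of the generator passes through directly. If one instead wants the stronger conclusion that $\lambda_q$ is stationary for the semigroup generated by $L^\circ$, one would invoke the standard fact that local functions form a core for finite-range IPS generators (see~\cite{Liggett1985}). This is routine, but it is the only step that is not purely algebraic.
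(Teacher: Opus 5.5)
Your proof is correct and follows the paper's argument: write $\lambda_q L=\kappa\,\lambda_q L^*+(1-\kappa)\,\lambda_q L^\circ$, use $\lambda_q L=0=\lambda_q L^*$, and divide by $1-\kappa>0$. Beyond the paper, you also justify why stationarity gives $\lambda_q Lf=0$ for local $f$ and why $\lambda_q L^*=0$, which the paper treats as clear. Your closing remark about cores is not needed for the lemma as stated.
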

\begin{proof}
	We have $\lambda_q L = \kappa\lambda_q L^* + (1-\kappa)\lambda_q L^\circ$.
	Since $\lambda_q L=0=\lambda_q L^*$, we must also have $\lambda_q L^\circ=0$.
\end{proof}

Let $(\widehat{\Phi}_J)_{J\Subset\ZZ^d}$, $(\widehat{\Psi}_J)_{J\Subset\ZZ^d}$ and $(\Theta_J)_{J\Subset\ZZ^d}$ be, respectively, the families of asynchronous updating kernels associated with $\varphi$, $\psi$ and $q$, defined as in~\eqref{eq:asynchronous-kernel:finite-set}.
Note that $(\Theta_J)_{J\Subset\ZZ^d}$ corresponds to the (asynchronous, zero-range, memoryless) noise with error rate~$1$ and replacement distribution~$q$.
%We call $(\Theta_J)_{J\Subset\ZZ^d}$ the \emph{asynchronous, zero-range, memoryless noise} with \emph{error probability} $\kappa$ and \emph{replacement distribution} $q$.

\begin{observation}[Noise decomposition]
\label{obs:IPS:noise-decomposition}
	$\widehat{\Phi}_J=\kappa\Theta_J + (1-\kappa)\widehat{\Psi}_J$ for every $J\Subset\ZZ^d$.
\end{observation}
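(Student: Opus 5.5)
The identity is linear in the kernels, so the plan is to verify it site by site and then average over~$J$. Fix $J\Subset\ZZ^d$ and $k\in J$, and let $\Theta_k$ and $\widehat{\Psi}_k$ be the single-site updating kernels associated with $q$ and $\psi$, defined as in~\eqref{eq:asynchronous-kernel:single-site}. Concretely, $(\Theta_k f)(x)=\sum_{b\in\Sigma}q(b)f(\zeta_{k\to b}x)$ and $(\widehat{\Psi}_k f)(x)=\sum_{b\in\Sigma}\psi\big((\sigma^k x)_N,b\big)f(\zeta_{k\to b}x)$. By definition~\eqref{eq:asynchronous-kernel:finite-set}, $\Theta_J$ and $\widehat{\Psi}_J$ are the uniform averages of these single-site kernels over $k\in J$.

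The first step is the single-site identity $\widehat{\Phi}_k=\kappa\Theta_k+(1-\kappa)\widehat{\Psi}_k$. For every $x\in\Sigma^{\ZZ^d}$ and every bounded measurable (in particular continuous) $f$, substitute the decomposition $\varphi(a_N,b)=\kappa q(b)+(1-\kappa)\psi(a_N,b)$ with $a_N=(\sigma^k x)_N$ into~\eqref{eq:asynchronous-kernel:single-site}:
\begin{equation}
	\big(\widehat{\Phi}_k f\big)(x)
	= \kappa\sum_{b\in\Sigma}q(b)f(\zeta_{k\to b}x) + (1-\kappa)\sum_{b\in\Sigma}\psi\big((\sigma^k x)_N,b\big)f(\zeta_{k\to b}x)
	= \kappa(\Theta_k f)(x)+(1-\kappa)(\widehat{\Psi}_k f)(x) \;.
\end{equation}
Since a transition kernel is determined by its action on bounded measurable functions (equivalently, on indicators of measurable sets), this gives the identity as kernels.

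The second step is to average the single-site identity over $k\in J$ with weights $1/\abs{J}$. By linearity this yields $\widehat{\Phi}_J=\kappa\Theta_J+(1-\kappa)\widehat{\Psi}_J$.

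There is no real obstacle here. The only points to state explicitly are that $\psi$ is a genuine stochastic matrix, so that $\widehat{\Psi}_J$ is a bona fide kernel (which follows from the choice of~$\kappa$, as in the proof of Proposition~\ref{prop:SDPI}), and that $\Theta_J$ coincides with the asynchronous updating kernel of the rule $a_N\mapsto q$, which does not depend on the neighbourhood.
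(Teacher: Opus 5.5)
Your argument is correct and is exactly the reasoning the paper leaves implicit: substitute $\varphi(a_N,b)=\kappa q(b)+(1-\kappa)\psi(a_N,b)$ into the single-site kernel, then average over $k\in J$, using that both kernel definitions are linear in the local rule. In the degenerate case $\kappa=1$, one has $\varphi(a_N,\cdot)=q$, so $\psi$ is not determined by the formula, but the identity then holds for any stochastic $\psi$.
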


\subsection{Almost depletion of entropy}
\label{sec:IPS:entropy-almost-depletion}

Thanks to Proposition~\ref{prop:IPS:entropy-change:in-terms-of-asynchronous}, analysing the evolution of entropy in positive-rate IPS boils down to examining the change in entropy under asynchronous updating kernels.
As in the case of a PCA, asynchronous updating of a set $J$ using any local rule~$\psi$ diffuses the entropy only through the boundary of~$J$ while asynchronous noise dampens the entropy across the entire set~$J$.  In absence of a continuous-time analogue of inequality~\eqref{eq:PCA:entropy-diffusion:1}, however, we need a somewhat more elaborate argument to conclude exponential ergodicity.  

\begin{lemma}[Local diffusion of entropy] %[Effect of a PCA on entropy]
\label{lem:IPS:entropy-diffusion}
	%Let $\widehat{\Psi}$ be a $d$-dimensional PCA with local transition rule $\varphi\colon\Sigma^N\times\Sigma\to[0,1]$,
	Let $\widehat{\Psi}=(\widehat{\Psi}_J)_{J\Subset\ZZ^d}$ be the family of asynchronous updating kernels associated with a local transition rule $\psi\colon\Sigma^N\times\Sigma\to[0,1]$, and suppose that $\widehat{\Psi}_J$ preserves a Bernoulli measure~$\lambda=\lambda_q$.
	Then,
	\begin{equation}
		D_J(\mu\widehat{\Psi}_J\relto\lambda) \leq
			D_J(\mu\relto\lambda) + \abs{N}\log(\sfrac{1}{q_\smallest})\cdot\frac{\abs{\partial^- N(J)}}{\abs{J}}
	\end{equation}
	for every probability measure~$\mu$ and every finite set $J\subseteq\ZZ^d$,
	where $q_\smallest\isdef\min\{q(a): a\in\Sigma\}$ and
	$\partial^-N(J)\isdef\{k\in J: N(k)\not\subseteq J\}$.
\end{lemma}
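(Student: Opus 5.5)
The plan is to compare $\mu\widehat{\Psi}_J$ with $\lambda\widehat{\Psi}_J$ and to split the averaged kernel $\widehat{\Psi}_J$ into its single-site pieces. Read the hypothesis as $\lambda\widehat{\Psi}_J=\lambda$, at least on the marginal on~$J$, which is all that is needed. Then
\[
D_J(\mu\widehat{\Psi}_J\relto\lambda)=D_J(\mu\widehat{\Psi}_J\relto\lambda\widehat{\Psi}_J).
\]
This replacement matters because the individual kernels $\widehat{\Psi}_k$ need not preserve~$\lambda$ separately, so one cannot simply average single-site bounds against~$\lambda$ itself.

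Next, realize both sides with a common selector, as in the proof of Proposition~\ref{prop:SDPI:asynchronous}. Draw $K$ uniformly from~$J$, independently of $X\sim\mu$ and of $Z\sim\lambda$. Let $C$ be the result of updating $X$ at site~$K$ using~$\psi$, and let $C'$ be the result of updating $Z$ at~$K$ in the same way. Then $C_J\sim(\mu\widehat{\Psi}_J)_J$ and $C'_J\sim(\lambda\widehat{\Psi}_J)_J$. Lemma~\ref{lem:relative-entropy:conditioning-on-common-info} gives
\[
D_J(\mu\widehat{\Psi}_J\relto\lambda\widehat{\Psi}_J)\leq \frac{1}{\abs{J}}\sum_{k\in J} D_J(\mu\widehat{\Psi}_k\relto\lambda\widehat{\Psi}_k).
\]

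The sites are then handled in two cases.

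\emph{Interior sites.} Take $k\in J\setminus\partial^-N(J)$, so $N(k)\subseteq J$. The marginal on~$J$ after the update depends only on the configuration on~$J$, so $\widehat{\Psi}_k$ induces a stochastic matrix $\Sigma^J\times\Sigma^J\to[0,1]$ acting on $J$-marginals. The weak data processing inequality (Proposition~\ref{prop:WDPI}) then gives $D_J(\mu\widehat{\Psi}_k\relto\lambda\widehat{\Psi}_k)\leq D_J(\mu\relto\lambda)$.

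\emph{Boundary sites.} Take $k\in\partial^-N(J)$ and set $B\isdef N(k)\setminus J$. The updated $J$-marginal is a stochastic image of the configuration on $J\cup B$. Proposition~\ref{prop:WDPI} followed by the chain rule gives
\[
D_J(\mu\widehat{\Psi}_k\relto\lambda\widehat{\Psi}_k)\leq D_{J\cup B}(\mu\relto\lambda)=D_J(\mu\relto\lambda)+D\big((X_B\given X_J)\relto[\big](Z_B\given Z_J)\big).
\]
Because $\lambda$ is a product measure, $(Z_B\given Z_J)$ has the same law as $Z_B$. Lemma~\ref{lem:relative-entropy:bound} therefore bounds the last term by $\abs{B}\log(\sfrac{1}{q_\smallest})\leq\abs{N}\log(\sfrac{1}{q_\smallest})$.

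Averaging the two cases over $k\in J$ gives
\[
D_J(\mu\widehat{\Psi}_J\relto\lambda)\leq D_J(\mu\relto\lambda)+\abs{N}\log(\sfrac{1}{q_\smallest})\cdot\frac{\abs{\partial^-N(J)}}{\abs{J}},
\]
which is the claim.

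The step I expect to need most care is the first one. Everything hinges on being able to replace $\lambda$ by $\lambda\widehat{\Psi}_J$ on the $J$-marginal, and on the conditioning-on-$K$ argument being applied with the reference measure also transported through the same selected single-site kernel. If the hypothesis were only $\lambda L^\circ=0$, I would first deduce $\lambda\widehat{\Psi}_J=\lambda$ on cylinders with base~$J$ from Observation~\ref{obs:IPS:generator-in-terms-of-asynchronous} (applied to $L^\circ$, $\psi$ and $\widehat{\Psi}_J$). The remaining steps are routine applications of Proposition~\ref{prop:WDPI}, the chain rule, and Lemma~\ref{lem:relative-entropy:bound}.
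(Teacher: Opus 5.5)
Your proof is correct. It shares the paper's skeleton: split $\widehat{\Psi}_J$ into single-site kernels, treat interior and boundary sites separately via Proposition~\ref{prop:WDPI}, and bound the boundary excess with the chain rule and Lemma~\ref{lem:relative-entropy:bound}. Your first step, however, differs from the paper's in a way that matters.

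The paper uses convexity of $D_J(\cdot\relto\lambda)$ in its first argument only. It then applies data processing to each term $D_J(\mu\widehat{\Psi}_k\relto\lambda)$ with $\lambda$ itself as the reference. That tacitly needs $\lambda\widehat{\Psi}_k=\lambda$ for every $k$, i.e.\ local stationarity, which holds only if the hypothesis is read as holding for all $J$, singletons included.

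Your joint convexity avoids this. Conditioning on the selector $K$ via Lemma~\ref{lem:relative-entropy:conditioning-on-common-info}, with the reference also pushed through $\widehat{\Psi}_k$, needs only $(\lambda\widehat{\Psi}_J)_J=\lambda_J$. That is exactly what Lemma~\ref{lem:IPS:decomposition:invariance} and Observation~\ref{obs:IPS:generator-in-terms-of-asynchronous} deliver in Proposition~\ref{prop:IPS:relative-entropy:derivative}.

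The distinction is real. Mix the rule of Example~\ref{exp:IPS:stationary-but-not-locally} half-and-half with uniform noise. This gives a positive-rate IPS with stationary $\lambda_{\sfrac{1}{2}}$, and its noise decomposition (with $\kappa=\sfrac{1}{2}$) has $\psi=\varphi_\varepsilon$. Take $\mu=\lambda$ and $J$ a long interval. Each interior term $D_J(\lambda\widehat{\Psi}_k\relto\lambda)$ is then the same positive constant, because the update creates correlation among sites $k-1,k,k+1$. So the paper's intermediate average stays bounded away from zero, while the target bound is $O(1/\abs{J})$.

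The one-sided convexity step therefore cannot be repaired downstream. Your route proves the lemma under the hypothesis actually available in the application, at no extra cost.
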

\begin{proof}
	By convexity, we have
	\begin{equation}
		D_J(\mu\widehat{\Psi}_J\relto\lambda) =
			D_J\bigg(\frac{1}{\abs{J}}\sum_{k\in J}\mu\widehat{\Psi}_k\relto[\bigg]\lambda\bigg)
		\leq
			\frac{1}{\abs{J}}\sum_{k\in J}D_J(\mu\widehat{\Psi}_k\relto\lambda) \,.
	\end{equation}
	Using the weak data processing inequality~(Proposition~\ref{prop:WDPI}), for every $k\in J$, we have
%	\begin{equation}
%		D_J(\mu\widehat{\Psi}_k\relto\lambda)
%			\leq
%			\begin{cases}
%				D_J(\mu\relto\lambda) &\text{if $N(k)\subseteq J$,} \\
%				D_{J\cup N(k)}(\mu\relto\lambda) &\text{if $N(k)\not\subseteq J$.}
%			\end{cases}
%	\end{equation}
	\begin{align}
		D_J(\mu\widehat{\Psi}_k\relto\lambda) &\leq
			\mathrlap{D_J(\mu\relto\lambda)}\qquad\qquad\qquad\qquad\qquad
			\text{if $N(k)\subseteq J$,} \\
		D_J(\mu\widehat{\Psi}_k\relto\lambda) &\leq
			\mathrlap{D_{J\cup N(k)}(\mu\relto\lambda)}\qquad\qquad\qquad\qquad\qquad
			\text{if $N(k)\not\subseteq J$.}
	\end{align}
	Hence,
	\begin{align}
		D_J(\mu\widehat{\Psi}_J\relto\lambda) &\leq
			\frac{1}{\abs{J}}\Bigg(
				\sum_{\substack{k\in J\\ N(k)\subseteq J}}D_J(\mu\relto\lambda) + 
				\sum_{\substack{k\in J\\ N(k)\not\subseteq J}} D_{J\cup N(k)}(\mu\relto\lambda)
			\Bigg) \\
		&=
			\frac{1}{\abs{J}}\bigg(
				\sum_{k\in J}D_J(\mu\relto\lambda) + 
				\sum_{k\in \partial^-N(J)}
					\Big(D_{J\cup N(k)}(\mu\relto\lambda)-D_J(\mu\relto\lambda)\Big)
			\bigg) \\
		&=
			D_J(\mu\relto\lambda) +
				\frac{1}{\abs{J}}\sum_{k\in \partial^-N(J)}
					\Big(D_{J\cup N(k)}(\mu\relto\lambda)-D_J(\mu\relto\lambda)\Big) \, .
			\label{eq:IPS:entropy-diffusion:proof:1}
	\end{align}
	Now, consider a site $k\in J$ such that $N(k)\not\subseteq J$.
	Letting $X$ and $Z$ be random configurations, respectively distributed according to $\mu$ and $\lambda$, we can write:
	\begin{align}
		D_{J\cup N(k)}(\mu\relto\lambda) - D_J(\mu\relto\lambda) &=
			D\big(X_{J\cup N(k)}\relto[\big]Z_{J\cup N(k)}\big) - 
			D(X_J\relto Z_J) \\
		&=
			D\big((X_{N(k)\setminus J}\given X_J)\relto[\big](Z_{N(k)\setminus J}\given Z_J)\big)
			&& \text{(chain rule)} \\
		&\leq
			\abs{N}\log(\sfrac{1}{q_\smallest}) \;.
			&& \text{(Lemma~\ref{lem:relative-entropy:bound})}
			\label{eq:IPS:entropy-diffusion:proof:2}
	\end{align}
	Combining~\eqref{eq:IPS:entropy-diffusion:proof:1} and~\eqref{eq:IPS:entropy-diffusion:proof:2} gives
	\begin{equation}
		D_J(\mu\widehat{\Psi}_J\relto\lambda) \leq
			D_J(\mu\relto\lambda) +
				\frac{\abs{\partial^-N(J)}}{\abs{J}}\cdot
					\abs{N}\log(\sfrac{1}{q_\smallest}) \;,
	\end{equation}
	as claimed.
\end{proof}

\begin{lemma}[Entropy decay]  %[Effect of noise on entropy]
\label{lem:IPS:noise:contraction}
	Let $(\Theta_J)_{J\Subset\ZZ^d}$ be the family of asynchronous updating kernels associated with memoryless noise
	with error rate~$1$ and replacement distribution~$q$,
	and let $\lambda=\lambda_q$ be the Bernoulli measure with marginal~$q$.
	Then,
	\begin{equation}
		D_J(\mu\Theta_J\relto\lambda)
		\leq
			\Big(1-\frac{1}{\abs{J}}\Big) D_J(\mu\relto\lambda)
	\end{equation}
	for every probability measure~$\mu$ and every finite set $J\subseteq\ZZ^d$.
\end{lemma}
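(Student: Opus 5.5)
This follows directly from Proposition~\ref{prop:SDPI:asynchronous}, applied to the non-interacting chains on the sites of~$J$. The key point is that $\Theta_J$ only ever updates sites inside~$J$, and each update ignores the neighbourhood and draws a fresh symbol from~$q$. Hence the marginal of $\mu\Theta_J$ on~$J$ depends only on the marginal $\mu_J$, and the induced transition matrix on $\Sigma^J$ is exactly $\widehat{\theta}_n$ with $n\isdef\abs{J}$ (after enumerating $J$ as $k_1,\ldots,k_n$). Here $\theta\colon\Sigma\times\Sigma\to(0,1)$ is the matrix $\theta(a,b)\isdef q(b)$, i.e.\ the single-site replacement kernel.

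Concretely, I would first check from \eqref{eq:asynchronous-kernel:single-site} and \eqref{eq:asynchronous-kernel:finite-set} that, for $w\in\Sigma^J$,
\begin{equation}
	(\mu\Theta_J)([w]) = \sum_{u\in\Sigma^J}\mu([u])\,\widehat{\theta}_n(u,w) \;,
\end{equation}
so that $(\mu\Theta_J)_J=\mu_J\widehat{\theta}_n$. This amounts to computing $\Theta_k\indicator{[w]}$ for $k\in J$: updating site $k$ replaces the symbol there by a $q$-sample, leaving the others unchanged. Next I would note that $q$ is stationary for~$\theta$, that $\theta$ is strictly positive (provided $q$ is strictly positive, which is implicit since $\lambda_q$ has full support; if some $q(b)=0$ one restricts $\Sigma$ to the support of $q$, and when $D_J(\mu\relto\lambda)=\infty$ the inequality is trivial), and that $\theta(a,b)=q(b)\geq\kappa q(b)$ with $\kappa=1$. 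Finally, since $\lambda_J=\otimes_{i\in J}q$, Proposition~\ref{prop:SDPI:asynchronous} with $\kappa=1$ gives
\begin{equation}
	D_J(\mu\Theta_J\relto\lambda)=D\big(\mu_J\widehat{\theta}_n\relto[\big]\otimes_{i=1}^n q\big)\leq\Big(1-\frac{1}{n}\Big)D\big(\mu_J\relto[\big]\otimes_{i=1}^n q\big)=\Big(1-\frac{1}{\abs{J}}\Big)D_J(\mu\relto\lambda) \;.
\end{equation}

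There is no real obstacle. The only point needing care is the identification of the $J$-marginal dynamics with $\widehat{\theta}_n$, which relies on the noise updates being zero-range, so that no information enters $J$ from outside. The other detail is the strict-positivity hypothesis of Proposition~\ref{prop:SDPI:asynchronous}, handled as above.
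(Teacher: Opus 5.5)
Your proposal is correct and follows the paper's own argument: the paper likewise applies Proposition~\ref{prop:SDPI:asynchronous} with $\theta(a,b)\isdef q(b)$ and $\kappa=1$. Your explicit identification $(\mu\Theta_J)_J=\mu_J\widehat{\theta}_{\abs{J}}$ and your remark on strict positivity of $q$ simply spell out details the paper leaves implicit.
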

\begin{proof}
	This is an application of the strong data processing inequality for non-interacting Markov chains with asynchronous updating (Proposition~\ref{prop:SDPI:asynchronous}).  Note that for $\theta(a,b)\isdef q(b)$, we can choose $\kappa=1$.
\end{proof}

Putting Proposition~\ref{prop:IPS:entropy-change:in-terms-of-asynchronous}, Observation~\ref{obs:IPS:noise-decomposition}, Lemmas~\ref{lem:IPS:entropy-diffusion}, and Lemma~\ref{lem:IPS:noise:contraction} together gives the following.
\begin{proposition}[Local entropy diffusion with decay] %[Effect of a positive-rate PCA on entropy]
\label{prop:IPS:relative-entropy:derivative}
	Let $\Phi$ be an IPS with a strictly positive local transition rule $\varphi\colon\Sigma^N\times\Sigma\to(0,1)$.
	%where $N\ni 0$.
	Suppose that $\Phi$ admits a stationary Bernoulli measure $\lambda=\lambda_q$.
	Let $\mu$ be a probability measure and $J\Subset\ZZ^d$.
	Let $\mu^t\isdef\mu\Phi^t$ and $D_J(t)\isdef D_J(\mu^t\relto\lambda)$.
	Then,
	\begin{equation}
		\dot{D}_J(t) \leq
			-\kappa D_{J}(t) + (1-\kappa)\abs{N}\log(\sfrac{1}{q_\smallest})\cdot\abs{\partial^- N(J)} \;,
	\end{equation}
	%for every probability measure~$\mu$ and every finite set $J\subseteq\ZZ^d$,
	for every $t>0$,
	where
	$\kappa \isdef \min\big\{\varphi(a_N,b)/q(b): a_N\in\Sigma^N,b\in\Sigma\big\}$ and
	$q_\smallest\isdef\min\{q(a): a\in\Sigma\}$.
\end{proposition}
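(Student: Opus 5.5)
The plan is to start from the identity of Proposition~\ref{prop:IPS:entropy-change:in-terms-of-asynchronous}. Since $D_J(\mu^t\widehat{\Phi}_J\relto\mu^t)\geq 0$, for every $t>0$ we get
\begin{equation}
	\dot{D}_J(t) \leq \abs{J}\Big(D_J\big(\mu^t\widehat{\Phi}_J\relto[\big]\lambda\big) - D_J(\mu^t\relto\lambda)\Big) \;.
\end{equation}
So it suffices to show that one step of asynchronous updating of $J$ satisfies
\begin{equation}
	D_J(\nu\widehat{\Phi}_J\relto\lambda) \leq \Big(1-\frac{\kappa}{\abs{J}}\Big)D_J(\nu\relto\lambda) + \frac{(1-\kappa)\abs{N}\log(\sfrac{1}{q_\smallest})\abs{\partial^-N(J)}}{\abs{J}}
\end{equation}
for every probability measure $\nu$, and then apply it with $\nu=\mu^t$. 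Multiplying by $\abs{J}$ and subtracting $\abs{J}D_J(\nu\relto\lambda)$ gives exactly the claimed bound.

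To get the one-step inequality, I will use the noise decomposition of Observation~\ref{obs:IPS:noise-decomposition}, $\widehat{\Phi}_J=\kappa\Theta_J+(1-\kappa)\widehat{\Psi}_J$. Then $\nu\widehat{\Phi}_J$ is the corresponding mixture of $\nu\Theta_J$ and $\nu\widehat{\Psi}_J$. By joint convexity of relative entropy in its first argument (the second argument $\lambda_J$ being fixed),
\begin{equation}
	D_J(\nu\widehat{\Phi}_J\relto\lambda)\leq \kappa D_J(\nu\Theta_J\relto\lambda) + (1-\kappa)D_J(\nu\widehat{\Psi}_J\relto\lambda) \;.
\end{equation}
The two terms are bounded separately.
\begin{itemize}
	\item For the noise term, Lemma~\ref{lem:IPS:noise:contraction} gives the factor $(1-1/\abs{J})$.
	\item For the $\widehat{\Psi}_J$ term, Lemma~\ref{lem:IPS:entropy-diffusion} applies. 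Its hypothesis is that $\lambda$ is preserved by the single-site kernels $\widehat{\Psi}_k$ (equivalently $\widehat{\Psi}_J$), and this is the point needing care.
\end{itemize}

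The main obstacle is that we only know the generator condition $\lambda L^\circ=0$ (Lemma~\ref{lem:IPS:decomposition:invariance}). Observation~\ref{obs:IPS:local-preservation} warns that this does not imply $\lambda\widehat{\Psi}_k=\lambda$ for each $k$. Looking at the proof of Lemma~\ref{lem:IPS:entropy-diffusion}, however, invariance is used only through the weak data processing step $D_J(\nu\widehat{\Psi}_k\relto\lambda)\le D_{J\cup N(k)}(\nu\relto\lambda)$, which needs $\lambda\widehat{\Psi}_k$ and $\lambda$ to agree on $J$.

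I therefore plan to verify that $\lambda_J\widehat{\Psi}_J$ restricted to $J$ equals $\lambda_J$ using $\lambda L^\circ=0$ and the identity $(\lambda L^\circ)([w])=\abs{J}\big((\lambda\widehat{\Psi}_J)([w])-\lambda([w])\big)$ for $w\in\Sigma^J$ (Observation~\ref{obs:IPS:generator-in-terms-of-asynchronous}, applied to $\psi$). This gives invariance of $\lambda_J$ under the averaged kernel $\widehat{\Psi}_J$, which is the hypothesis as stated in the lemma. If the lemma's proof really needs per-site invariance, I would redo its argument directly for the averaged kernel. One option is to compare with $\lambda$ through the common kernel $\widehat{\Psi}_J$ acting on the enlarged window $J\cup N(J)$, and then charge the excess entropy, via the chain rule and Lemma~\ref{lem:relative-entropy:bound}, only to the sites $k\in\partial^-N(J)$, each contributing at most $\abs{N}\log(\sfrac{1}{q_\smallest})$ with weight $1/\abs{J}$.

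With both bounds in hand, the mixture inequality gives
\begin{equation}
	\Big(\kappa(1-\tfrac{1}{\abs{J}})+(1-\kappa)\Big)D_J(\nu\relto\lambda) = \Big(1-\tfrac{\kappa}{\abs{J}}\Big)D_J(\nu\relto\lambda)
\end{equation}
plus $(1-\kappa)$ times the boundary term. This is the one-step inequality, and the proposition follows.
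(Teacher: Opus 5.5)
Your argument is the paper's proof step for step. You drop $D_J(\mu^t\widehat{\Phi}_J\relto\mu^t)\geq 0$ in Proposition~\ref{prop:IPS:entropy-change:in-terms-of-asynchronous}, split $\widehat{\Phi}_J$ via Observation~\ref{obs:IPS:noise-decomposition}, use convexity, and apply Lemmas~\ref{lem:IPS:noise:contraction} and~\ref{lem:IPS:entropy-diffusion}; the paper does exactly this. The worry you flag is legitimate, and it applies equally to the paper, which invokes Lemma~\ref{lem:IPS:entropy-diffusion} the same way. That lemma's proof uses the weak data processing inequality site by site, which requires $(\lambda\widehat{\Psi}_k)_J=\lambda_J$ for each $k\in J$. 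Lemma~\ref{lem:IPS:decomposition:invariance} together with Observation~\ref{obs:IPS:generator-in-terms-of-asynchronous} only gives $(\lambda\widehat{\Psi}_J)_J=\lambda_J$, so your parenthetical \emph{equivalently $\widehat{\Psi}_J$} is not right. The difference is real: if you mix the rule of Example~\ref{exp:IPS:stationary-but-not-locally} with uniform noise, the $\psi$ of the decomposition is that rule, and a single update at $k$ correlates sites $k$ and $k+1$ under $\lambda$.

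Your proposed repair, however, does not work as described. The weak data processing inequality for the common kernel $\widehat{\Psi}_J$ on the window $N(J)$ only yields $D_J(\mu\widehat{\Psi}_J\relto\lambda)\leq D_{N(J)}(\mu\relto\lambda)$. The excess $D_{N(J)}(\mu\relto\lambda)-D_J(\mu\relto\lambda)$ can be as large as $\abs{N(J)\setminus J}\log(\sfrac{1}{q_\smallest})$, with no factor $1/\abs{J}$. After multiplying by $\abs{J}$ you would get a boundary term of order $\abs{J}\cdot\abs{N(J)\setminus J}$ rather than $\abs{\partial^-N(J)}$, and nothing in that single inequality lets you assign the excess to individual sites with weight $1/\abs{J}$. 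The missing idea is to keep the choice of the updated site inside the comparison. Since $\lambda_J=(\lambda\widehat{\Psi}_J)_J=\frac{1}{\abs{J}}\sum_{k\in J}(\lambda\widehat{\Psi}_k)_J$, joint convexity of relative entropy gives
\[
	D_J(\mu\widehat{\Psi}_J\relto\lambda)
	\leq \frac{1}{\abs{J}}\sum_{k\in J}D_J\big(\mu\widehat{\Psi}_k\relto[\big]\lambda\widehat{\Psi}_k\big)
	\leq \frac{1}{\abs{J}}\sum_{k\in J}D_{J\cup N(k)}(\mu\relto\lambda) \;.
\]
The second step is the weak data processing inequality for the single-site update, with its own reference measure $\lambda\widehat{\Psi}_k$. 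Equivalently, it is data processing applied to the input $(K,X_{J\cup N(K)})$ with $K$ uniform on $J$. This uses only the averaged invariance. From here the rest of the proof of Lemma~\ref{lem:IPS:entropy-diffusion} goes through unchanged, namely the chain rule and Lemma~\ref{lem:relative-entropy:bound} for $k\in\partial^-N(J)$. That gives your one-step inequality and hence the proposition.
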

\begin{proof}
	Consider the noisy representation
	\begin{equation}
		\varphi(a_N,b) = \kappa q(b) + (1-\kappa)\psi(a_N,b)
	\end{equation}
	as in Section~\ref{sec:IPS:noise-decomposition}.  As before, let $(\widehat{\Psi}_J)_{J\Subset\ZZ^d}$ and $(\Theta_J)_{J\Subset\ZZ^d}$ be, respectively, the families of asynchronous updating kernels associated with $\psi$ and $q$.
	We have
	\begin{align}
		D_J\big(\mu^t\widehat{\Phi}_J \relto[\big] \lambda \big) &=
			D_J\Big(\mu^t\big(\kappa\Theta_J + (1-\kappa)\widehat{\Psi}_J\big) \relto[\Big] \lambda \Big)
			&& \text{(Observation~\ref{obs:IPS:noise-decomposition})} \\
		&\leq
			\kappa D_J\big(\mu^t\Theta_J \relto[\big] \lambda \big)
			+
			(1-\kappa) D_J\big(\mu^t\widehat{\Psi}_J \relto[\big] \lambda \big)
			&& \text{(convexity)} \\
		&\leq
			\begin{multlined}[t]
			\kappa\Big(1-\frac{1}{\abs{J}}\Big) D_J(\mu^t\relto\lambda)
			\\ +
			(1-\kappa)\bigg(
				D_J(\mu^t\relto\lambda) + \abs{N}\log(\sfrac{1}{q_\smallest})\cdot\frac{\abs{\partial^- N(J)}}{\abs{J}}
			\bigg) \;.
			\end{multlined}
			&& \text{(Lemmas~\ref{lem:IPS:entropy-diffusion} and~\ref{lem:IPS:noise:contraction})}
	\end{align}
	Therefore,
	\begin{align}
		\dot{D}_J(t) &=
			\abs{J}\Big(
				D_J\big(\mu^t\widehat{\Phi}_J \relto[\big] \lambda \big)
				-
				D_J\big(\mu^t\widehat{\Phi}_J \relto[\big] \mu^t \big)
				-
				D_J(\mu^t \relto \lambda)
			\Big)
			&& \text{(Proposition~\ref{prop:IPS:entropy-change:in-terms-of-asynchronous})} \\
		&\leq
			\abs{J}\Big(
				D_J\big(\mu^t\widehat{\Phi}_J \relto[\big] \lambda \big)
				-
				D_J(\mu^t \relto \lambda)
			\Big)
			&& \text{($D_J\big(\mu^t\widehat{\Phi}_J \relto[\big] \mu^t \big)\geq 0$)} \\
		&\leq
			-\kappa D_{J}(\mu\relto\lambda) + (1-\kappa)\abs{N}\log(\sfrac{1}{q_\smallest})\cdot\abs{\partial^- N(J)} \;,
	\end{align}
	for every $t>0$.
	This proves the proposition.
\end{proof}

The latter proposition implies that, except for a boundary term, the relative entropy of a finite region exponential decays.
\begin{proposition}[Evolution of entropy]
\label{prop:IPS:relative-entropy:evolution}
	Let $\Phi$ be an IPS with a strictly positive local transition rule $\varphi\colon\Sigma^N\times\Sigma\to(0,1)$, and suppose that $\Phi$ admits a stationary Bernoulli measure $\lambda=\lambda_q$.
%	Let $\mu$ be a probability measure and $J\Subset\ZZ^d$.
%	Let $\mu^t\isdef\mu\Phi^t$ and $D_J(t)\isdef D_J(\mu^t\relto\lambda)$.
	Then,
	\begin{equation}
		D_J(\mu\Phi^t\relto\lambda) \leq
			D_J(\mu\Phi^t\relto\lambda)\ee^{-\kappa t}
			+
			\frac{1-\kappa}{\kappa}\abs{N}\log(\sfrac{1}{q_\smallest})\cdot\abs{\partial^- N(J)}
%		&\leq
%			\log(\sfrac{1}{q_\smallest})\cdot\abs{J}\cdot\ee^{-\kappa t}
%			+
%			\frac{1-\kappa}{\kappa}\abs{N}\log(\sfrac{1}{q_\smallest})\cdot\abs{\partial^- N(J)} 
	\end{equation}
	for every probability measure~$\mu$, every finite set $J\subseteq\ZZ^d$, and every $t\geq 0$, where
	$\kappa \isdef \min\big\{\varphi(a_N,b)/q(b): a_N\in\Sigma^N,b\in\Sigma\big\}$
	and
	$q_\smallest\isdef\min\{q(a): a\in\Sigma\}$.
\end{proposition}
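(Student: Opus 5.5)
The statement presumably has a typo: the first term on the right should be $D_J(\mu\relto\lambda)\ee^{-\kappa t}$, the initial relative entropy. I will prove that version. The plan is a Grönwall-type integration of the differential inequality in Proposition~\ref{prop:IPS:relative-entropy:derivative}.

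Set $D_J(t)\isdef D_J(\mu\Phi^t\relto\lambda)$ and $c\isdef(1-\kappa)\abs{N}\log(\sfrac{1}{q_\smallest})\cdot\abs{\partial^-N(J)}$. Proposition~\ref{prop:IPS:relative-entropy:derivative} then reads
\begin{equation}
	\dot D_J(t)\leq -\kappa D_J(t)+c \qquad \text{for all } t>0 \;.
\end{equation}
Consider $g(t)\isdef \ee^{\kappa t}\big(D_J(t)-c/\kappa\big)$. For $t>0$ it is differentiable, with
\begin{equation}
	\dot g(t)=\ee^{\kappa t}\big(\dot D_J(t)+\kappa D_J(t)-c\big)\leq 0 \;.
\end{equation}
Hence $g$ is nonincreasing on $(0,\infty)$, and for $0<s\leq t$,
\begin{equation}
	D_J(t)\leq \frac{c}{\kappa}+\ee^{-\kappa(t-s)}\Big(D_J(s)-\frac{c}{\kappa}\Big)\leq \ee^{-\kappa(t-s)}D_J(s)+\frac{c}{\kappa} \;,
\end{equation}
where the last step uses $c\geq 0$ to drop the term $-\ee^{-\kappa(t-s)}c/\kappa$.

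The only delicate point is the endpoint $t=0$. Differentiability is guaranteed only for $t>0$, because $\mu$ itself may charge some cylinder with zero mass. I would let $s\downarrow 0$ and use continuity of $s\mapsto D_J(s)$ at $0$. This continuity holds because $\mu\Phi^s\to\mu$ weakly as $s\downarrow 0$. Each marginal $(\mu\Phi^s)_J$ then converges to $\mu_J$ on the finite set $\Sigma^J$. Relative entropy against the fully supported $\lambda_J$ is a continuous function of the first argument there, since it is a finite sum of $x\log(x/c)$ terms with $c>0$ under the convention $0\log 0=0$. Letting $s\downarrow 0$ gives $D_J(t)\leq \ee^{-\kappa t}D_J(0)+c/\kappa$ for $t>0$. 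The case $t=0$ is trivial, since then the bound reads $D_J(0)\le D_J(0)+c/\kappa$.

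Substituting $c/\kappa=\frac{1-\kappa}{\kappa}\abs{N}\log(\sfrac{1}{q_\smallest})\cdot\abs{\partial^-N(J)}$ yields exactly the claimed bound. If $\kappa=1$, then $c=0$, and the argument gives pure exponential decay, which is consistent with the claim.
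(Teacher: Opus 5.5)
Your proof is correct and is essentially the paper's argument: multiply the inequality $\dot D_J(t)+\kappa D_J(t)\leq c$ from Proposition~\ref{prop:IPS:relative-entropy:derivative} by $\ee^{\kappa t}$ and integrate. Your continuity argument at $t=0$ supplies a detail the paper leaves implicit, and you are right that the first term should read $D_J(\mu\relto\lambda)\ee^{-\kappa t}$, which is the form used in the proof of Theorem~\ref{thm:main:IPS:ergodicity}.
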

\begin{proof} %\comment{Skip or move to an appendix}
	Let $\mu$ be a probability measure and $J\Subset\ZZ^d$.
	Let $\mu^t\isdef\mu\Phi^t$ and $D_J(t)\isdef D_J(\mu^t\relto\lambda)$.
	According to Proposition~\ref{prop:IPS:relative-entropy:derivative},
	\begin{equation}
		\dot{D}_J(t) + \kappa D_{J}(t) \leq
			 (1-\kappa)\abs{N}\log(\sfrac{1}{q_\smallest})\cdot\abs{\partial^- N(J)}
	\end{equation}
	for $t>0$.
	Multiplying by $\ee^{\kappa t}$ and integrating with respect to~$t$ yields the result.
\end{proof}

\subsection{Propagation of influence}
\label{sec:IPS:influence-propagation}

Let $A\Subset\ZZ^d$ be a finite set of sites and $t\geq 0$.  In a PCA, the state $X^t_A$ of $A$ at time~$t$ can depend on the initial configuration $X^0$ only through its restriction~$X^0_{N^t(A)}$ to $N^t(A)$, where $N$ denotes the dependence neighbourhood of the local rule.  In other words, there is no communication from outside $N^t(A)$ to $A$ in $t$ steps.  By contrast, in an IPS, the set of sites whose states at time~$0$ may influence the state of~$A$ at time~$t$ is random and unbounded, due to its dependence on the family of Poisson clocks~$(\xi_k)_{k\in\ZZ^d}$.  Let us denote this random set by~$\Xi^t(A)$.  The purpose of this section is to provide a simple concentration inequality for~$\Xi^t(A)$, showing that, for $\ell>1$, the probability that $\Xi^t(A)$ exceeds $N^{\ell t}(A)$ is exponentially small in~$t\ell\log\ell$.

The set $\Xi^t(A)$ is recursively defined as follows.
\begin{itemize}
    \item If none of the Poisson clocks $\xi_k$ (for $k\in A$) ticks during the time interval $[0,t]$, we let $\Xi^t(A)\isdef A$.
    \item Otherwise, we let $s$ denote the time of the last tick of the Poisson clocks in $A$ during $[0,t]$ and recursively set $\Xi^t(A)\isdef\Xi^{s-}\big(A\cup N(k)\big)$, where $k$ is the index of the clock that has ticked at time~$s$.
\end{itemize}
It is standard to show that this recursion ends almost surely after finitely many steps and thus, $\Xi^t(A)$ is well defined.
\begin{observation}[Influence region]
\label{obs:influence:markov}
    Let $(X^t)_{t\geq 0}$ be a random trajectory of an IPS with dependence neighbourhood~$N$.
    Then, for every $A\Subset\ZZ^d$ and $t\geq 0$, we have
    \begin{equation}
        \PP\big(X^t_A\in\cdot\given X^0, (\xi_k)_{k\in\ZZ^d}\big)=\PP\big(X^t_A\in\cdot\given[\big] X^0_{\Xi^t(A)}, (\xi_k)_{k\in\ZZ^d}\big)
    \end{equation}
    almost surely.
\end{observation}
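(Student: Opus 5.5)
The plan is a graphical (Harris) construction in which the trajectory $(X^t)$ is a deterministic function of $X^0$, the clocks $(\xi_k)_{k\in\ZZ^d}$, and an independent family of update randomness. Concretely, attach to every tick $\tau$ of every clock $\xi_k$ an independent uniform variable $U_{k,\tau}\in[0,1]$. At that tick, set the new symbol at $k$ to $g\big((\sigma^k X^{\tau-})_N,U_{k,\tau}\big)$, where $g(a_N,\cdot)$ is a fixed quantile map pushing the uniform law onto $\varphi(a_N,\cdot)$. This construction realizes the IPS. Conditioned on the clocks, the $U$'s remain i.i.d.\ uniform and independent of $X^0$. So it suffices to show that, on a full-measure set of clock realizations, $X^t_A$ is a measurable function $F\big(X^0_{\Xi^t(A)},(U_{k,\tau})\big)$ for a map $F$ determined by the clocks. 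Granting this, conditioning on $X^0$ and the clocks leaves the $U$'s with their law, which depends on $X^0$ only through $X^0_{\Xi^t(A)}$. That is the claimed identity.

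The functional dependence is proved by induction along the same recursion that defines $\Xi^t(A)$. I would induct on the number of clock ticks in $\bigcup_j$ (current set) during the relevant interval; this number is almost surely finite, since the recursion terminates almost surely, as the paper notes. There are two cases.

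\emph{Base case.} If no clock in $A$ ticks during $[0,t]$, then $X^t_A=X^0_A$ and $\Xi^t(A)=A$.

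\emph{Inductive step.} Otherwise, let $s$ be the last tick in $A$ before $t$, at site $k$. Nothing in $A$ changes during $(s,t]$, so $X^t_A$ is determined by two things. The first is $X^{s-}_{A\setminus\{k\}}$. The second is the new symbol $g\big((\sigma^kX^{s-})_N,U_{k,s}\big)$, which is a function of $X^{s-}_{N(k)}$. Hence $X^t_A$ is a function of $X^{s-}_{A\cup N(k)}$ and $U_{k,s}$. The recursion on $[0,s)$ involves strictly fewer ticks, so the induction hypothesis applies. It expresses $X^{s-}_{A\cup N(k)}$ as a function of $X^0_{\Xi^{s-}(A\cup N(k))}=X^0_{\Xi^t(A)}$ and of $U$'s attached to ticks before $s$. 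Composing the two functions gives the claim.

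The main obstacle is making the inductive quantity precise. The recursion restarts at time $s-$ with a strictly larger set, so a naive induction on the size of $A$ does not work. Instead I would induct on the total number of recursion steps, which is a finite random integer almost surely. One must also note that almost surely no two clocks tick simultaneously and only finitely many ticks occur in any bounded window of a finite set. This makes "last tick" well defined, and the equality $X^t_A=X^{s}_A$ on $(s,t]$ needs no justification beyond the construction. Measurability of the resulting $F$ in the clock configuration is routine.
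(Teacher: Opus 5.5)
Your proposal is correct and matches the reasoning the paper relies on: the paper gives no proof, treating the observation as immediate from the recursive definition of $\Xi^t(A)$, and your construction with update variables plus induction along that same recursion is the natural way to write it out. The induction should be run pathwise for a fixed clock realization, and it needs both forms of the statement, $X^t$ with ticks in $[0,t]$ and $X^{s-}$ with ticks in $[0,s)$. Index the update variables per site by tick order, $U_{k,1},U_{k,2},\ldots$, so that they are genuinely independent of $\big(X^0,(\xi_k)_{k\in\ZZ^d}\big)$, which is what your final conditioning step uses.
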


The construction of $\Xi^t(A)$ can be viewed as a growth process going backwards in time.
The growth model is defined as follows.  There is again a family $(\xi'_k)_{k\in\ZZ^d}$ of independent Poisson clocks with rate~$1$.  At the beginning, a set $A$ of sites are \emph{infected}.  Once the Poisson clock at an infected site $k$ ticks, the entire neighbourhood $N(k)$ of that sites becomes infected.  Thus, if $\Pi^t(A)$ denotes the set of sites that are infected at time~$t$, we have $\Pi^s(A)=\Pi^{s-}(A)\cup N(k)$ when $k\in\Pi^{s-}(A)$ and $s\in\xi'_k$.
It is clear that if we couple the two processes by letting $\xi'_k=t-\xi_k$ for every $k\in\ZZ^d$, then $\Pi^t(A)=\Xi^t(A)$.

Let us extend the notation $N^s(A)$ by allowing $s$ to be a non-negative real number, in which case $N^s(A)\isdef N^{\floor{s}}(A)$.
\begin{lemma}[Concentration of influence/infected region]
\label{lem:influence-growth}
    %Let $\Phi$ be an IPS on $\Sigma^{\ZZ^d}$ with rule neighbourhood $0\in N\Subset\ZZ^d$.
    Let $0\in N\Subset\ZZ^d$, and let $\Pi^t$ be the growth process with neighbourhood~$N$ as described above.
    For every $A\Subset\ZZ^d$, $t\geq 0$, and $\ell>1$, we have
    \begin{equation}
        \PP\big(\Pi^t(A)\not\subseteq N^{\ell t}(A)\big) \leq
            \abs{A}\ee^{-(\ell\log\sfrac{\ell}{\rho} - \ell + 1)t} \;,
            %\abs{A}\ee^{-(\ell\log\ell - (1+\log\rho)\ell + 1)t} \;,
    \end{equation}
    where $\rho\isdef\abs{N}$.
    In particular, for every $t\geq 0$ and $\varepsilon>0$, we have
    \begin{equation}
        \PP\big(\Pi^t(A)\not\subseteq N_{\varepsilon,t}(A)\big) \leq \varepsilon \;,
    \end{equation}
    where $N_{\varepsilon,t}(A)\isdef N^{s(\varepsilon,t,\abs{A})}(A)$ with $s(\varepsilon,t,a)\isdef \max\big\{8\rho t, \log\sfrac{a}{\varepsilon}\big\}$.
\end{lemma}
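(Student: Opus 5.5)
We reduce to a single initial site by a union bound: $\Pi^t(A)=\bigcup_{a\in A}\Pi^t(\{a\})$. This holds because infection spreads monotonically and each infected site's clock acts independently of how it became infected. Likewise $N^{\ell t}(A)=\bigcup_a N^{\ell t}(\{a\})$. It therefore suffices to show
\begin{equation}
  \PP\big(\Pi^t(\{a\})\not\subseteq N^{\ell t}(\{a\})\big)\leq \ee^{-(\ell\log\sfrac{\ell}{\rho}-\ell+1)t}\;.
\end{equation}

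For a single site we use a path (branching) argument. If $\Pi^t(\{a\})$ contains a site outside $N^{m}(\{a\})$, where $m\isdef\floor{\ell t}+1$, then there is a chain of infections $a=k_0,k_1,\ldots,k_m$ with $k_{j}\in N(k_{j-1})$. Each $k_j$ is infected by a tick of the clock at $k_{j-1}$, and these ticks occur at strictly increasing times $0<s_1<s_2<\cdots<s_m\leq t$. We may take $k_j\neq k_{j-1}$, or, if $k_j=k_{j-1}$ is allowed, we simply overcount.

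Fix a sequence $(k_0,\ldots,k_m)$ of neighbours. Its consecutive distinct-or-not sites carry clocks, and we need increasing tick times where the $j$-th tick is at site $k_{j-1}$ after $s_{j-1}$. The probability of this is at most $\PP(\text{a rate-1 Poisson process has}\ge m\text{ points in }[0,t])$. To see this, use the strong Markov property: the waiting time after $s_{j-1}$ for the next tick of clock $k_{j-1}$ is $\mathrm{Exp}(1)$ independent of the past. This requires care when the same clock is reused, but the memoryless property still gives a sum of $m$ i.i.d. exponentials bounded by $t$. There are at most $\rho^m$ such sequences, so the union bound gives
\begin{equation}
  \PP\big(\Pi^t(\{a\})\not\subseteq N^{\ell t}(\{a\})\big)\leq \rho^m\,\PP\big(\dPois(t)\geq m\big)\;.
\end{equation}

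We then apply the Chernoff bound $\PP(\dPois(t)\geq m)\leq \ee^{-t}(\ee t/m)^m$. Substituting $m\geq\ell t$ and noting that the function $m\mapsto \rho^m\ee^{-t}(\ee t/m)^m$ is decreasing once $m\geq \rho t$ yields
\begin{equation}
  \ee^{-t}\Big(\frac{\rho\ee}{\ell}\Big)^{\ell t}=\ee^{-(\ell\log\sfrac{\ell}{\rho}-\ell+1)t}\;.
\end{equation}
When $\ell\le\rho$ the right-hand side is at least $1$ and the bound is trivial. The main obstacle is the bookkeeping of the chain and of the floor, namely checking that $m=\floor{\ell t}+1$ is exceeded correctly.

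For the second claim we need to handle $\log\sfrac{a}{\varepsilon}\ge 8\rho t$ as well. Set $u\isdef s(\varepsilon,t,\abs{A})$ and redo the estimate with $m>u$ in place of $\ell t$. This gives
\begin{equation}
  \abs{A}\,\rho^m\ee^{-t}(\ee t/m)^m\le\abs{A}\,(\ee\rho t/m)^m\;.
\end{equation}
Since $m\ge 8\rho t$ we have $\ee\rho t/m\le \ee/8<\ee^{-1}$, so the bound is at most $\abs{A}\ee^{-m}\le\abs{A}\ee^{-\log(\abs{A}/\varepsilon)}=\varepsilon$.
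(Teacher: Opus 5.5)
Your proposal is correct and follows essentially the paper's argument: a union bound over infection paths, the greedy first-tick times forming a sum of i.i.d.\ exponentials and hence a Poisson tail, then Chernoff. Your truncation to length-$m$ prefixes replaces the paper's sum over all lengths $n>\ell t$ and its rewriting as $\abs{A}\ee^{(\rho-1)t}\PP(\dPois(\rho t)>\ell t)$, and you treat $\ell\le\rho$ explicitly where the paper leaves it implicit. One slip to fix: the hypothesis should be that $\Pi^t(\{a\})$ contains a site outside $N^{m-1}(\{a\})=N^{\ell t}(\{a\})$, not $N^{m}(\{a\})$. Since $0\in N$ makes the sets $N^k$ nested, any infection path to such a site has length at least $m$, and its length-$m$ prefix gives your chain.
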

\begin{proof}
    Consider the directed graph with vertex set $\ZZ^d$ in which there is a edge from $i$ to $j$ if $j\in N(i)\setminus\{i\}$.
    In order to have $\Pi^t(A)\not\subseteq N^{\ell t}(A)$, there must exist an infection path from $A$ to $\ZZ^d\setminus N^{\ell t}(A)$ in the time interval~$[0,t]$, that is, a path $u_0\to u_1\to\cdots\to u_n$ for which there exists a sequence $s_1,s_2,\ldots,s_n\in\RR$ with
    \begin{itemize}
        \item $0<s_1<s_2<\cdots<s_n\leq t$,
        \item $s_i\in\xi'_{u_{i-1}}$ for $i=1,2,\ldots,n$.
    \end{itemize}

    For $n\in\NN$, let $Q_n$ denote the set of all paths $\underline{u}=u_0\to u_1\to\cdots\to u_n$ from $A$ to $\ZZ^d\setminus N^{\ell t}(A)$.
    Note that any such path must be longer than $\ell t$.  %, that is, $n>\ell t$.
    Given such a path, let
    \begin{align}
        S_0 &\isdef 0 \;, \\
        S_i &\isdef \inf\big\{s: \text{$s>S_{i-1}$ and $s\in\xi'_{u_{i-1}}$}\big\} \;,  \qquad\text{for $i=1,2,\ldots,n$.}
    \end{align}
    In order for $\underline{u}$ to be an infection path, we must have $S_n<t$.  The inter-arrival times $S_i-S_{i-1}$ (for $i=1,2,\ldots,n$) are \ac{iid} exponential random variables with rate~$1$, hence
    \begin{equation}
        \PP(S_n<t) = \PP(\dPois(t)\geq n) \;.
    \end{equation}

    By the union bound, we have
    \begin{align}
        \PP\big(\Pi^t(A)\not\subseteq N^{\ell t}(A)\big)
        &\leq
            \sum_{n>\ell t}\sum_{\underline{u}\in Q_n}\PP(\dPois(t)\geq n) \\
        &=
            \sum_{n>\ell t}\abs{Q_n}\sum_{m\geq n}\PP(\dPois(t)=m) \\
        &=
            \sum_{m>\ell t}\PP(\dPois(t)=m)\sum_{~~~~\mathclap{n: \ell t<n\leq m}~~~~}\abs{Q_n} \\
        &\leq
            \sum_{m>\ell t}\ee^{-t}\frac{t^m}{m!}\times\abs{A}\rho^m \\
        &=
            \abs{A}\ee^{(\rho-1)t}\sum_{m>\ell t}\ee^{-\rho t}\frac{(\rho t)^m}{m!} \\
        &=
            \abs{A}\ee^{(\rho-1)t}\,\PP\big(\dPois(\rho t)>\ell t\big) \;.
    \end{align}
    For a Poisson distribution, the Chernoff bound gives
    \begin{equation}
        \PP(\dPois(\mu)\geq a) \leq
            \ee^{-a\log\sfrac{a}{\mu}+a-\mu} \;, \qquad\text{when $a>\mu$.}
    \end{equation}
    It follows that
    \begin{equation}
        \PP\big(\Pi^t(A)\not\subseteq N^{\ell t}(A)\big) \leq
            \abs{A}\ee^{-(\ell\log\sfrac{\ell}{\rho} - \ell + 1)t} \;,
    \end{equation}
    as claimed.

    To verify the second claim, let $\ell\isdef s(\varepsilon,t,\abs{A})/t$.
    From $s(\varepsilon,t,\abs{A})\geq 8\rho t$, we get
    \begin{equation}
        \ell\log\sfrac{\ell}{\rho} - \ell + 1 \geq \ell(\log 8 -1)+1 \geq \ell
    \end{equation}
    From $s(\varepsilon,t,\abs{A})\geq\log\sfrac{\abs{A}}{\varepsilon}$, it follows
    \begin{equation}
        \abs{A}\ee^{-(\ell\log\sfrac{\ell}{\rho} - \ell + 1)t} \leq
            \abs{A}\ee^{-\ell t} \leq \varepsilon \;,
    \end{equation}
    as claimed.
\end{proof}

\subsection{Bootstrapping} %{Continuous-time bootstrap lemma}
\label{sec:bootstrap:continuous-time}

% According to Proposition~\ref{prop:IPS:relative-entropy:almost-depletion},
% in the setting of Theorem~\ref{thm:main:IPS:ergodicity}, we have
% \begin{equation}
% 	D_J(\mu\Phi^t\relto\lambda) \leq \alpha\abs{\partial^- N(J)}
% 	\qquad\textup{for all $t\geq\beta\log\frac{\abs{J}}{\abs{\partial^- N(J)}}$,}
% \end{equation}
% uniformly in~$\mu$, for some constants $\alpha,\beta>0$.  In this section, we show that the above condition is enough to ensure $D_J(\mu\Phi^t\relto\lambda)$ exponentially decays to~$0$ as $t\to\infty$, as claimed in Theorem~\ref{thm:main:IPS:ergodicity}.
% \comment{This paragraph mentions Proposition~\ref{prop:IPS:relative-entropy:almost-depletion}, but the proof below directly uses Proposition~\ref{prop:IPS:relative-entropy:evolution}.}

Suppose $\Phi$ is a positive-rate IPS admitting a stationary Bernoulli measure~$\lambda$.
According to Proposition~\ref{prop:IPS:relative-entropy:evolution}, the relative entropy $D_J(\mu\Phi^t\relto\lambda)$ of a finite region $J\Subset\ZZ^d$ with respect to $\lambda$ decays exponentially up to a residual term of order the size of the boundary of~$J$.  In this section, we use a bootstrapping argument to show that this condition in fact implies that $D_J(\mu\Phi^t\relto\lambda)$ exponentially decays to~$0$ as $t\to\infty$.  This establishes Theorem~\ref{thm:main:IPS:ergodicity}.

Given $A,B\Subset\ZZ^d$, we denote by $\ell(B:A)$ the maximum number of disjoint translations of $A$ that can be packed inside~$B$.

% \begin{lemma}[Bootstrap lemma]
% \label{lem:bootstrap:IPS}
% 	Let $\Phi$ be a IPS on $\Sigma^{\ZZ^d}$ with dependence neighbourhood $0\in N\Subset\ZZ^d$ and suppose that $\Phi$ admits a stationary Bernoulli measure $\lambda$.
% 	Let $\tau,\delta\colon\FinSubset{\ZZ^d}\to[0,\infty)$ be two functions such that, for every $\mu\in\family{P}(\Sigma^{\ZZ^d})$ and $A\Subset\ZZ^d$, we have
% 	\begin{itemize}
% 		\item $D_A(\mu\Phi^t\relto\lambda)\leq\delta(A)$ for every $t\geq\tau(A)$.
% 	\end{itemize}
% 	Then, for every $\varepsilon>0$, $\mu\in\family{P}(\Sigma^{\ZZ^d})$ and $A,B\Subset\ZZ^d$, we have
% 	\begin{itemize}
% 		\item $D_A(\mu\Phi^t\relto\lambda)\leq\dfrac{\delta(B)}{\ell\big(B:N_{\varepsilon,t}(A)\big)} + H(\varepsilon) + \varepsilon\abs{A}\log(\sfrac{1}{q_\smallest})$ for every $t\geq\tau(B)$.
% 	\end{itemize}
% \end{lemma}
\begin{lemma}[Bootstrap lemma]
\label{lem:bootstrap:IPS:v2}
	Let $\Phi$ be an IPS on $\Sigma^{\ZZ^d}$ with dependence neighbourhood $0\in N\Subset\ZZ^d$ and suppose that $\Phi$ admits a stationary Bernoulli measure $\lambda=\lambda_q$.
	Then, for every $0<\varepsilon<\sfrac{1}{2}$, $\mu\in\family{P}(\Sigma^{\ZZ^d})$, $A,B\Subset\ZZ^d$, and $t\geq 0$, we have
	\begin{equation}
		D_A(\mu\Phi^t\relto\lambda)\leq\dfrac{\sup_\nu D_B(\nu\Phi^t\relto\lambda)}{\ell\big(B:N_{\varepsilon,t}(A)\big)} + H(\varepsilon) + \varepsilon\abs{A}\log(\sfrac{1}{q_\smallest}) \;,
	\end{equation}
    where $N_{\varepsilon,t}(A)$ is as in Lemma~\ref{lem:influence-growth} and $q_\smallest\isdef\min\{q(a): a\in\Sigma\}$.
\end{lemma}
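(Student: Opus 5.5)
Set $m\isdef\ell\big(B:N_{\varepsilon,t}(A)\big)$ and choose translations $a_1,\ldots,a_m$ such that the sets $a_i+N_{\varepsilon,t}(A)$ are pairwise disjoint and contained in~$B$. Write $A_i\isdef a_i+A$; these are disjoint, since $A\subseteq N_{\varepsilon,t}(A)$ because $0\in N$. The idea is to build one auxiliary initial measure $\nu$ that copies $\mu$ independently into every window $a_i+N_{\varepsilon,t}(A)$. The marginal of $\nu\Phi^t$ on each $A_i$ will then be close to that of $\mu\Phi^t$ on~$A$, and the copies will be nearly independent. Superadditivity of relative entropy with respect to a product measure will then give $m\,D_A(\mu\Phi^t\relto\lambda)\lesssim D_B(\nu\Phi^t\relto\lambda)$.

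\textbf{Construction.} Let $\nu$ be the law of the configuration $Y$ defined as follows. For each $i$, the restriction $Y_{a_i+N_{\varepsilon,t}(A)}$ is the shift by $a_i$ of an independent sample of $\mu_{N_{\varepsilon,t}(A)}$. Off the union of the windows, $Y$ is filled in with $\lambda$, independently of everything else. Run the IPS from $Y$ with Poisson clocks $(\xi_k)$ and update randomness, and let $X^t$ be the resulting configuration. Let $E_i$ be the indicator of the event $\Xi^t(A_i)\not\subseteq a_i+N_{\varepsilon,t}(A)$. By Lemma~\ref{lem:influence-growth}, together with the backward coupling $\Pi^t=\Xi^t$ and translation invariance, we have $\PP(E_i=1)\leq\varepsilon$.

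\textbf{Main step.} This is the step I expect to be the obstacle: controlling the dependence between the blocks and the defect on the events~$E_i$. On $\{E_i=0\}$, Observation~\ref{obs:influence:markov} shows that $X^t_{A_i}$ is a function of $Y$ restricted to the $i$-th window and of the clocks and updates inside the $i$-th backward light cone. I will use the chain rule and Lemma~\ref{lem:relative-entropy:conditioning-on-common-info} in the form
\[
D(X^t_B\relto Z_B)\ \geq\ D(X^t_{\cup A_i}\relto Z_{\cup A_i})\ \geq\ \sum_i D\big((X^t_{A_i}\given X^t_{A_{<i}})\relto Z_{A_i}\big)\ \geq\ \sum_i D(X^t_{A_i}\relto Z_{A_i}),
\]
where the last inequality comes from Observation~\ref{obs:csiszar-identity}, since mutual information is nonnegative. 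This gives superadditivity over the disjoint $A_i$ directly, with no independence needed. The difficulty therefore moves to comparing the law of $X^t_{A_i}$ with $(\mu\Phi^t)_A$.

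Two cases could arise, but I will avoid the second. One option would be to realize $\mu\Phi^t$ on $A$ with the same clocks, starting from an initial configuration that agrees with $Y$ on the $i$-th window shifted back; on $\{E_i=0\}$ both processes give the same $A$-state. Instead I will compare through the joint variable $(X^t_{A_i},E_i)$ and a coupled $(W,E_i)$, where $W$ is the $A$-state of the process run from $\mu$ using the same window data, clocks and updates. Here $E_i$ depends only on the clocks, so it is independent of the initial data. Lemma~\ref{lem:relative-entropy:conditioning-on-common-info} together with the chain rule gives
\[
D(W\relto Z_A)\leq D\big((W,E_i)\relto(Z_A,E_i)\big)\leq H(E_i)+\sum_e \PP(E_i=e)\,D\big((W\given E_i=e)\relto Z_A\big).
\]
On $\{E_i=0\}$ the conditional law of $W$ equals that of $X^t_{A_i}$ shifted. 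On $\{E_i=1\}$ I bound the term by $\abs{A}\log(\sfrac{1}{q_\smallest})$ using Lemma~\ref{lem:relative-entropy:bound}, which contributes at most $\varepsilon\abs{A}\log(\sfrac{1}{q_\smallest})$. The same chain-rule splitting applied to $X^t_{A_i}$ shows that $\PP(E_i=0)\,D\big((X^t_{A_i}\given E_i=0)\relto Z\big)\leq D(X^t_{A_i}\relto Z)+H(E_i)$. Since $x\mapsto H(x)$ is increasing on $[0,\sfrac12]$ and $\PP(E_i=1)\le\varepsilon<\sfrac12$, we have $H(E_i)\leq H(\varepsilon)$.

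Collecting the terms, with the entropy terms possibly duplicated in a way that careful bookkeeping should absorb, each $i$ satisfies
\[
D_A(\mu\Phi^t\relto\lambda)\leq D(X^t_{A_i}\relto Z_{A_i})+H(\varepsilon)+\varepsilon\abs{A}\log(\sfrac{1}{q_\smallest}).
\]
Averaging over $i$ and applying the superadditivity bound gives
\[
D_A(\mu\Phi^t\relto\lambda)\leq \frac{1}{m}D_B(\nu\Phi^t\relto\lambda)+H(\varepsilon)+\varepsilon\abs{A}\log(\sfrac{1}{q_\smallest}).
\]
Bounding $D_B(\nu\Phi^t\relto\lambda)$ by the supremum over all initial measures yields the claim.
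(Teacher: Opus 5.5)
Your argument is correct. It reorganizes the paper's proof along a different decomposition that is somewhat more economical.

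The paper conditions on the whole vector $\vect{W}=(W_1,\ldots,W_\ell)$ of escape indicators. It then uses three independence facts: the $W_i$ are independent, the good blocks are conditionally independent given $\vect{W}$, and $X^t_{A_i}$ given $W_i=\symb{0}$ is independent of $(W_j)_{j\neq i}$. These let it split $D\big((X^t_B\given\vect{W})\relto Z_B\big)$ into per-block terms. Only at the end does it remove the conditioning, globally, via $I(X^t_B:\vect{W})\leq H(\vect{W})\leq\ell H(\varepsilon)$.

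You split first, using superadditivity $D(X^t_B\relto Z_B)\geq\sum_i D(X^t_{A_i}\relto Z_{A_i})$. This holds for any joint law because the reference measure is a product. You then prove a per-block perturbation estimate by conditioning only on $E_i$, paying $H(\varepsilon)$ and $\varepsilon\abs{A}\log(\sfrac{1}{q_\smallest})$ for each block.

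The constants come out identical. What your route buys is that independence across windows is used only to build an initial measure $\nu$ with the correct window marginals. You never need independence of the $E_i$'s or any conditional independence between blocks; the paper's equality in its splitting step is stronger than required, and superadditivity supplies the needed inequality for free. Your separation of $A_i=a_i+A$ from the windows $a_i+N_{\varepsilon,t}(A)$ is also cleaner than the paper's notation, which uses $A_i$ for both.

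One bookkeeping point, since you left it hedged: the $H(E_i)$ in your first display is superfluous, and dropping it is what gives the stated constant. The reference pair $(Z_A,E_i)$ has the same $E_i$-marginal as $(W,E_i)$, and $Z_A$ is independent of $E_i$. So the chain rule gives the exact identity $D\big((W,E_i)\relto(Z_A,E_i)\big)=\sum_e\PP(E_i=e)\,D\big((W\given E_i=e)\relto Z_A\big)$. The only entropy cost is then the one from $I(X^t_{A_i}:E_i)\leq H(E_i)\leq H(\varepsilon)$, and you recover exactly the claimed bound. Keeping both terms would only give $2H(\varepsilon)$.
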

\begin{proof}
	Let $\ell\isdef\ell\big(B:N_{\varepsilon,t}(A)\big)$.
	Let $k_1,k_2,\ldots,k_\ell\in\ZZ^d$ be such that the sets $A_i\isdef k_i+N_{\varepsilon,t}(A)$ are disjoint and contained in~$B$.
	Let $(\xi_k)_{k\in\ZZ^d}$ be a family of independent Poisson clocks with rate~$1$, which we will use to construct the trajectory of $\Phi$.
	For $i=1,2,\ldots,\ell$, let $W_i$ be a Bernoulli random variable indicating whether the influence of $\ZZ^d\setminus N_{\varepsilon,t}(A_i)$ reaches $A_i$ by time~$t$, that is, whether $\Xi_t(A_i)\not\subseteq N_{\varepsilon,t}(A_i)$.
    %Since $W_i$ is determined by the Poisson clocks in $N_{\varepsilon,t}(A_i)$, the random variables
    Clearly, $W_1,W_2,\ldots,W_\ell$ are independent, with $\PP(W_i=\symb{1})\leq \varepsilon$.
	%Let $L\isdef W_1+W_2+\cdots+W_\ell$.
	
    Let $Z$ be a random configuration distributed according to~$\lambda$.
	Given~$\mu$, construct a random configuration $X$, independently of $(\xi_k)_{k\in\ZZ^d}$, by choosing the patterns $X_{N_{\varepsilon,t}(A_i)}$ (for $i=1,2,\ldots,\ell$) independently according to (the corresponding translations of) $\mu_{N_{\varepsilon,t}(A)}$.
    The rest of $X$ can be chosen arbitrarily.
	Construct a trajectory $(X^t)_{t\geq 0}$ of $\Phi$ with initial configuration $X^0=X$ and the Poisson clocks $(\xi_k)_{k\in\ZZ^d}$.
    % For $i=1,2,\ldots,\ell$, also construct a random configuration $Y^i\sim \sigma^{-k_i}\mu$ and a corresponding trajectory $(Y^{i,t})_{t\geq 0}$ with $Y^{i,0}=Y^i$ using the same Poisson clocks $(\xi_k)_{k\in\ZZ^d}$.
    % Note that $X_{N_{\varepsilon,t}(A_i)}$ and $Y^i_{N_{\varepsilon,t}(A_i)}$ have the same distribution.  As a consequence, given $W_i=\symb{0}$, $X^t_{A_i}$ and $Y^{i,t}_{A_i}$ have the same conditional distributions.
	
	Let $\vect{w}\in\{\symb{0},\symb{1}\}^\ell$.  Clearly, given $\vect{W}=\vect{w}$, the patterns $X^t_{A_i}$ with $i\in\{1,2,\ldots,\ell\}$ and $w_i=\symb{0}$ are independent.
	Furthermore, given $W_i=\symb{0}$, the pattern $X^t_{A_i}$ is independent of $(W_j)_{j\neq i}$.
	Therefore,
	\begin{align}
		%D_B(\mu\Phi^t\relto\lambda) &= D(X^t_B\relto Z_B) \\
		\MoveEqLeft
		D\big((X^t_B\given\vect{W}=\vect{w})\relto[\big]Z_B\big) \\
		&\geq
			D\Big(\big(X^t_{\bigcup_{i: w_i=\symb{0}} A_i}\given[\big]\vect{W}=\vect{w}\big)\relto[\Big]Z^t_{\bigcup_{i: w_i=\symb{0}} A_i}\Big)
			&& \text{(Proposition~\ref{prop:WDPI})} \\
		&=
			\sum_{i:w_i=\symb{0}} D\big((X^t_{A_i}\given\vect{W}=\vect{w})\relto[\big] Z_{A_i}\big)
			&& \text{(chain rule + independence)} \\
		&=
			\sum_{i=1}^\ell (1-w_i) D\big((X^t_{A_i}\given W_i=\symb{0})\relto[\big] Z_{A_i}\big)
			&& \text{(independence given $W_i=\symb{0}$)}
    \end{align}
	Averaging over $\vect{w}$, we get
    \begin{equation}
		D\big((X^t_B\given\vect{W})\relto[\big]Z_B\big)
		      \geq
			\sum_{i=1}^\ell\PP(W_i=\symb{0})D\big((X^t_{A_i}\given W_i=\symb{0})\relto[\big] Z_{A_i}\big)
    \end{equation}
    % To bound the $i$th term on the right-hand side, construct a random configuration $Y^i\sim \sigma^{-k_i}\mu$ and a corresponding trajectory $(Y^{i,t})_{t\geq 0}$ with $Y^{i,0}=Y^i$ using the same Poisson clocks $(\xi_k)_{k\in\ZZ^d}$.
    % Note that $X_{N_{\varepsilon,t}(A_i)}$ and $Y^i_{N_{\varepsilon,t}(A_i)}$ have the same distribution.  As a consequence, given $W_i=\symb{0}$, $X^t_{A_i}$ and $Y^{i,t}_{A_i}$ have the same conditional distributions.
    To bound the $i$th term on the right-hand side,
    % construct a random configuration $Y\sim \sigma^{-k_i}\mu$ and a corresponding trajectory $(Y^t)_{t\geq 0}$ with $Y^0=Y$
    construct a random trajectory $(Y^t)_{t\geq 0}$ with $Y^0\sim\sigma^{-k_i}\mu$
    using the same Poisson clocks $(\xi_k)_{k\in\ZZ^d}$.
    Note that $X^0_{N_{\varepsilon,t}(A_i)}$ and $Y^0_{N_{\varepsilon,t}(A_i)}$ have the same distribution.  As a consequence, given $W_i=\symb{0}$, $X^t_{A_i}$ and $Y^t_{A_i}$ have the same conditional distributions.
    Thus,   
	\begin{align}
        \MoveEqLeft
		\PP(W_i=\symb{0})D\big((X^t_{A_i}\given W_i=\symb{0})\relto[\big] Z_{A_i}\big) \\
        &=
			\PP(W_i=\symb{0})D\big((Y^t_{A_i}\given W_i=\symb{0})\relto[\big] Z_{A_i}\big) \\
		&=
            \mathrlap{%
			D\big((Y^t_{A_i}\given W_i)\relto[\big] Z_{A_i}\big)
				- \PP(W_i=\symb{1})D\big((Y^t_{A_i}\given W_i=\symb{1})\relto[\big] Z_{A_i}\big)
            } \\
		&\geq
			D(Y^t_{A_i}\relto Z_{A_i})
			-
			\varepsilon\abs{A}\log(\sfrac{1}{q_\smallest})
			&& \text{(Lemmas~\ref{lem:relative-entropy:conditioning-on-common-info} and~\ref{lem:relative-entropy:bound})} \\
		&=
		      D_A(\mu\Phi^t\relto\lambda) - \varepsilon\abs{A}\log(\sfrac{1}{q_\smallest}) \;.
			&& \text{(construction)}
	\end{align}
    It follows that
    \begin{equation}
		D\big((X^t_B\given\vect{W})\relto[\big]Z_B\big)
        \geq
            \ell D_A(\mu\Phi^t\relto\lambda) - \ell \varepsilon\abs{A}\log(\sfrac{1}{q_\smallest}) \;.
    \end{equation}
    
	Using Observation~\ref{obs:csiszar-identity}, we have
	\begin{align}
		D\big((X^t_B\given\vect{W})\relto[\big]Z_B\big) &=
			D(X^t_B\relto Z_B) + I(X^t_B : \vect{W}) \\
		&\leq
			D(X^t_B\relto Z_B) + H(\vect{W}) \\
		&\leq
			D(X^t_B\relto Z_B) + \ell H(\varepsilon) \;.
	\end{align}
	Therefore,
	\begin{equation}
		D(X^t_B\relto Z_B) \geq
			\ell D_A(\mu\Phi^t\relto\lambda) - \ell \varepsilon\abs{A}\log(\sfrac{1}{q_\smallest}) - \ell H(\varepsilon) \;,
	\end{equation}
	or equivalently,
	\begin{equation}
		D_A(\mu\Phi^t\relto\lambda) \leq
			\frac{1}{\ell}D(X^t_B\relto Z_B) + H(\varepsilon) + \varepsilon\abs{A}\log(\sfrac{1}{q_\smallest}) \;.
	\end{equation}
	The result follows.
\end{proof}

%\subsection{Proof of Theorem~\ref{thm:main:IPS:ergodicity}}

We are now ready to prove the main theorem in continuous time.

\begin{proof}[Proof of Theorem~\ref{thm:main:IPS:ergodicity}]
    Let $r\in\NN$ be such that $N\subseteq\zinterval{-r,r}^d$.
	Let $A\Subset\ZZ^d$ be a finite set with diameter~$n$.  Without loss of generality, we can assume that $A=a+\zinterval{0,n-1}^d$ for some $a\in\ZZ^d$.
	Combining Proposition~\ref{prop:IPS:relative-entropy:evolution} with Lemma~\ref{lem:relative-entropy:bound}, for every $A\Subset\ZZ^d$, we have
	\begin{equation}
		D_A(\mu\Phi^t\relto\lambda) \leq
			\alpha_0\abs{A}\ee^{-\kappa t}
			+
			\beta_0\abs{\partial^- N(A)} 
        \label{eq:IPS:proof:decay-up-to-boundary}
	\end{equation}
	uniformly in $\mu$, where $\alpha_0\isdef\log(\sfrac{1}{q_\smallest})$ and $\beta_0\isdef\frac{1-\kappa}{\kappa}\abs{N}\log(\sfrac{1}{q_\smallest})$.
	In order to eliminate the boundary term~$\beta_0\abs{\partial^- N(A)}$, for every $t\geq 0$, we will choose a set $B_t\Subset\ZZ^d$ and an error margin $\varepsilon_t>0$ and apply the bootstrap lemma (Lemma~\ref{lem:bootstrap:IPS:v2}).

    Let $R_{\varepsilon,t,n}\isdef r s(\varepsilon,t,n^d)$, where $s(\varepsilon,t,n^d)=s(\varepsilon,t,\abs{A})$ is as in Lemma~\ref{lem:influence-growth}, and note that
    \begin{equation}
        N_{\varepsilon,t}(A)=N^{s(\varepsilon,t,\abs{A})}(A)
            \subseteq A+\zinterval{-R_{\varepsilon,t,n},R_{\varepsilon,t,n}}^d \;.
    \end{equation}
	We choose $B_t\isdef\zinterval{0,m_t(n+2R_{\varepsilon,t,n})-1}^d$ for some $m_t\in\NN$, which will be chosen later.  Observe that $\ell\big(B_t:N_{\varepsilon_t,t}(A)\big)=m_t^d$.  Furthermore, for every $k\in\NN$, we have
	$\abs[\big]{\partial^-N(\zinterval{0,k-1}^d)}\leq 2rdk^{d-1}$, hence
	\begin{equation}
		\abs{\partial^-N(B_t)} \leq
			2rd\,m_t^{d-1}(n+2R_{\varepsilon,t,n})^{d-1} \;.
	\end{equation}
	Therefore, for every $t\geq 0$,
	\begin{align}
		D_A(\mu\Phi^t\relto\lambda) &\leq
			\dfrac{\sup_\nu D_{B_t}(\nu\Phi^t\relto\lambda)}{\ell\big(B_t:N_{\varepsilon_t,t}(A)\big)}
				+ H(\varepsilon_t) + \varepsilon_t\abs{A}\log(\sfrac{1}{q_\smallest}) 
			&& \text{(Lemma~\ref{lem:bootstrap:IPS:v2})} \\
		&\leq
			\dfrac{\alpha_0\abs{B_t}\ee^{-\kappa t} + \beta_0\abs{\partial^- N(B_t)}}{\ell\big(B_t:N_{\varepsilon_t,t}(A)\big)}
			+ H(\varepsilon_t) + \varepsilon_t\abs{A}\log(\sfrac{1}{q_\smallest})
			&& \text{(by~\eqref{eq:IPS:proof:decay-up-to-boundary})} \\%\text{(Proposition~\ref{prop:IPS:relative-entropy:evolution})} \\
		&\leq
			%\begin{multlined}[t]
				\dfrac{
				\alpha_0 m_t^d(n+2R_{\varepsilon,t,n})^d\ee^{-\kappa t}
				+
				\beta_0\cdot2rd\,m_t^{d-1}(n+2R_{\varepsilon,t,n})^{d-1}
				}{m_t^d} %\\[1em]
				\mathrlap{%
				+ H(\varepsilon_t) + \varepsilon_t n^d\log(\sfrac{1}{q_\smallest}) 
				}
			%\end{multlined}
			\\
		&=
			\alpha_0 (n+2R_{\varepsilon,t,n})^d\ee^{-\kappa t}
			+
			\beta_0\cdot2rd\frac{(n+2R_{\varepsilon,t,n})^{d-1}}{m_t}
			+
			\mathrlap{%
                H(\varepsilon_t) + \varepsilon_t n^d\log(\sfrac{1}{q_\smallest}) 
			}
	\end{align}
	Now, let us choose $\varepsilon_t\isdef\ee^{-\kappa t}$ and $m_t\isdef\ee^{\kappa t}$, and note that, by Lemma~\ref{lem:influence-growth},
    \begin{equation}
        R_{\varepsilon,t,n} = r s(\varepsilon,t,n^d)
            = r\max\big\{8\rho t, \log(n^d/\varepsilon_t)\big\}
            \leq r(8\rho+\kappa)t + rd\log n \;.
    \end{equation}
	With this choice, for every $0<\beta_1<\kappa$, each term in the above upper bound becomes bounded by a constant multiple of $\ee^{-\beta_1 t}n^d$.
	Hence,
	\begin{equation}
		D_A(\mu\Phi^t\relto\lambda) \leq \alpha_1\ee^{-\beta_1 t} n^d
	\end{equation}
	for an appropriate constant~$\alpha_1>0$.
	Applying Pinsker's inequality (\ac{eg},~\cite[Lemma~11.6.1]{CT2006}) as in the discrete-time case yields
	\begin{equation}
		\norm{\mu\Phi^t-\lambda}_A \leq \alpha\ee^{-\beta t} n^{d/2} \;,
	\end{equation}
	where $\alpha\isdef\sqrt{\alpha_1/2}$ and $\beta\isdef\beta_1/2$.
\end{proof}

\section{Examples and characterizations}
\label{sec:characterizations}

\subsection{PCA with stationary Bernoulli measures}
\label{sec:characterizations:PCA}

The following is perhaps the simplest non-trivial example of a positive-rate PCA with a stationary Bernoulli measure.
\begin{example}[XOR + noise]
\label{exp:xor-with-noise}
    Let $0<\varepsilon<1$.
    Let $\Phi_\varepsilon$ be the one-dimensional PCA with alphabet $\Sigma\isdef\{\symb{0},\symb{1}\}$, dependence neighbourhood $N\isdef\{0,1\}$, and local rule %$\varphi\colon\Sigma^N\times\Sigma\to(0,1)$
    \begin{equation}
        \varphi_\varepsilon(w_0w_1,b) \isdef
            \begin{cases}
                1-\varepsilon   & \text{if $b=w_0+w_1\pmod{2}$,} \\
                \varepsilon     & \text{otherwise.}
            \end{cases}
    \end{equation}
    It is easy to verify that the uniform Bernoulli measure $\lambda_{\sfrac{1}{2}}$ is stationary under~$\Phi_\varepsilon$, hence $\Phi_\varepsilon$ is exponentially ergodic.
    In fact, it can be shown that the convergence under $\Phi_\varepsilon$ is super-exponential~\cite{Vaserstein1969} (see~\cite[Chapter~1]{TVS+1990}).
\end{example}

A class of PCA with stationary Bernoulli measures was introduced by Vasilyev, who also proved their exponential ergodicity under the positive-rate assumption~\cite{Vasilyev1978} (see also~\cite[Chapter~17]{TVS+1990}).
%Vasilyev introduced a family of PCA with stationary Bernoulli measures, for which he proved exponential ergodicity under the positive-rate assumption~\cite{Vasilyev1978} (see also~\cite[Chapter~17]{TVS+1990}).
Here, we present Vasilyev's construction in less generality to illustrate its idea.
\begin{example}[Vasilyev]
    Let $\Phi$ be a one-dimensional PCA with dependence neighbourhood $N=\{\ell,\ell+1,\ldots,r\}$ and a local transition rule $\varphi\colon\Sigma^N\times\Sigma\to[0,1]$ that satisfies the following property.
    Let $q\colon\Sigma\to(0,1)$ be a probability distribution on~$\Sigma$, and suppose that, for every $u=u_{\ell}u_{\ell+1}\cdots u_{r-1}\in\Sigma^{N\setminus\{r\}}$, the stochastic matrix $\varphi_u(a,b)\isdef\varphi(ua,b)$ has $q$ as a stationary distribution, that is, $q\varphi_u=q$.
    Observe that the PCA in Example~\ref{exp:xor-with-noise} is of this form.
    
    The stationarity of $\lambda_q$ can be shown using conditioning.  Vasilyev used an argument based on finite-state Markov chains to show that, if $\varphi$ is strictly positive, the PCA is ergodic with exponential convergence.

    To demonstrate the idea, let us assume that $r=0$, and first focus on how the distribution of the symbol at the origin evolves.
    Let $(X^t)_{t\geq 0}$ be a trajectory of $\Phi$.  Vasilyev observed that, given $(X^t_i: t\geq 0, i<0)$, the random variables $X^0_0, X^1_0, \ldots$ form a time-inhomogeneous Markov chain, in which the transition matrix at time $t$ is given by $\varphi_{X^t_{\zinterval{\ell,-1}}}$.  Since all these transition matrices have $q$ as the unique stationary distribution and their spectral gaps are bounded away from zero, the distribution of $X^t_0$ converges exponentially fast to~$q$.
    A similar argument shows that, for every finite block $\zinterval{i,j}$ of sites, the sequence $(X^t_{\zinterval{i,j}})_{t\geq 0}$ conditioned on $(X^t_k:t\geq 0, k<i)$ is a time-inhomogeneous Markov chain whose distribution converges exponentially fast to~$\bigotimes_{k\in\zinterval{i,j}}q$.
    Taking expectation with respect to $(X^t_k:t\geq 0, k<i)$ now establishes the exponential ergodicity of~$\Phi$.
    
    %The space-time random fields associated with Vasilyev's class of PCA was studied by Mairesse and Marcovici~\cite{MM2014}.
    %\comment{Not accurate:}\irene{That sounds accurate to me, if we also switch left and right in the condition.}
    Restricted to a binary alphabet $\Sigma\isdef\{\symb{0},\symb{1}\}$ and neighbourhood $N=\{-1,0\}$, every PCA with a stationary Bernoulli measure is of this type or its reflection~\cite{BGM1969,MM2014} (see also~\cite{MM2014a} and~\cite[Chapter 16]{TVS+1990}).
\end{example}

Vasilyev's class of PCA can be viewed as stochastic analogues of (deterministic) permutive CA, %~\cite{Hedlund1969}.
whereas the full class of PCA admitting stationary Bernoulli measures may be compared to the much larger class of surjective CA (see \ac{eg},~\cite{Kurka2003,Kari2026}). %~\cite[Chapter~5]{Kurka2003}.
%Another subclass of PCA with stationary Bernoulli measures arises as perturbations of surjective CA with permutation noise.

Another subclass of PCA admitting stationary Bernoulli measures arises from perturbing surjective CA with noise.
\begin{example}[Surjective CA + noise]
\label{exp:surjective-CA+noise}
    A deterministic CA on $\Sigma^{\ZZ^d}$ is specified by a local rule $f\colon\Sigma^N\to\Sigma$, where $N\subseteq\ZZ^d$ is finite.
    From a configuration $x$, the system evolves in one step to a new configuration $T(x)$, where
    \begin{equation}
        T(x)_k \isdef f\big((\sigma^k x)_N\big)\;,    \qquad\text{for $k\in\ZZ^d$.}
    \end{equation}
    A \emph{perturbation} of $T$ with (zero-range) \emph{noise} is a PCA $\Phi$ with local rule $\varphi(w,b)\isdef \theta\big(f(w),b\big)$, where $\theta\colon\Sigma\times\Sigma\to[0,1]$ is a stochastic matrix.  In words, each site is first updated deterministically via~$f$, and then independently resampled according to~$\theta$.  The global transition kernel of the PCA is given by
    \begin{equation}
        \Phi(x,E) \isdef \Theta\big(T(x),E\big) \;,
    \end{equation}
    for a configuration $x$ and a measurable set~$E$,
    where $\Theta$ denotes the product kernel defined by~$\theta$ (see \ac{eg},~\cite{MST2019}).

    If $T$ is surjective, it preserves the uniform Bernoulli measure~$\lambda_u$ on~$\Sigma^{\ZZ^d}$ (see \ac{eg},~\cite{Kari2026}).  Thus, if the noise kernel $\Theta$ preserves $\lambda_u$, so does the perturbed CA $\Phi$.  This is the case precisely when $\theta$ is doubly stochastic.  Marcovici, Sablik, and Taati used an entropy argument to show that, when $\theta$ is moreover strictly positive, the distribution of the system starting from any shift-invariant measure converges to~$\lambda_u$.  The shift-invariance assumption was later removed to establish full exponential ergodicity~\cite{Taati2021}.  The proofs of Theorems~\ref{thm:main:PCA:ergodicity} and~\ref{thm:main:IPS:ergodicity} are inspired by the latter proof.

    Note that the PCA in Example~\ref{exp:xor-with-noise} is a perturbation of a surjective CA with a doubly-stochastic noise.  The local rule of the CA and the noise matrix are the following:
    \begin{equation}
        f(w_0w_1)\isdef (w_0+w_1)\bmod{2} \;, \qquad
            \theta\isdef
                \begin{bmatrix}
                    1-\varepsilon & \varepsilon \\ \varepsilon & 1-\varepsilon
                \end{bmatrix} \;.
    \end{equation}

    More generally, the Bernoulli invariant measures of a given surjective CA have an algorithmic characterization based on conservation laws~\cite{KT2011,KT2015}.
    A Bernoulli measure $\lambda_q$ is stationary for the perturbation~$\Phi$ if $\lambda_q$ is invariant under~$T$ and $q\theta=q$.
    Theorem~\ref{thm:main:PCA:ergodicity} establishes the exponential ergodicity of all such perturbations as long as~$\theta$ is strictly positive.
\end{example}

Determining whether a probability measure $\mu$ is stationary for a PCA $\Phi$ requires verifying infinitely many identities $(\mu\Phi)([w])=\mu([w])$, one for each cylinder~$[w]$.  When $\mu$ is a Bernoulli measure, one may hope for a simpler condition involving only finitely many identities.  For the one-dimensional case, such a characterization was found by Piatetski-Shapiro~\cite[Chapter~16]{TVS+1990}.  He showed that, in order for a full-support Bernoulli or one-step Markov measure to be stationary for a one-dimensional PCA with dependence neighbourhood $N=\{0,1\}$ and alphabet~$\Sigma$, it is sufficient that the identity $(\mu\Phi)([w])=\mu([w])$ holds for all words $w\in\Sigma^*$ of length $\abs{w}\leq\abs{\Sigma}+1$.  The case of one-dimensional PCA with larger neighbourhoods or higher range Markov measures can be reduced to the latter case by dividing the lattice into blocks.

In two and higher dimensions, in contrast, there is  no hope of having a similar finitary characterization,
%as the following theorem illustrates.
as the following two algorithmic problems turn out to be both undecidable: %in any dimension $d\geq 2$:
\begin{enumerate}[label={P\arabic*. }]
    \item Is the Bernoulli measure with a given rational marginal~$q$ stationary for the PCA with a given positive, rational local transition rule~$\varphi$?
    \item Does the PCA with a given positive, rational local transition rule~$\varphi$ admit a stationary Bernoulli measure?
\end{enumerate}
In fact:
\begin{theorem}[Algorithmic indistinguishability of PCA with stationary Bernoulli measures]
\label{thm:PCA:undecidability}
    Let $d\geq 2$.  The following two classes of $d$-dimensional PCA with positive, rational local transition rules are computably inseparable:
    \begin{enumerate}[label={$\Alph*$.}]
        \item[$A$.] Those that admit a stationary Bernoulli measure with uniform marginal.
        \item[$B$.] Those that admit no stationary Bernoulli measures.
    \end{enumerate}
    In other words, there is no algorithm that, given a PCA $\Phi$ with a positive, rational local transition rule, outputs \texttt{\textup{YES}} if $\Phi$ belongs to class~$A$ and \texttt{\textup{NO}} if $\Phi$ belongs to class~$B$.
    If $\Phi$ does not belong to either $A$ or $B$, the algorithm is allowed to output arbitrarily or never halt.
\end{theorem}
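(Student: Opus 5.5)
We will reduce a known computably inseparable pair for two-dimensional tilings, or for surjectivity of cellular automata, to the pair $(A,B)$. The natural source is Kari's theorem: for $d\geq 2$, surjectivity of $d$-dimensional CA is undecidable. The proof actually shows computable inseparability of \emph{reversible} (or at least surjective) CA from CA that are not surjective. More precisely, we use the variant in which the non-surjective instances are not even pre-injective in a strong sense, coming from the inseparability of ``tile sets that admit a periodic tiling'' and ``tile sets that admit no tiling''. We will take as given a computable map $\mathcal{T}\mapsto f_{\mathcal{T}}$ from Wang tile sets (or Turing-machine descriptions) to CA local rules $f\colon\Sigma^N\to\Sigma$ on $\Sigma^{\ZZ^d}$ such that $f$ is surjective (indeed reversible) in one case and non-surjective in the other, with the two cases computably inseparable.

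Given such an $f$, we form the perturbation $\Phi_f$ of Example~\ref{exp:surjective-CA+noise} with doubly stochastic, strictly positive, rational noise, say $\theta=(1-\varepsilon)I+\varepsilon Q_u$ with $Q_u$ the uniform replacement matrix and $\varepsilon=\sfrac12$. The map $f\mapsto\Phi_f$ is computable and yields positive rational local rules.

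If $T_f$ is surjective, it preserves the uniform Bernoulli measure $\lambda_u$. Since $\theta$ is doubly stochastic, $\Theta$ also preserves $\lambda_u$, so $\Phi_f\in A$.

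The converse direction is the main obstacle: if $T_f$ is not surjective, we must show that $\Phi_f$ admits no stationary Bernoulli measure at all, with any marginal $q$. First, by Observation~\ref{obs:PCA:noise-decomposition}, $\Phi_f=\Psi\Theta_{\kappa,q'}$ with $\Psi$ given by $T_f$ followed by residual noise. Lemma~\ref{lem:noise-kernel:injectivity} says that the noise kernel is injective on measures. Hence if $\lambda_q\Phi_f=\lambda_q$, then $\lambda_q T_f=\lambda_q\Theta^{-1}$. Since $\theta$ is invertible and $q\mapsto q\theta$ fixes uniform, $\lambda_q\Theta^{-1}=\lambda_{q\theta^{-1}}$, and this is again a Bernoulli measure when it is a probability measure. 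So $T_f$ would push a full-support measure ($q>0$ because $\varphi>0$) onto a full-support Bernoulli measure. The image of $T_f$ is a closed subshift that is proper when $T_f$ is not surjective, so it misses some cylinder $[w]$. Then $\lambda_q T_f([w])=0$, contradicting full support of $\lambda_{q\theta^{-1}}$. Therefore non-surjective $f$ gives $\Phi_f\in B$.

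Finally, a separating algorithm for $(A,B)$ composed with $f\mapsto\Phi_f$ would separate surjective from non-surjective CA, which is impossible. One detail needs care: ensuring that $\lambda_{q\theta^{-1}}$ really has full support. This follows because it equals $\lambda_q T_f$, which is a genuine measure, and $\lambda_{q\theta^{-1}}$ is a product measure whose marginal $p$ must satisfy $p\theta=q>0$. Full support can fail if $p$ has zeros; in that case we argue instead via $\Phi_f$-image support. Equivalently, we directly note that $\lambda_q=\lambda_q\Phi_f$ must be supported where $\Theta$ applied to $T_f(\text{support})$ lands. The clean route is to pick $\varepsilon$ small and use a counting or entropy argument: a non-surjective CA strictly decreases measure-theoretic entropy relative to uniform on large blocks. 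We expect pinning this down for arbitrary $q$ to be the delicate point. If it resists, we will restrict class $B$'s verification to $q$ near uniform via the entropy bound and handle the remaining $q$ by the support argument.
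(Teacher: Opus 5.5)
Your overall route matches the paper's: perturb the CA with uniform-replacement noise, use injectivity of $\Theta$ to identify $\lambda_q T_f$ as the Bernoulli measure $\lambda_p$ with $p\theta=q$, then use that the image of $T_f$ is closed. But the step you flag as delicate is a genuine gap, and the claim you are trying to prove there is false.

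If the local rule $f\colon\Sigma^N\to\Sigma$ is not onto, $\Phi_f$ can admit a stationary Bernoulli measure even though $T_f$ is not surjective. Take $f\equiv b$ constant. Then $\Phi_f$ resamples every site independently from $\theta(b,\cdot)$, so $\lambda_q$ with $q\isdef\theta(b,\cdot)$ is stationary. It is reached in one step from any initial measure, so it is the unique stationary measure, and it is not uniform. Such a $\Phi_f$ lies in neither $A$ nor $B$, so a separating algorithm may answer arbitrarily on it, and your reduction does not decide surjectivity. Your proposed patches cannot repair this. Membership in $B$ requires ruling out stationary Bernoulli measures with \emph{every} marginal, so restricting to $q$ near uniform, or comparing entropy with the uniform measure, does not suffice; for non-onto $f$ the statement simply fails.

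The missing idea is a trivial pre-filter. Surjectivity of $T_f$ requires $f$ to be onto $\Sigma$, which is decidable by inspecting the finite table of $f$. So you may send non-onto $f$ directly to the answer \emph{not surjective} and apply the reduction only to onto $f$.

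For onto $f$ the zeros of $p$ disappear. First, $q(b)\geq\min\varphi>0$ for every $b$, since $q$ is the single-site marginal of $\lambda_q\Phi_f$. Hence
$p(a)=\lambda_q\big(\{x: f(x_N)=a\}\big)=\sum_{w\in f^{-1}(a)}\prod_{i\in N}q(w_i)>0$.
So $\lambda_p=\lambda_q T_f$ has full support, and your cylinder argument shows $T_f$ is surjective. This is exactly how the paper closes the argument.

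With this fix, plain undecidability of surjectivity (Kari) suffices, because every filtered input is mapped into $A\cup B$. The stronger inseparability of reversible versus non-surjective CA that you invoke is not needed.
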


\begin{proof}
    It is enough to prove the claim for~$d=2$.
    The question of whether a two-dimensional deterministic CA is surjective is known to be algorithmically undecidable~\cite{Kari1994}.  We show that this problem can be reduced to the problem of distinguishing between classes $A$ and $B$ of two-dimensional PCA.

    Let $F$ be a two-dimensional CA with local rule $f\colon\Sigma^N\to\Sigma$, and suppose we wish to determine if $F$ is surjective.  For $F$ to be surjective, $f$ must also be surjective.  Since the surjectivity of $f$ is algorithmically trivial, we can assume, without loss of generality, that $f$ is surjective.

    Let $\Phi$ be the perturbation of $F$ with memoryless noise with error probability $\kappa$ and uniform replacement distribution, where $\kappa$ is an arbitrary rational in $(0,1)$, say, $\kappa\isdef\sfrac{1}{2}$.
    %satisfying $0<\kappa<1/(\abs{\Sigma}+1)$, say, $\kappa\isdef 1/(\abs{\Sigma}+2)$.
    Thus, $\Phi$ is a PCA with local rule $\varphi\colon\Sigma^N\times\Sigma\to[0,1]$ given by $\varphi(w,b)\isdef\theta(f(w),b)$, where
    \begin{equation}
        \theta(a,b) \isdef
        \begin{cases}
            1-\kappa+\sfrac{\kappa}{\abs{\Sigma}}   & \text{if $b=a$,} \\
            \sfrac{\kappa}{\abs{\Sigma}}             & \text{otherwise,}
        \end{cases}
    \end{equation}
    is the transition matrix of the noise.  Note that $\varphi$ has positive, rational values.
    Let $\Theta$ denote the global kernel associated with~$\theta$.

    As discussed in Example~\ref{exp:surjective-CA+noise}, if $F$ is surjective, then the uniform Bernoulli measure is stationary for~$\Phi$.
    Conversely, let us show that, if $\Phi$ admits a stationary Bernoulli measure, $F$ must be surjective.

    The argument is a refinement of the one for Proposition~\ref{prop:PCA:decomposition:invariance}.
    Let $\lambda_q$ be a Bernoulli measure with marginal~$q$, and suppose that $\lambda_q\Phi=\lambda_q$.
    Let $\mu$ denote the image of $\lambda_q$ under $F$.  Then, $\mu\Theta=\lambda_q$.
    Let $p\colon\Sigma\to[0,1]$ denote the single-site marginal of~$\mu$.  Then, $p\theta=q$, hence $\lambda_p\Theta=\lambda_q$.  By Lemma~\ref{lem:noise-kernel:injectivity}, $\Theta$ acts injectively on probability measures, thus $\mu=\lambda_p$.
    Since $f$ is assumed to be surjective, $p(a)>0$ for every $a\in\Sigma$.
    Therefore, $\lambda_p$ has full support, which in turn implies that $F$ is surjective. 
\end{proof}

\subsection{IPS with stationary Bernoulli measures}
\label{sec:characterizations:IPS}

As noted in Observation~\ref{obs:IPS:local-preservation}, in order for a probability measure $\lambda$ to be stationary for an IPS $\Phi$, it is enough that $\lambda\widehat{\Phi}_k=\lambda$ for every $k\in\ZZ^d$.  Let us say that $\lambda$ is \emph{locally stationary} for $\Phi$ if this is the case.

% When $\Phi$ is defined by a local transition rule $\varphi\colon\Sigma^N\times\Sigma\to[0,1]$ and $\lambda=\lambda_p$ is a Bernoulli measure, local stationarity is equivalent to the following property:
% If $X_i$ ($i\in N$) are \ac{iid} random variables with distribution~$p$ and $(X_i)_{i\in N}\markovto[\varphi]Y_0$, then $Y_0$ also has distribution~$p$ and is independent of~$(X_i)_{i\in N\setminus\{0\}}$.

\begin{example}[Asynchronous XOR + noise]
    Consider the one-dimensional IPS with transition rule~$\varphi_\varepsilon$ introduced in Example~\ref{exp:xor-with-noise} and clock rate~$1$.
    The uniform Bernoulli measure $\lambda_{\sfrac{1}{2}}$ is locally stationary.  Indeed, if
    \begin{equation}
        \begin{array}{C{3em}C{3em}C{3em}C{3em}C{3em}C{3em}C{3em}}
            \cdots & X_{k-2} & X_{k-1} & X_k & X_{k+1} & X_{k+2} & \cdots
        \end{array}
    \end{equation}
    are \ac{iid} uniform Bernoulli random variables, so are
    \begin{equation}
        \begin{array}{C{3em}C{3em}C{3em}C{3em}C{3em}C{3em}C{3em}}
            \cdots & X_{k-2} & X_{k-1} & Y_k & X_{k+1} & X_{k+2} & \cdots
        \end{array}
    \end{equation}
    where $Y_k$ is sampled from $\varphi_\varepsilon(X_kX_{k+1},\cdot)$.
    %where $Y_0\isdef (X_0+X_0) \bmod{2}$, and the noise also preserves the uniform Bernoulli distribution.
\end{example}

One may suspect that for a Bernoulli (or Markov) measure, stationarity and local stationarity are equivalent.  The following example refutes this conjecture.

\begin{example}[Stationary but not locally stationary]
\label{exp:IPS:stationary-but-not-locally}
    Let $0<\varepsilon\leq\sfrac{1}{2}$.
    Let $\Phi_\varepsilon$ be the one-dimensional IPS with alphabet $\Sigma\isdef\{\symb{0},\symb{1}\}$, dependence neighbourhood $N\isdef\{-1,0,1\}$, and local rule %$\varphi\colon\Sigma^N\times\Sigma\to(0,1)$
    \begin{equation}
        \varphi_\varepsilon(w_{-1}w_0w_1,\cdot) \isdef
            \varepsilon\delta_{w_1} + \varepsilon\delta_{\overline{w}_{-1}} + (1-2\varepsilon)\delta_{w_0}
    \end{equation}
    and clock rate~$1$,
    where, for $a\in\Sigma$, $\delta_a$ denotes the point mass at $a$, and $\overline{a}$ denotes the opposite of $a$ (\ac{ie}, $\overline{\symb{1}}=\symb{0}$ and $\overline{\symb{0}}=\symb{1}$).
    In words, the system evolves as follows.
    %We have independent Poisson clocks with rate~$1$ at every site.
    Every time the clock at site $k$ ticks, we update the symbol at site~$k$ in the following manner:
    \begin{itemize}
        \item With probability $\varepsilon$, we copy the value at site $k+1$ into site $k$.
        \item With probability $\varepsilon$, we copy the flipped version of the value at site $k-1$ into site~$k$.
        \item With probability $1-2\varepsilon$, we leave the value at site $k$ unchanged.
    \end{itemize}
    We claim that, for this IPS, the uniform Bernoulli measure $\lambda_{\sfrac{1}{2}}$ is stationary but not locally stationary.
    
    That $\lambda_{\sfrac{1}{2}}$ is not locally stationary is clear, because updating the value at site~$k$ introduces dependencies between site $k$ and its two neighbours.
    To see that $\lambda_{\sfrac{1}{2}}$ is stationary, note that, by the virtue of the colouring and superposition properties of the Poisson processes (see \ac{eg},~\cite{Kingman1993}), this model has the following equivalent description.
    For each $k$, there is a Poisson clock $\xi'_{k,k+1}$ with rate~$2\varepsilon$ attached to the pair $(k,k+1)$, and all the clocks are independent.  Whenever the clock at $(k,k+1)$ ticks, we flip a fair coin and
    \begin{itemize}
        \item With probability $\sfrac{1}{2}$, we copy the value at site $k+1$ into site~$k$.
        \item With probability $\sfrac{1}{2}$, we copy the flipped version of the value at site $k$ into site~$k+1$.
    \end{itemize}
    It is easy to verify that, if the values at $k$ and $k+1$ are independent uniform Bernoulli, then after the latter updating step, their joint distribution remains the same.
\end{example}

% \comment{To add:
% \begin{itemize}
%     \item Conjecture that IPS with stationary Bernoulli measure still have a finitary characterization.
%     \item Result of Fredes and Marckert~\cite{FM2020}.
% \end{itemize}
% }

\begin{question}
    Do the (positive-rate) IPS that admit stationary Bernoulli measures have a finitary characterization in every dimension?
\end{question}

In one dimension, this question was resolved by Fredes and Marckert~\cite{FM2020}, who provided finitary conditions for the stationarity of Bernoulli and full-support Markov measures for a more general class of finite-range IPS in which multiple sites are allowed to be updated simultaneously.

% \comment{To be written!}

% Theorem~\ref{thm:PCA:undecidability} shows that the positive-rate PCA that admit stationary Bernoulli measures form a rich family of models.
% In contrast, the positive-rate IPS that admit stationary Bernoulli measures seem to have a simple characterization.

% Let $\Phi$ be an IPS with a local transition rule $\varphi\colon\Sigma^N\times\Sigma\to[0,1]$ and let $\widehat{\Phi}_k$ be the kernel for updating the symbol at position~$k$ using~$\varphi$.
% We say that $\Phi$ \emph{locally preserves} a probability measure $\lambda$ if $\lambda\widehat{\Phi}_k=\lambda$ for every $k\in\ZZ^d$.  Clearly, this is equivalent to the single condition $\lambda\widehat{\Phi}_0=\lambda$ as long as $\lambda$ is shift-invariant.  By Observation~\ref{obs:IPS:local-preservation}, local preservation implies preservation.

% \begin{question}	
% 	Characterize the positive-rate IPS that admit stationary Bernoulli measures.
% 	Is it true that a positive-rate IPS preserves a stationary Bernoulli measure if and only if it does so locally?
% \end{question}

% In the case of binary alphabet, a necessary and sufficient condition is given by Stavskaya (see Toom et al.~\cite[Chapter~17]{TVS+1990}).

% References ***************************************
\addcontentsline{toc}{section}{References}
\bibliographystyle{plainurl}
\bibliography{bibliography}
% --------------------------------------------------

\bigskip\bigskip
\Addresses

\end{document}